\documentclass[final]{siamltex}
\usepackage{}
\usepackage{graphicx}
\usepackage{multirow,bigstrut}
\usepackage{amsmath,amsfonts,amssymb}
\usepackage{mathrsfs}
\usepackage{color}
\usepackage{upgreek}
\usepackage{bm}
\usepackage{verbatim}

\usepackage{tikz}
\usetikzlibrary{shapes,arrows}

\newtheorem{rem}{\hspace{1mm}Remark}[section]
\newtheorem{exam}{\hspace{1mm}Example}[section]

\newcommand{\beq}{\begin{equation}}
\newcommand{\eeq}{\end{equation}}
\newcommand{\bey}{\begin{eqnarray}}
\newcommand{\eey}{\end{eqnarray}}

\title{Multiscale Approach and The Convergence for the time-dependent Maxwell-Schr\"{o}dinger System in
Heterogeneous Nanostructures
\thanks{{This work is supported by National Natural Science Foundation
of China (grant 11571353, 91330202),  and
Project supported by the Funds for Creative Research Group of China
(grant 11321061).}}
}
\author{Liqun Cao\thanks{{Corresponding author. LSEC, NCMIS, Institute of
      Computational Mathematics and Scientific/Engineering Computing,
      Academy of Mathematics and Systems Science, Chinese Academy of
      Sciences, Beijing 100190, China; ({\tt clq@lsec.cc.ac.cn}).}}
\and Chupeng Ma\thanks{{Institute of
      Computational Mathematics and Scientific/Engineering Computing,
      Academy of Mathematics and Systems Science, Chinese Academy of
      Sciences, Beijing 100190, China; ({\tt machupeng@lsec.cc.ac.cn}).}}
  \and Jianlan Luo\thanks{{ School of Physics and Nuclear Energy Engineering, Beijing University
of Aeronautics and Astronautics, Beijing 100191, China; ({\tt
luojianlan@126.com}). }}
\and Lei Zhang\thanks{{Department of Logistics Management, Logistics Academy, Beijing
100858, China; ({\tt zhanglei@lsec.cc.ac.cn}).}}}

\begin{document}

\maketitle

\begin{abstract}
This paper discusses the multiscale approach and the convergence of
the time-dependent Maxwell-Schr\"{o}dinger system with rapidly oscillating
discontinuous coefficients arising from the modeling of a heterogeneous
nanostructure with a periodic microstructure. The homogenization method and the multiscale asymptotic
method for the nonlinear coupled equations are presented. The efficient
numerical algorithms based on the above methods are proposed. Numerical simulations are then carried out to validate
the method presented in this paper.
\end{abstract}

\begin{keywords}
Maxwell-Schr\"{o}dinger system, homogenization,
multiscale asymptotic method, the effective mass approximation,
finite element method.
\end{keywords}

\begin{AMS}
65F10, 35B50
\end{AMS}

\section{Introduction}\label{sec-1}

It is well-known that the classical Maxwell's equations are
widely used in the macroscopic electromagnetic theory. However,
when the size of physical devices reaches the wavelength of electron,
quantum effects become important even dominant and can not be neglected.
To analyze and model such physical devices, coupled numerical simulations of Maxwell and Schr\"{o}dinger equations need to be performed \cite{Pi}.
For example, an array of quantum dots is irradiated by the nearly  electromagnetic field.
The quantum dots, influenced by the incoming electromagnetic field, form in general a superposition of the ground and excited states,
which create charge oscillations and refine the initial electromagnetic field as new source of radiation.
This process leads to the interaction between the quantum dots and the electromagnetic field \cite{Oh}.
Hence the Maxwell equations with the quantum current density can be written as
\begin{equation}\label{eq:1-1}
\begin{array}{@{}l@{}}
{\displaystyle  \nabla \times \mathbf{E}+\frac{\partial \mathbf{B}}{\partial t}=0}\\[2mm]
{\displaystyle \nabla \times \mathbf{H}-\frac{\partial \mathbf{D}}{\partial
t}=\mathbf{J}_s+\mathbf{J}_q}\\[2mm]
{\displaystyle \nabla \cdot \mathbf{D}=\rho}\\[2mm]
{\displaystyle\nabla \cdot \mathbf{B}=0,}
\end{array}
\end{equation}
where
$\mathbf{E}(\mathbf{x},t)$, $\mathbf{B}(\mathbf{x},t)$, $\mathbf{H}(\mathbf{x},t)$,
$\mathbf{D}(\mathbf{x},t)$,
 $\rho(\mathbf{x},t)$, $\mathbf{J}_s(\mathbf{x},t)$, $\mathbf{J}_q(\mathbf{x},t)$ denote
the electric field intensity, the magnetic flux density, the magnetic field intensity, the electric
displacement, the electric charge density,
the source current density and the quantum current density, respectively. They are functions of the space
$\mathbf{x}$ and time $ t $. The quantum current density is derived through the method of quantum mechanics.
In a linear medium, we have
\begin{equation}\label{eq:1-2}
{\displaystyle\mathbf{D}=\eta\mathbf{E},\quad \mathbf{B}=\mu\mathbf{H},}
\end{equation}
where $\eta=(\eta_{ij})$, $\mu=(\mu_{ij}) $ are the electric permittivity and magnetic permeability,
which are $ 3\times 3 $ positive-definite matrix-valued
functions of the position, respectively.


In the study of the interaction of an electron with the incoming electromagnetic field, the time-dependent
Schr\"{o}dinger equation can be written as follows:
\begin{equation}\label{eq:1-3}
\begin{array}{@{}l@{}}
{\displaystyle i\hbar\frac{\partial \Psi(\mathbf{x},t)}{\partial t}=\hat{H}\Psi(\mathbf{x},t),
\quad \hat{H}=\hat{H}_0+\hat{V}(\mathbf{x},t)},
\end{array}
\end{equation}
where  $\hat{H}_0 $ is the effective Hamiltonian operator in a crystal structure with the effective mass approximation (EMA) and
$\hat{V}(\mathbf{x},t)$ is an interaction Hamiltonian with the incoming electromagnetic field. In this paper,
we take the interaction Hamiltonian to be of the form
\begin{equation}\label{eq:1-4}
{\displaystyle \hat{V}(\mathbf{x},t)=-\mathbf{E}(\mathbf{x},t)\cdot\widehat{\bm{\zeta}},}
\end{equation}
where $\widehat{\bm{\zeta}}=-e\mathbf{x}$ is the electric dipole moment operator,  $-e $ is the charge of the electron
and $\mathbf{E}(\mathbf{x},t)\cdot\widehat{\bm{\zeta}} $ denotes the scalar product of $ \mathbf{E}(\mathbf{x},t) $
and $ \widehat{\bm{\zeta}} $. The quantum current density that can be injected into the time-dependent Maxwell's equations is given as
\begin{equation}\label{eq:1-5}
{\displaystyle \mathbf{J}_q(\mathbf{x},t)=\frac{-e\hbar}{2im}\big(\overline{\Psi}\nabla\Psi-
\Psi\nabla\overline{\Psi}\big)},
\end{equation}
where $m$ is the effective mass and $i$ is the imaginary unit, i.e. $i^2 = -1$.
$ \overline{\Psi} $ denotes the complex conjugate of $ \Psi $.
As usual, here we employ the atomic units, i.e. $\hbar=e=1 $.

In this paper, we consider the following time-dependent
Maxwell-Schr\"{o}dinger system with rapidly oscillating discontinuous coefficients given by
\begin{equation}\label{eq:1-6}
\left\{
\begin{array}{@{}l@{}}
{\displaystyle i\frac{\partial \Psi^{\varepsilon}(\mathbf{x},t)}{\partial t}=
-\nabla \cdot \big(A(\frac{\mathbf{x}}{\varepsilon})\nabla
\Psi^\varepsilon(\mathbf{x},t)\big)
+\big(V_c(\frac{\mathbf{x}}{\varepsilon})-\textbf{E}^{\varepsilon}\cdot\widehat{\bm{\zeta}}
+V_{xc}[\rho^\varepsilon]\big)
\Psi^\varepsilon,}\\[2.3mm]
{\displaystyle \qquad \qquad \qquad \qquad (\mathbf{x},t)\in
\Omega\times(0,T),}\\[2.3mm]
{\displaystyle \eta(\frac{\mathbf{x}}{\varepsilon})\frac{\partial \mathbf{E}^\varepsilon(\mathbf{x},t)}{\partial t}
=\mathbf{curl}\/ \mathbf{H}^\varepsilon(\mathbf{x},t)+\mathbf{f}(\mathbf{x},t)
-\mathbf{J}_q^\varepsilon,\,\, \nabla\cdot \mathbf{f}=0,\,\,(\mathbf{x},t)\in
\Omega\times(0,T),}\\[2.3mm]
{\displaystyle \mu(\frac{\mathbf{x}}{\varepsilon})
\frac{\partial \mathbf{H}^\varepsilon(\mathbf{x},t)}{\partial t}=-\mathbf{curl}\/ \mathbf{E}^\varepsilon(\mathbf{x},t)
,\,\, (\mathbf{x},t)\in
\Omega\times(0,T),}\\[2.3mm]
{\displaystyle \nabla\cdot \big(\eta(\frac{\mathbf{x}}{\varepsilon})\mathbf{E}^\varepsilon(\mathbf{x},t)\big)=\rho^\varepsilon(\frac{\mathbf{x}}{\varepsilon},t),
\quad\nabla\cdot \big(\mu(\frac{\mathbf{x}}{\varepsilon})\mathbf{H}^\varepsilon(\mathbf{x},t)\big)=0,
\,\, (\mathbf{x},t)\in
\Omega\times(0,T),}\\[2.3mm]
{\displaystyle \hat{\bm{\zeta}}=-\mathbf{x},\,\, \rho^{\varepsilon}=N{\vert\Psi^{\varepsilon}\vert}^2
,\,\,\mathbf{J}_q^{\varepsilon}=iN\big[\overline{\Psi}^{\varepsilon} A(\frac{\mathbf{x}}{\varepsilon})\nabla\Psi^{\varepsilon}-
\Psi^{\varepsilon}A(\frac{\mathbf{x}}{\varepsilon})\nabla\overline{\Psi}^{\varepsilon}\big].}
\end{array}
\right.
\end{equation}

Let $ \partial \Omega $ be the boundary of $ \Omega $ and  $\mathbf{n}=(n_1,n_2,n_3) $
be the outward unit normal to $ \partial \Omega $. We take (\ref{eq:1-6}) to hold
in $\Omega $ subject to the boundary conditions
\begin{equation}\label{eq:1-7}
{\displaystyle \Psi^\varepsilon(\mathbf{x},t)=0,\quad \mathbf{E}^{\varepsilon}(\mathbf{x},t)\times\mathbf{n}=0, \quad (\mathbf{x},t)\in
\partial \Omega\times(0,T).}
\end{equation}

For initial conditions we take
\begin{equation}\label{eq:1-8}
{\displaystyle \Psi^\varepsilon(\mathbf{x},0)=\Psi_0(\mathbf{x}),\quad
\mathbf{E}^\varepsilon(\mathbf{x},0)=\mathbf{\eta}^{-1}(\frac{\mathbf{x}}{\varepsilon}){\bm\varphi}(\mathbf{x}),\quad
\mathbf{H}^\varepsilon(\mathbf{x},0)=\mathbf{\mu}^{-1}(\frac{\mathbf{x}}{\varepsilon}){\bm\psi}(\mathbf{x}),}
\end{equation}
and
\begin{equation}\label{eq:1-9}
{\displaystyle \nabla\cdot {\bm\varphi}=\nabla\cdot {\bm\rho^\varepsilon}|_{t=0},\quad
\nabla\cdot {\bm\psi}=0,}
\end{equation}
where $ \nabla\cdot $ and $ \nabla $ are the divergence
operator and the gradient operator, respectively.
$\widehat{\bm{\zeta}}$, $\rho^{\varepsilon}$ and $\mathbf{J}_q^{\varepsilon} $ are
the electric dipole moment, the electron density and the quantum current density, respectively.
Here $N$ denotes the number density of electrons.

Suppose that $\Omega \subset \mathbb{R}^3 $ is a
bounded smooth domain or a bounded polyhedral convex domain
with a periodic microstructure as shown in Fig.~\ref{fig1-1}.
$ \varepsilon>0 $ is a small parameter which it
represents the relative size of a periodic cell for the
heterogeneous nanostructures, i.e. $
0<\varepsilon=l_p/L<1 $, where $
l_p$ and $ L $ are respectively the sizes of a periodic cell and a
whole domain $ \Omega $. $A(\frac{\displaystyle
\mathbf{x}}{\displaystyle \varepsilon})=(a_{ij}(\frac{\displaystyle
\mathbf{x}}{\displaystyle \varepsilon}))_{3\times 3} $ is the inverse of the
effective masses tensor of materials. $ V_c(\frac{\displaystyle
\mathbf{x}}{\displaystyle \varepsilon}) $ is the constraint step
potential function. $ V_{xc}(\rho^\varepsilon)
=V_{xc}(\mathbf{x},\frac{\displaystyle \mathbf{x}}{\displaystyle \varepsilon}) $ is the
exchange-correlation potential function. The boundary condition $
\Psi^\varepsilon(\mathbf{x},t)=0 $ on $\partial \Omega $ implies that the
wave function satisfies the tight-binding state condition.
The perfect conductive boundary condition for $  \mathbf{E}^{\varepsilon}(\mathbf{x},t)\times\mathbf{n}=0 $
is imposed on $\partial \Omega$. Here and in the sequel, the Einstein summation
convention is used: summation is taken over repeated indices.

\begin{figure}
\begin{center}
\tiny{(a)}\includegraphics[width=6.1cm,height=5cm]{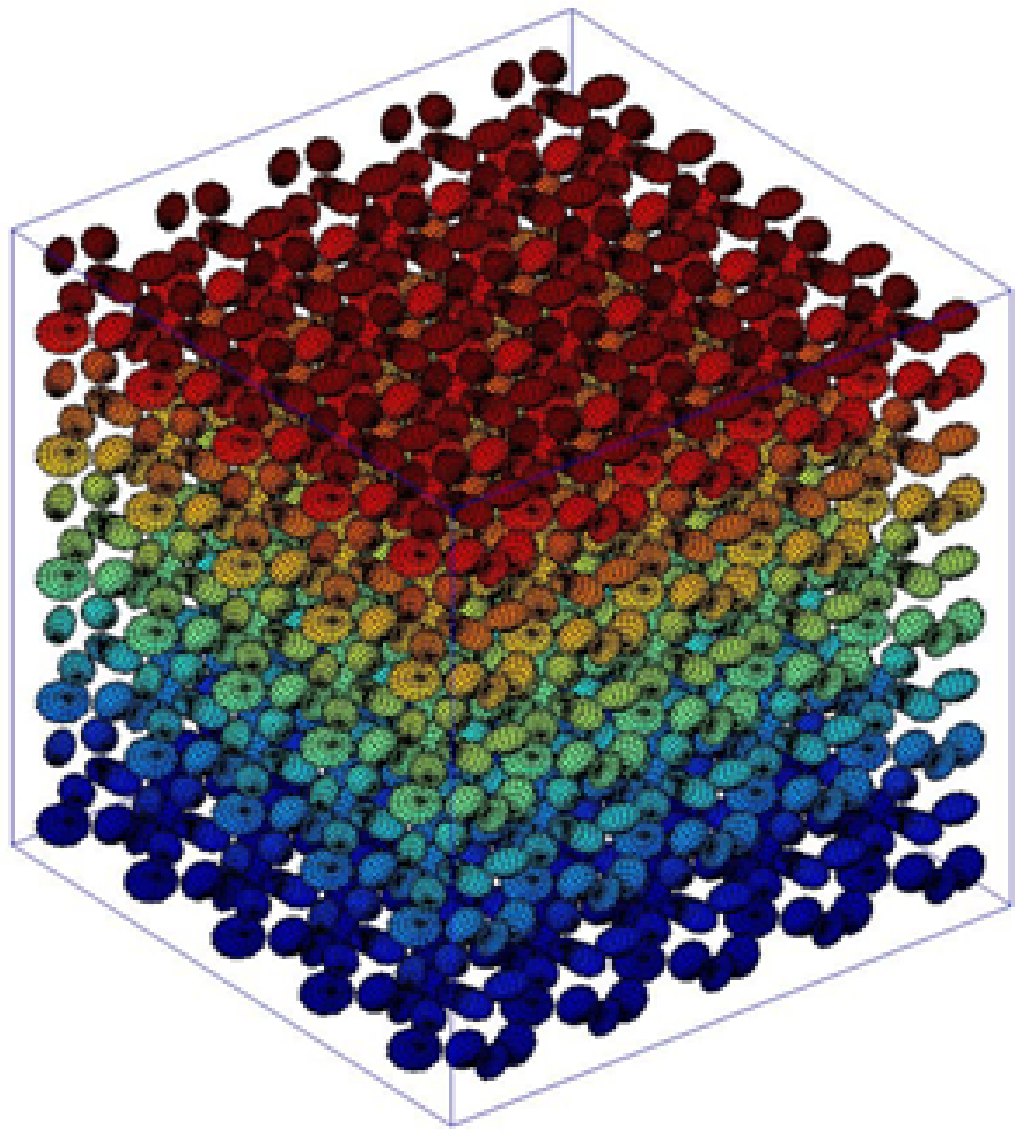}
\tiny{(b)}\includegraphics[width=6.1cm,height=5cm]{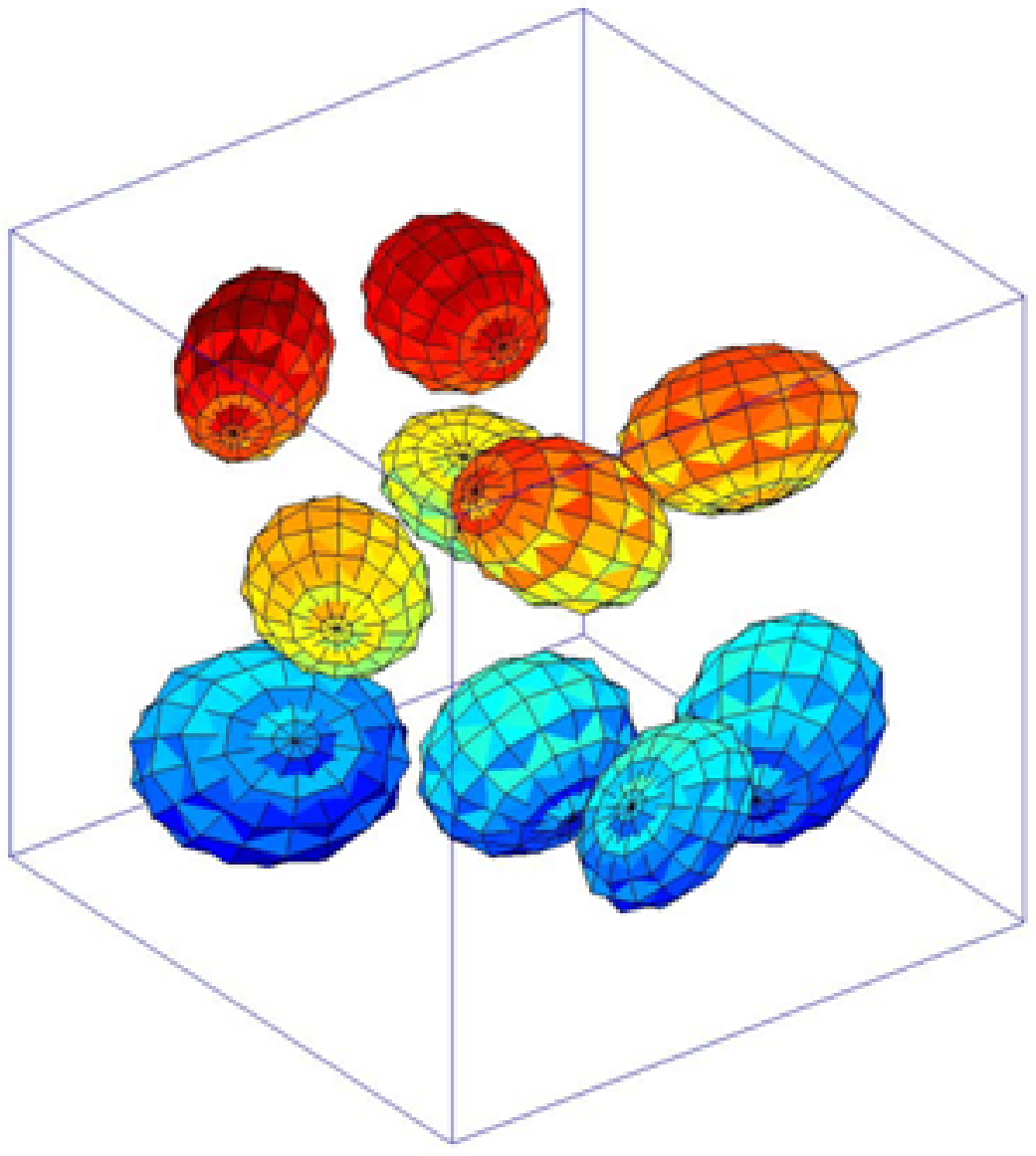}
\caption{(a) A semiconductor device $ \Omega $  with a great of number of quantum dots; (b)
the periodic cell
$Q$.}\label{fig1-1}
\end{center}
\end{figure}

We make the following assumptions:

${\bf({\rm A}_1)}$  Let $ {\bm \xi}=\varepsilon^{-1} \mathbf{x} $, and assume that the elements
$ a_{ij}({\bm \xi}) $, $\eta_{ij}({\bm \xi}) $ and $ \mu_{ij}({\bm \xi}) $
 of matrices $ A({\bm \xi}) $, $ \eta({\bm \xi})$ and
 $ \mu({\bm\xi})$; $ V_c({\bm \xi}) $ and $ V_{xc}(\mathbf{x},{\bm\xi}) $  are rapidly oscillating
 1-periodic real functions of space, respectively.

 ${\bf({\rm A}_2)}$ The matrices $(a_{ij}({\bm\xi}))$, $ (\eta_{ij}({\bm\xi})) $ and $ (\mu_{ij}({\bm\xi})) $ are real symmetric, and
satisfy the following uniform elliptic conditions in ${\bm\xi}$, i.e.
 \begin{equation*}
  \begin{array}{@{}l@{}}
 {\displaystyle   \alpha_0 |\mathbf{y}|^2 \le a_{ij}({\bm\xi})y_i y_j \le \alpha_1 |\mathbf{y}|^2,
   \quad
     \beta_0 |\mathbf{y}|^2 \le \eta_{ij}({\bm\xi}) y_i y_j \le \beta_1 |\mathbf{y}|^2,}\\[2mm]
  {\displaystyle   \gamma_0 |\mathbf{y}|^2 \le \mu_{ij}({\bm\xi})y_i y_j \le \gamma_1 |\mathbf{y}|^2,\quad |\mathbf{y}|^2=y_i y_i,}\\[2mm]
  {\displaystyle     0 < \alpha_0 \le \alpha_1, \quad 0<\beta_0\le \beta_1,\quad 0 < \gamma_0 \le \gamma_1,
   \quad \forall \mathbf{y}=(y_1, y_2, y_3)\in
   \mathbb{R}^3,}
  \end{array}
  \end{equation*}
where $\alpha_{0} $, $\alpha_{1}$, $ \beta_0 $, $ \beta_1 $, $ \gamma_0 $, $ \gamma_1 $ are constants
independent of $\varepsilon$.

 ${\bf({\rm A}_3)}$   $ a_{ij} $, $ \eta_{ij} $ , $ \mu_{ij} $,  $ V_c\in
L^{\infty}(\mathbb{R}^3) $, $ V_{xc}\in L^{\infty}(\Omega\times \mathbb{R}^3) $.

 ${\bf({\rm A}_4)}$ $\mathbf{f}\in
L^{2}(0,T;(L^{2}(\Omega))^{3}) $, $
\Psi_0\in \mathbb{H}^1_0(\Omega) $, $ {\bm\varphi}\in (H^1(\Omega))^3 $, $ {\bm\psi}\in (H^1(\Omega))^3 $,
where $ H_0^1(\Omega) $ and $ \mathbb{H}^1_0(\Omega) $ denote the Sobolev spaces of the real-valued and the complex-valued
functions, respectively.

Maxwell-Schr\"{o}dinger system with rapidly oscillating discontinuous coefficients
originates from the interaction of matter and electromagnetic fields in heterogeneous nanostructures(see, e.g.,
\cite{Sch}). There are a great number of applications in laser physics, quantum Hall effects,
superconductivity and semiconductor optics and transport phenomena in heterogeneous photoelectronic
devices (see, e.g., \cite{Ez, Har, Rei, Ul}). We would like to state that, in this paper we choose the
Schr\"{o}dinger equation (\ref{eq:1-3}) with the effective mass
approximation (EMA) and do not introduce the Kohn-Sham equation.
There are three main reasons. First, if we use Kohn-Sham equation
for the heterogeneous photoelectronic devices, it is time-consuming
and difficult, due to the limitation of the computing scaling.
Second, the most widely used techniques to calculate the electronic
levels in nanostructures are EMA and its extension the multiband $
k\cdot p $ method . They have been particularly successful in the
case of heterostructures (see, e.g., \cite{Del, Ul}). Thirdly, Cao et al. \cite{Cao-2} gave a
reasonable interpretation why EMA has the high accuracy for
calculating the band structures of semiconductor materials in the
vicinity of point $\Gamma$, from the viewpoint of mathematics. One
of key points of \cite{Cao-2} is to use an important result of
\cite{All-1}, namely EMA in physics is equivalent to the
homogenization method in mathematics under some assumptions.

The main difficult points to analyze and solve the problem (\ref{eq:1-6})
have the following: nonlinear, nonconvex coupled system and
rapidly oscillating discontinuous coefficients. This paper
focuses on discussing the multiscale approach and the convergence
for the problem (\ref{eq:1-6}). A direct numerical method such as the finite-difference time-domain (FDTD)
method or finite element (FE) method for solving the problem (\ref{eq:1-6})
cannot produce accurate numerical solutions unless a very fine mesh is required.
On the other hand, since the elements of the coefficient matrices
$ A(\frac{\displaystyle \mathbf{x}}{\displaystyle \varepsilon}) $
, $ \eta(\frac{\displaystyle \mathbf{x}}{\displaystyle \varepsilon}) $
and $ \mu(\frac{\displaystyle \mathbf{x}}{\displaystyle \varepsilon}) $
are discontinuous, the regularity of the solution for problem
(\ref{eq:1-6}) maybe will be quite low (see, e.g., \cite{Cos-1, Cos-2}).
It might be extremely difficult to derive the convergence results for FDTD or FEM.

We recall that the homogenization method gives the overall solution
behavior by incorporating the fluctuations due to the heterogeneities
(see, e.g., \cite{Ben, Ji, Ole}). There are numerous important studies for the homogenization
method of Maxwell's equations, it is impossible to mention
all contributions here. We refer to \cite{Ben, Ji, San, Wel-1, Wel-2}.
Other related studies have
been reported in \cite{Bos, Cao-3, Cao-4, Wel-3, Zhang-1, Zhang-2}.
For the homogenization method of Schr\"{o}dinger equation, we refer to
\cite{All-1, Ben, Cao-2}. It should be stated that the theoretical results
of the homogenization method for Schr\"{o}dinger-Poisson system or Maxwell-Schr\"{o}dinger
system with rapidly oscillating coefficients are very limited.
Recently, Zhang, Cao and Luo \cite{Zhang-L} developed the homogenization method
and the multiscale method for the stationary Schr\"{o}dinger-Poisson system
in heterogeneous nanostructures and derived the convergence results of
these methods. To the best of our knowledge, there are few theoretical
results for the homogenization method and the multiscale method of
Maxwell-Schr\"{o}dinger system.

The main objectives of this paper are to present the homogenization method and
the multiscale asymptotic methods for a kind of time-dependent Maxwell-Schr\"{o}dinger system
(\ref{eq:1-6}), to develop the associated numerical algorithms
and to derive the convergence results for the present method.
The new results obtained in this paper are concluded as the following:

(i) For a bounded smooth domain or a bounded polyhedral
convex domain $ \Omega\subset \mathbb{R}^3 $,  we
present the homogenization method and the multiscale asymptotic method for the Maxwell-Schr\"{o}dinger
system (\ref{eq:1-6}) and derive the associated convergence results, see Theorems~\ref{thm2-1} and ~\ref{thm3-1}
and Corollary~\ref{cor3-1}.

(ii) We develop the multiscale numerical algorithms based on the multiscale asymptotic expansions
(\ref{eq:3-17})-(\ref{eq:3-19}) of the solution for the problem (\ref{eq:1-6}) and obtain some
theoretical results, see Propositions~ \ref{prop4-1} and ~\ref{prop4-2}.

(iii) The numercial results of the multiscale asymptotic method for
the Maxwell-Schr\"{o}dinger system (\ref{eq:1-6}) without the exchange-correlation potential
and with the exchange-correlation potential are provided and the validity of
homogenization and multiscale method is confirmed.

The remainder of this paper is organized as follows. In section 2,
we present the homogenization method and the multiscale asymptotic
method for the time-dependent Maxwell-Schr\"{o}dinger system
(\ref{eq:1-6}) with rapidly oscillating discontinuous
coefficients and derive the convergence results. In section 3, we
use respectively the finite element method to discretize the cell
problems and the modified homogenized Maxwell-Schr\"{o}dinger
system, and obtain the convergence results. We use the
self-consistent iterative method (SCF) to solve the discrete system of
the modified homogenized Maxwell-Schr\"{o}dinger system. In order to
accelerate the convergence speed of the iterative algorithm, the
simple mixed method is applied. For more information, we refer to
\cite{Ha, Hu, Pul-1}. Finally, the numerical examples and some remarks are given.

\section{The homogenization method and the convergence}\label{sec-2}

In this section, we first introduce the results of well-posedness for
the time-dependent Maxwell-Schr\"{o}dinger system. Then we develop
the homogenization method for the problem (\ref{eq:1-6})
and give the convergence result of the homogenization method.

\subsection{The well-posedness of the problem (\ref{eq:1-6})}

Many authors have studied the existence of the solution for the time-dependent Schr\"{o}dinger-Maxwell system.
We recall some important studies about the problem.
Nakamitsu and Tsutsumi \cite{Nak} proved that the initial value problem for the Maxwell-Schr\"{o}dinger
system in the Lorentz gauge is globally well-posed in a space of smooth functions in dimensions one and two,
and locally well-posed in dimension three. Tsutsumi\cite{Tsu-1} showed, by constructing the
modified wave operator, that there exist global smooth solutions in the Coulomb gauge for a certain
class of scattered data as $ t\rightarrow +\infty $. The existence of global finite-energy solutions
was established for the initial value problem for the Maxwell-Schr\"{o}dinger system in the Coulomb, Lorentz
and temporal gauges by Guo, Nakamitsu and Strauss in \cite{Guo}.
Ginibre and Velo \cite{Gin} studied the theory of scattering for the Maxwell-Schr\"{o}dinger system
in space dimension 3, in the Coulomb gauge. The existence of the modified wave operators for the system
in 3+1 dimension space time was derived in the special case. Nakamura and Wada \cite{Nak-1, Nak-2}
investigated the time local and global well-posedness for the Maxwell-Schr\"{o}dinger equations
in Sobolev spaces in three spatial dimensions. One of the main results is that
the solutions exist time globally for large data. Recently Wada \cite{Wa} proved unique solvability of the
Maxwell-Schr\"{o}dinger equations in $ \mathbb{R}^{1+2} $ spacetime.
Benci and Fortunato \cite{Benci} investigated the existence of charged solitons for the nonlinear Maxwell-Schr\"{o}dinger
equations.

In summary, we would like to state that the above results are obtained based on the following assumptions:
(i) the initial problem (i.e. the Cauchy problem) of the Maxwell-Schr\"{o}dinger equations with constant coefficients; (ii)
no exchange-correlation potential function. Besides, they studied the well-posedness for the Maxwell-\\
Schr\"{o}dinger equations
with the vector potential and scalar potential
 $ \mathbf{A}-\varphi$  instead of the length gauge $-e\mathbf{E}\cdot\mathbf{x}$ used in this paper
 (see, e.g., \cite{Cohen, Oh}).
For the well-posedness of the solution of the initial-boundary value problem
for the Maxwell-Schr\"{o}dinger with rapidly oscillating discontinuous coefficients
seems to be open. The well-posedness of the problem (\ref{eq:1-6}) is not the issue of this paper.
Here and in the sequel, we assume that the problem (\ref{eq:1-6}) has one and only one weak solution.

\subsection{The homogenization method}

In this section, we will study the asymptotic behaviour of the solution for the problem (\ref{eq:1-6})
as $ \varepsilon\rightarrow 0 $, i.e. the homogenization method. As usual we introduce two variables:
$ \mathbf{x} $ and $ {\bm \xi=\varepsilon^{-1} \mathbf{x}} $.
For simplicity, we assume that the reference cell $ Q=(0,1)^3 $ without loss of generality. For
general cases, let $ Q=(0,l_1) \times(0,l_2) \times (0, l_3)
$, we refer to a classical book \cite{Ben}. For the coefficient matrices $ (a_{ij}({\bm \xi})) $,
$ (\eta_{ij}({\bm \xi})) $ and $ (\mu_{ij}({\bm \xi})) $, we define the following scalar
cell functions:
\begin{equation}\label{eq:2-1}
\left\{
\begin{array}{@{}l@{}}
{\displaystyle{\frac{\partial}{\partial \xi_i}}\big(
a_{ij}({\bm \xi})\displaystyle{\frac{\partial
\theta_k^a({\bm \xi})}{\partial \xi_j}}\big)=-\frac{
\partial}{\partial \xi_i}\big(a_{ik}({\bm \xi})\big), \quad {\bm \xi}\in Q,}\\[2.3mm]
{\displaystyle \theta_k^a({\bm \xi}) \,\,\, \hbox{is 1-periodic}\,\,\, \hbox{in}\,\,\,
{\bm \xi},}\\[2.1mm]
{\displaystyle \int_{Q} \theta_k^a({\bm\xi})\mathnormal{d}{\bm \xi}=0, \quad
k=1,2,3,}
\end{array}
\right.
\end{equation}
\begin{equation}\label{eq:2-2}
\left\{
\begin{array}{@{}l@{}}
{\displaystyle{\frac{\partial}{\partial \xi_i}}\big(
\eta_{ij}({\bm \xi})\displaystyle{\frac{\partial
\theta_k^\eta({\bm \xi})}{\partial \xi_j}}\big)=-\frac{
\partial}{\partial \xi_i}\big(\eta_{ik}({\bm \xi})\big), \quad {\bm \xi}\in Q,}\\[2.3mm]
{\displaystyle \theta_k^\eta({\bm \xi}) \,\,\, \hbox{is 1-periodic}\,\,\, \hbox{in}\,\,\,
{\bm \xi},}\\[2.1mm]
{\displaystyle \int_{Q} \theta_k^\eta({\bm\xi})\mathnormal{d}{\bm \xi}=0, \quad
k=1,2,3,}
\end{array}
\right.
\end{equation}
and
\begin{equation}\label{eq:2-3}
\left\{
\begin{array}{@{}l@{}}
{\displaystyle{\frac{\partial}{\partial \xi_i}}\big(
\mu_{ij}({\bm \xi})\displaystyle{\frac{\partial
\theta_k^\mu({\bm \xi})}{\partial \xi_j}}\big)=-\frac{
\partial}{\partial \xi_i}\big(\mu_{ik}({\bm \xi})\big), \quad {\bm \xi}\in Q,}\\[2.3mm]
{\displaystyle \theta_k^\mu({\bm \xi}) \,\,\, \hbox{is 1-periodic}\,\,\, \hbox{in}\,\,\,
{\bm \xi},}\\[2.1mm]
{\displaystyle \int_{Q} \theta_k^\mu({\bm\xi})\mathnormal{d}{\bm \xi}=0, \quad
k=1,2,3.}
\end{array}
\right.
\end{equation}

\begin{rem}\label{rem2-1}
Under the assumptions $({\rm A}_1)$--$({\rm A}_3)$, the existence and uniqueness of
the solutions for the cell problems (\ref{eq:2-1})-(\ref{eq:2-3}) can be established
based upon Lax-Milgram lemma.
\end{rem}

The homogenized coefficient matrices $ \widehat{A}=(\hat{a}_{ij}) $, $ \widehat{\eta}=(\hat{\eta}_{ij}) $ and
$ \widehat{\mu}=(\hat{\mu}_{ij}) $ are calculated by
\begin{equation}\label{eq:2-4}
\begin{array}{@{}l@{}}
{\displaystyle \hat{a}_{ij}=\int\limits_Q \big(a_{ij}({\bm\xi})+a_{ik}({\bm\xi})
\frac{\partial \theta_j^a({\bm\xi})}{\partial \xi_k}\big)\mathnormal{d}{\bm\xi},\quad
\hat{\eta}_{ij}=\int\limits_Q \big(\eta_{ij}({\bm\xi})+\eta_{ik}({\bm\xi})
\frac{\partial \theta_j^\eta({\bm\xi})}{\partial \xi_k}\big)\mathnormal{d}{\bm\xi},}\\[2mm]
{\displaystyle \qquad \qquad\qquad \hat{\mu}_{ij}=\int\limits_Q \big(\mu_{ij}({\bm\xi})+\mu_{ik}({\bm\xi})
\frac{\partial \theta_j^\mu({\bm\xi})}{\partial \xi_k}\big)\mathnormal{d}{\bm\xi},\quad i,j=1,2,3.}
\end{array}
\end{equation}

Hence, the homogenized Maxwell-Schr\"{o}dinger equations can formally be written as follows:
\begin{equation}\label{eq:2-5}
\left\{
\begin{array}{@{}l@{}}
{\displaystyle i\frac{\partial \Psi^0(\mathbf{x},t)}{\partial t}=
-\nabla \cdot \big(\widehat{A}\nabla
\Psi^0(\mathbf{x},t)\big)
+\big(\langle V_c\rangle-\mathbf{E}^0\cdot\widehat{\bm{\zeta}}
+V_{xc}[\rho^0]\big)
\Psi^0(\mathbf{x},t),}\\[2.3mm]
{\displaystyle \qquad \qquad \qquad \qquad (\mathbf{x},t)\in
\Omega\times(0,T),}\\[2.3mm]
{\displaystyle \widehat{\eta}\frac{\partial \mathbf{E}^0(\mathbf{x},t)}{\partial t}
=\mathbf{curl}\/ \mathbf{H}^0(\mathbf{x},t)+\mathbf{f}(\mathbf{x},t)
-\mathbf{J}_q^0,\,\, \nabla\cdot \mathbf{f}=0,\,\,(\mathbf{x},t)\in
\Omega\times(0,T),}\\[2.3mm]
{\displaystyle \widehat{\mu}
\frac{\partial \mathbf{H}^0(\mathbf{x},t)}{\partial t}=-\mathbf{curl}\/ \mathbf{E}^0(\mathbf{x},t)
,\,\, (\mathbf{x},t)\in
\Omega\times(0,T),}\\[2.3mm]
{\displaystyle \nabla\cdot \big(\widehat{\eta}\mathbf{E}^0(\mathbf{x},t)\big)=\rho^0(\mathbf{x},t),
\quad\nabla\cdot \big(\widehat{\mu}\mathbf{H}^0(\mathbf{x},t)\big)=0,
\,\, (\mathbf{x},t)\in
\Omega\times(0,T),}\\[2.3mm]
{\displaystyle \widehat{\bm{\zeta}}=-\mathbf{x},\,\, \rho^0=N{\vert\Psi^0\vert}^2
,\,\,\mathbf{J}_q^0=iN\big[(\overline{\Psi}^0) \widehat{A}\nabla\Psi^0-
\Psi^0 \widehat{A}\nabla\overline{\Psi}^0\big].}
\end{array}
\right.
\end{equation}

We take (\ref{eq:2-5}) to hold in $\Omega $ subject to the boundary conditions
\begin{equation}\label{eq:2-6}
{\displaystyle \Psi^0(\mathbf{x},t)=0,\quad \mathbf{E}^0(\mathbf{x},t)\times\mathbf{n}=0, \quad (\mathbf{x},t)\in
\partial \Omega\times(0,T).}
\end{equation}

For initial conditions are taken as
\begin{equation}\label{eq:2-7}
{\displaystyle \Psi^0(\mathbf{x},0)=\Psi_0(\mathbf{x}),\quad
\mathbf{E}^0(\mathbf{x},0)=\widehat{\eta}^{-1}{\bm\varphi}(\mathbf{x}),\quad
\mathbf{H}^0(\mathbf{x},0)=\widehat{\mu}^{-1}{\bm\psi}(\mathbf{x}).}
\end{equation}

\begin{rem}\label{rem2-2}
We observe that the homogenized problem (\ref{eq:2-5}) is a Maxwell-Schr\"{o}dinger system with
constant coefficients, it is much simpler to carry out the theoretical analysis and
numerical computations than those of the original problem
(\ref{eq:1-6}). This is the motivation of the study in this paper.
\end{rem}

Next we derive the convergence result of the homogenization method for the problem (\ref{eq:1-6}).
\begin{theorem}\label{thm2-1}
Let $\Omega \subset \mathbb{R}^3 $ be a
bounded smooth domain or a bounded polyhedral convex domain
with a periodic microstructure. Suppose that $ (\Psi^\varepsilon, \mathbf{E}^\varepsilon, \mathbf{H}^\varepsilon) $
be the unique weak solution of the original problem (\ref{eq:1-6}) without the exchange-correlation potential,
i.e. $ V_{xc}(\rho^\varepsilon)\equiv 0 $, and let
$ (\Psi^0, \mathbf{E}^0, \mathbf{H}^0) $ be the unique weak solution of the associated homogenized
problem (\ref{eq:2-5}). Under the assumptions $ (A_1)-(A_4) $, we have
\begin{equation}\label{eq:2-8}
\begin{array}{@{}l@{}}
{\displaystyle \Psi^\varepsilon\stackrel{\mathrm{w}^\ast}{\rightharpoonup} \Psi^0,\quad {\rm weakly \,\,star\,\, in}\quad L^\infty(0,T; \/\mathbb{H}_0^1(\Omega)),\quad {\rm as}\quad
\varepsilon\rightarrow 0,}\\[2mm]
{\displaystyle \mathbf{E}^\varepsilon\stackrel{\mathrm{w}^\ast}{\rightharpoonup} \mathbf{E}^0,\quad {\rm weakly \,\, star\,\, in}\quad L^\infty(0,T; \/(L^2(\Omega))^3),\quad {\rm as}\quad
\varepsilon\rightarrow 0,}\\[2mm]
{\displaystyle \mathbf{H}^\varepsilon\stackrel{\mathrm{w}^\ast}{\rightharpoonup} \mathbf{H}^0,\quad {\rm weakly \,\, star\,\, in}\quad L^\infty(0,T;\/ (L^2(\Omega))^3),\quad {\rm as}\quad
\varepsilon\rightarrow 0.}
\end{array}
\end{equation}
\end{theorem}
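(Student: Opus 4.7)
The plan is to obtain uniform (in $\varepsilon$) a priori estimates on $(\Psi^\varepsilon,\mathbf{E}^\varepsilon,\mathbf{H}^\varepsilon)$, extract weakly-star convergent subsequences by Banach–Alaoglu, upgrade $\Psi^\varepsilon$ to strong $L^2$ convergence by an Aubin–Lions argument, and then pass to the limit in both the linear terms with oscillating coefficients (using the cell problems \eqref{eq:2-1}--\eqref{eq:2-3} and H-convergence/two-scale convergence) and in the nonlinear coupling terms (using the strong convergence of $\Psi^\varepsilon$). Uniqueness of the weak solution of the homogenized system \eqref{eq:2-5} will then force the entire sequence to converge to the claimed limit.

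\textbf{Step 1: Uniform estimates.} I test the Schr\"odinger equation with $\overline{\Psi}^\varepsilon$ and take imaginary parts to obtain conservation of mass $\|\Psi^\varepsilon(t)\|_{L^2}=\|\Psi_0\|_{L^2}$. Testing the Maxwell equations with $\mathbf{E}^\varepsilon$ and $\mathbf{H}^\varepsilon$, respectively, and summing yields
\begin{equation*}
\frac{1}{2}\frac{d}{dt}\!\int_\Omega\!\Bigl(\eta\bigl(\tfrac{\mathbf{x}}{\varepsilon}\bigr)\mathbf{E}^\varepsilon\cdot\mathbf{E}^\varepsilon+\mu\bigl(\tfrac{\mathbf{x}}{\varepsilon}\bigr)\mathbf{H}^\varepsilon\cdot\mathbf{H}^\varepsilon\Bigr)\,d\mathbf{x}=\int_\Omega \mathbf{f}\cdot\mathbf{E}^\varepsilon\,d\mathbf{x}-\int_\Omega \mathbf{J}_q^\varepsilon\cdot\mathbf{E}^\varepsilon\,d\mathbf{x}.
\end{equation*}
The coupling term $\int \mathbf{J}_q^\varepsilon\cdot\mathbf{E}^\varepsilon$ is exactly the work term that reappears, with the opposite sign, when I test the Schr\"odinger equation with $\partial_t\overline{\Psi}^\varepsilon$ and take real parts; summing the two energies produces a single conserved (modulo the source $\mathbf{f}$) total energy. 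Using assumption $({\rm A}_2)$ and a Gr\"onwall argument, I obtain, uniformly in $\varepsilon$,
\begin{equation*}
\|\Psi^\varepsilon\|_{L^\infty(0,T;\mathbb{H}^1_0(\Omega))}+\|\mathbf{E}^\varepsilon\|_{L^\infty(0,T;(L^2)^3)}+\|\mathbf{H}^\varepsilon\|_{L^\infty(0,T;(L^2)^3)}\le C.
\end{equation*}
The Schr\"odinger equation then yields $\|\partial_t\Psi^\varepsilon\|_{L^\infty(0,T;\mathbb{H}^{-1})}\le C$.

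\textbf{Step 2: Extraction of limits.} By Banach–Alaoglu, along a subsequence, $\Psi^\varepsilon\stackrel{\mathrm{w}^\ast}{\rightharpoonup}\Psi^0$ in $L^\infty(0,T;\mathbb{H}^1_0)$ and $\mathbf{E}^\varepsilon\stackrel{\mathrm{w}^\ast}{\rightharpoonup}\mathbf{E}^*$, $\mathbf{H}^\varepsilon\stackrel{\mathrm{w}^\ast}{\rightharpoonup}\mathbf{H}^*$ in $L^\infty(0,T;(L^2)^3)$. By the Aubin–Lions lemma (with the bound on $\partial_t\Psi^\varepsilon$ and the compact embedding $\mathbb{H}^1_0\hookrightarrow L^2$), one has strong convergence $\Psi^\varepsilon\to\Psi^0$ in $C(0,T;L^2(\Omega))$. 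Standard H-convergence (or two-scale convergence) for the divergence-form operator $-\nabla\!\cdot(A(\cdot/\varepsilon)\nabla)$ yields $A(\cdot/\varepsilon)\nabla\Psi^\varepsilon\rightharpoonup\widehat{A}\nabla\Psi^0$ in $L^2$ with the homogenized matrix \eqref{eq:2-4}; the analogous fact for $\eta(\cdot/\varepsilon)\mathbf{E}^\varepsilon$ and $\mu(\cdot/\varepsilon)\mathbf{H}^\varepsilon$, combined with the Maxwell system (which supplies uniform $L^2$ bounds on $\operatorname{curl}\mathbf{E}^\varepsilon$ and $\operatorname{curl}\mathbf{H}^\varepsilon$ via the evolution equations, and hence controls the oscillations), identifies the weak limits of $\eta(\cdot/\varepsilon)\mathbf{E}^\varepsilon$ and $\mu(\cdot/\varepsilon)\mathbf{H}^\varepsilon$ as $\widehat{\eta}\mathbf{E}^*$ and $\widehat{\mu}\mathbf{H}^*$.

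\textbf{Step 3: Passing to the limit in the nonlinear terms.} This is the main obstacle, because both the dipole coupling $\mathbf{E}^\varepsilon\cdot\widehat{\bm\zeta}\,\Psi^\varepsilon=-\mathbf{E}^\varepsilon\cdot\mathbf{x}\,\Psi^\varepsilon$ and the quantum current
\begin{equation*}
\mathbf{J}_q^\varepsilon=iN\bigl[\overline{\Psi}^\varepsilon A(\tfrac{\mathbf{x}}{\varepsilon})\nabla\Psi^\varepsilon-\Psi^\varepsilon A(\tfrac{\mathbf{x}}{\varepsilon})\nabla\overline{\Psi}^\varepsilon\bigr]
\end{equation*}
are products of only weakly converging factors. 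The dipole term is handled by the strong $L^2$ convergence of $\Psi^\varepsilon$: $\mathbf{x}\,\Psi^\varepsilon\to\mathbf{x}\,\Psi^0$ strongly in $L^2$, so the product with the weakly converging $\mathbf{E}^\varepsilon$ passes to the limit $-\mathbf{E}^*\cdot\mathbf{x}\,\Psi^0$ at least as distributions. For the quantum current, I combine the strong convergence $\Psi^\varepsilon\to\Psi^0$ with the homogenization flux convergence $A(\cdot/\varepsilon)\nabla\Psi^\varepsilon\rightharpoonup\widehat{A}\nabla\Psi^0$; since the scalar factor converges strongly in $L^2$ and the vector factor weakly in $L^2$, the product converges in the sense of distributions to $\overline{\Psi}^0\widehat{A}\nabla\Psi^0$, yielding $\mathbf{J}_q^\varepsilon\to\mathbf{J}_q^0$ as defined in \eqref{eq:2-5}. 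Similarly $\rho^\varepsilon=N|\Psi^\varepsilon|^2\to N|\Psi^0|^2=\rho^0$ strongly.

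\textbf{Step 4: Conclusion.} Passing to the limit in each equation of \eqref{eq:1-6} and in the initial conditions \eqref{eq:1-8} (using that $\eta^{-1}(\cdot/\varepsilon)\bm\varphi\rightharpoonup\widehat{\eta}^{-1}\bm\varphi$, and analogously for $\bm\psi$), I conclude that $(\Psi^0,\mathbf{E}^*,\mathbf{H}^*)$ is a weak solution of the homogenized system \eqref{eq:2-5}--\eqref{eq:2-7}. Invoking the assumed uniqueness of this solution, $(\mathbf{E}^*,\mathbf{H}^*)=(\mathbf{E}^0,\mathbf{H}^0)$, and the convergence holds for the whole sequence, not just a subsequence. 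The anticipated main difficulty is the rigorous identification of the weak limit of $\mathbf{J}_q^\varepsilon$, since it couples the oscillating matrix $A(\cdot/\varepsilon)$ with both $\Psi^\varepsilon$ and $\nabla\Psi^\varepsilon$; the strong $L^2$ convergence of $\Psi^\varepsilon$ obtained via Aubin–Lions is the crucial ingredient that makes this step work.
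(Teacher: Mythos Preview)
Your overall strategy is natural, but Step~1 contains a genuine gap that the rest of the argument cannot absorb. The cancellation you invoke---that testing the Schr\"odinger equation with $\partial_t\overline{\Psi}^\varepsilon$ produces precisely $-\int_\Omega\mathbf{J}_q^\varepsilon\cdot\mathbf{E}^\varepsilon$---holds for the minimally coupled Maxwell--Schr\"odinger system in the $\mathbf{A}$--$\varphi$ gauge, but \emph{not} for the length-gauge model \eqref{eq:1-6}. Here the interaction Hamiltonian is $-\mathbf{E}^\varepsilon\cdot\widehat{\bm\zeta}=\mathbf{E}^\varepsilon\cdot\mathbf{x}$, a position coupling, whereas $\mathbf{J}_q^\varepsilon$ in \eqref{eq:1-6} is the kinetic (momentum) current. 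Testing with $\partial_t\overline{\Psi}^\varepsilon$ and taking real parts gives
\[
\frac{d}{dt}\Bigl[\int_\Omega A\nabla\Psi^\varepsilon\cdot\nabla\overline{\Psi}^\varepsilon
+\int_\Omega V_c|\Psi^\varepsilon|^2
+\int_\Omega(\mathbf{E}^\varepsilon\cdot\mathbf{x})|\Psi^\varepsilon|^2\Bigr]
=\int_\Omega(\partial_t\mathbf{E}^\varepsilon\cdot\mathbf{x})|\Psi^\varepsilon|^2,
\]
which neither equals $\int_\Omega\mathbf{J}_q^\varepsilon\cdot\mathbf{E}^\varepsilon$ nor is otherwise controllable by the quantities you have (you would need $\partial_t\mathbf{E}^\varepsilon\in L^2$, which is not part of the basic energy). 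Trying to estimate $\int_\Omega\mathbf{J}_q^\varepsilon\cdot\mathbf{E}^\varepsilon$ directly by H\"older also fails in three dimensions: any splitting requires either $\Psi^\varepsilon\in L^\infty$ or $\mathbf{E}^\varepsilon\in L^p$ for some $p>2$, neither of which follows from $\Psi^\varepsilon\in H^1_0$ and $\mathbf{E}^\varepsilon\in L^2$. Consequently your Gr\"onwall argument does not close, and without the uniform $L^\infty(0,T;\mathbb{H}^1_0)$ bound on $\Psi^\varepsilon$ the Aubin--Lions step and the identification of $\mathbf{J}_q^\varepsilon$ in Step~3 collapse.

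The paper circumvents this difficulty by a rather different route. It first exploits the Gauss law $\nabla\cdot(\eta(\cdot/\varepsilon)\mathbf{E}^\varepsilon)=\rho^\varepsilon$ in Coulomb gauge $\mathbf{E}^\varepsilon=-\nabla\phi^\varepsilon$, invoking an external bound $\|\rho^\varepsilon\|_{H^{-1}}\le C$ (Corollary~5.5 of \cite{Kai-1}) to obtain $\|\phi^\varepsilon\|_{H^1_0}\le C$ and hence a weak limit $\widehat{\mathbf{E}}^0$ for $\mathbf{E}^\varepsilon$ \emph{before} any Schr\"odinger estimate. It then introduces an auxiliary linear Schr\"odinger equation \eqref{eq:2-12} in which $\mathbf{E}^\varepsilon$ and $V_c(\cdot/\varepsilon)$ are replaced by the fixed $\widehat{\mathbf{E}}^0$ and $\langle V_c\rangle$; for this decoupled problem the $L^\infty(0,T;\mathbb{H}^1_0)$ bound and homogenization are classical. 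Finally, the difference $\Psi^\varepsilon-\widehat{\Psi}^\varepsilon$ is shown to vanish in $L^\infty(0,T;\mathbb{H}^1_0)$ via a coercivity-shifting trick (adding and subtracting $\alpha\|\cdot\|^2$ and using a Fredholm-type argument from \cite{Monk}), which yields the weak convergence of $\mathbf{J}_q^\varepsilon$ and then the Maxwell homogenization via \cite{Ben,Duv}. If you want to rescue your direct approach, the missing ingredient is an independent a~priori control on $\mathbf{E}^\varepsilon$ coming from the constraint equation $(\ref{eq:1-6})_4$, not from the evolution equations.
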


\begin{proof}
To begin, we prove that $\|\rho^\varepsilon\|_{H^{-1}(\Omega)}\leq C $, where
$ C $ is a constant independent of $ \varepsilon $. We observe
that $ \rho^\varepsilon $ in this paper is viewed as the carrier
density operator $ \mathcal{N} $ from Definition 2.12 of
\cite{Kai-1}. Under the assumptions $ (A_1)-(A_4) $, one can check
that $ \rho^\varepsilon $ satisfies all conditions of Corollary 5.5
of \cite{Kai-1}. Applying this corollary, we prove that
$\|\rho^\varepsilon\|_{H^{-1}(\Omega)}\leq C$,
where $ C $ is a constant independent of $ \varepsilon $.

We recall the equation $(\ref{eq:1-6})_4 $. Under the Coulomb gauge, we have
\begin{equation*}
{\displaystyle -\nabla\cdot \big(\eta(\frac{\mathbf{x}}{\varepsilon})
\nabla \phi^\varepsilon(\mathbf{x},t)\big)=\rho^\varepsilon(\mathbf{x},t),\quad \phi^\varepsilon\in H_0^1(\Omega),}
\end{equation*}
where $ \mathbf{E}^\varepsilon(\mathbf{x},t)=-\nabla \phi^\varepsilon(\mathbf{x},t) $,
$ \phi^\varepsilon(\mathbf{x},t)=\phi^\varepsilon(\rho^\varepsilon) $
and $ t\in (0,T) $ plays the role of a parameter. Using {\it a-priori} estimates of elliptic
equations (see \cite[p.~181]{Gil}), we get $\|\phi^\varepsilon(\rho^\varepsilon)\|_{H^1_0(\Omega)}
\leq C\|\rho^\varepsilon\|_{H^{-1}(\Omega)}\leq C $,
where $ C $ is a constant independent of $ \varepsilon $. Therefore, for any fixed $ t\in (0,T) $,
there is a subsequence, without confusion still denoted by $ \phi^\varepsilon(\rho^\varepsilon) $,
such that
\begin{equation}\label{eq:2-9}
{\displaystyle\phi^\varepsilon \stackrel{\mathrm{w}}{\rightharpoonup}\widehat{\phi}^0 \quad \hbox{weakly \,\,in}
\quad H^1_0(\Omega) \quad \hbox{as}\quad
\varepsilon\rightarrow 0.}
\end{equation}
Furthermore, for any fixed $ t\in (0,T) $, using the homogenization result of the elliptic equation
(see, e.g., \cite[p.~151-152]{Ji} or \cite[p.~29-30]{Ben}),
we get
\begin{equation}\label{eq:2-10}
\begin{array}{@{}l@{}}
{\displaystyle \mathbf{E}^\varepsilon\equiv -\nabla \phi^\varepsilon \stackrel{\mathrm{w}}{\rightharpoonup}-\nabla\widehat{\phi}^0
\stackrel{\mathrm{def}}{=}\widehat{\mathbf{E}}^0,\quad \hbox{weakly \,\, in}
\quad (L^2(\Omega))^3 \quad \hbox{as}\quad
\varepsilon\rightarrow 0,}\\[2mm]
{\displaystyle \eta(\frac{\mathbf{x}}{\varepsilon})\nabla \phi^\varepsilon
\stackrel{\mathrm{w}}{\rightharpoonup}\widehat{\eta}\nabla \widehat{\phi}^0,
 \quad \hbox{weakly \,\, in}
\quad (L^2(\Omega))^3 \quad \hbox{as}\quad
\varepsilon\rightarrow 0.}\\
\end{array}
\end{equation}

Thanks to $ (A_1) $ and $ (A_3) $, $ V_c $ is a 1-periodic function
in $ L^p(Q) $ and satisfies all conditions of Theorem 2.6 of
\cite[p.~33]{Cio}. Then we get
\begin{equation}\label{eq:2-11}
{\displaystyle V_c(\frac{x}{\varepsilon})\rightarrow
\langle V_c \rangle\,\,\, \hbox{weakly \,\,in}\,\,\, L^p(\Omega),\,\,\, \hbox{as}\,\,\,\varepsilon\rightarrow 0,\quad  1<p<+\infty.}
 \end{equation}

We recall $(\ref{eq:1-6})_1 $, and consider the following modified Sch\"{o}dinger
equation:
\begin{equation}\label{eq:2-12}
{\displaystyle i\frac{\partial \widehat{\Psi}^\varepsilon(\mathbf{x},t)}{\partial t}
=-\nabla\cdot \big(A(\frac{\mathbf{x}}{\varepsilon})\nabla
\widehat{\Psi}^\varepsilon(\mathbf{x},t)\big)
+\big(\langle V_c\rangle-\widehat{\mathbf{E}}^0\cdot \widehat{\bm \zeta}\big)
\widehat{\Psi}^\varepsilon(\mathbf{x},t).}
\end{equation}

Define $ (u,v)=\int_\Omega u \bar{v}\mathnormal{d}\mathbf{x} $, $
a^\varepsilon(u,v)=\int_\Omega a_{ij}(\frac{\displaystyle \mathbf{x}}{\displaystyle \varepsilon})
\frac{\displaystyle\partial u}{\displaystyle\partial x_j}\frac{\displaystyle\partial \bar{v}}{\displaystyle\partial x_i}\mathnormal{d}\mathbf{x} $,
where $ \bar{v} $ denotes the complex conjugate of $ v $. For simplicity, we assume that
$ \widehat{\Psi}^\varepsilon(\mathbf{x},0)=\Psi_0(\mathbf{x})\equiv 0 $ without loss of generality.
The variational form of (\ref{eq:2-12}) is the following:
\begin{equation}\label{eq:2-13}
{\displaystyle -i(\dot{\widehat{\Psi}^\varepsilon}, v)+a^\varepsilon(\widehat{\Psi}^\varepsilon, v)
+\big((\langle V_c\rangle-\widehat{\mathbf{E}}^0\cdot \widehat{\bm \zeta})\widehat{\Psi}^\varepsilon,v\big)=0,}
\end{equation}
where $ \dot{\widehat{\Psi}^\varepsilon} $ denotes the derivative of $ \widehat{\Psi}^\varepsilon $
with respect to $ t $. Taking $ v=\dot{\widehat{\Psi}^\varepsilon} $ in (\ref{eq:2-13}) and taking the real part,
we get
\begin{equation*}
{\displaystyle \frac{1}{2}\frac{d}{d t} a^\varepsilon(\widehat{\Psi}^\varepsilon)
=\frac{1}{2}\mathcal{R}e\frac{d}{d t}\big((\langle V_c\rangle-\widehat{\mathbf{E}}^0\cdot \widehat{\bm \zeta})\widehat{\Psi}^\varepsilon,
\widehat{\Psi}^\varepsilon\big)+\frac{1}{2}\mathcal{R}e\big((\dot{\widehat{\mathbf{E}}}^0\cdot \widehat{\bm \zeta})\widehat{\Psi}^\varepsilon,
\widehat{\Psi}^\varepsilon\big),}
\end{equation*}
and consequently
\begin{equation*}
{\displaystyle a^\varepsilon(\widehat{\Psi}^\varepsilon(t))=\mathcal{R}e\big((\langle V_c\rangle-\widehat{\mathbf{E}}^0(t)\cdot \widehat{\bm \zeta})\widehat{\Psi}^\varepsilon(t),
\widehat{\Psi}^\varepsilon(t)\big)+\mathcal{R}e\int_0^t\big((\dot{\widehat{\mathbf{E}}}^0\cdot\widehat{\bm \zeta})\widehat{\Psi}^\varepsilon,\widehat{\Psi}^\varepsilon\big)
\mathnormal{d}t,}
\end{equation*}
where $ \mathcal{R}e(u) $ denotes the real port of $ u $. Hence it follows that (one obtains
a preliminary estimate by taking $ v=\widehat{\Psi}^\varepsilon $ in (\ref{eq:2-13}))
\begin{equation*}
{\displaystyle \|\widehat{\Psi}^\varepsilon\|_{L^\infty(0,T; \/\mathbb{H}_0^1(\Omega))}\leq C,}
\end{equation*}
where $ C $ is a positive independent of $ \varepsilon $. Then we can extract a subsequence, still denoted by
$ \widehat{\Psi}^\varepsilon $, such that
\begin{equation}\label{eq:2-14}
{\displaystyle \widehat{\Psi}^\varepsilon\stackrel{\mathrm{w}^\ast}{\rightharpoonup} \widehat{\Psi}^0,\quad {\rm weakly\,\, star \,\,in}\quad L^\infty(0,T; \/\mathbb{H}_0^1(\Omega)),\quad {\rm as}\quad
\varepsilon\rightarrow 0.}
\end{equation}
Here $ \widehat{\Psi}^0\in L^\infty(0,T; \mathbb{H}_0^1(\Omega)) $ is the solution of the following Schr\"{o}dinger equation:
\begin{equation*}
\left\{
\begin{array}{@{}l@{}}
{\displaystyle
-i(\dot{\widehat{\Psi}^0}, v)+a^0(\widehat{\Psi}^0, v)
+\big((\langle V_c\rangle-\widehat{\mathbf{E}}^0\cdot \widehat{\bm \zeta})\widehat{\Psi}^0,v\big)=0,\quad
\forall v\in \mathbb{H}_0^1(\Omega),}\\[2mm]
{\displaystyle \widehat{\Psi}^0(x,0)=0,}
\end{array}
\right.
\end{equation*}
where $ a^0(u,v)=\int_\Omega \hat{a}_{ij}
\frac{\displaystyle\partial u}{\displaystyle\partial x_j}
\frac{\displaystyle\partial \bar{v}}{\displaystyle\partial x_i}\mathnormal{d}\mathbf{x} $,
and $ \widehat{A}=(\hat{a}_{ij}) $ is the homogenized coefficient matrix of $ A(\frac{\displaystyle \mathbf{x}}
{\displaystyle \varepsilon}) $. Following the lines of the proof of Theorem 11.4 in \cite[p.~211-214]{Cio} (see also
the proof of Theorem 12.6 in \cite[p.~231-234]{Cio}),
we can prove that
\begin{equation}\label{eq:2-15}
\begin{array}{@{}l@{}}
{\displaystyle \widehat{\Psi}^\varepsilon\rightarrow \widehat{\Psi}^0,\quad {\rm strongly \,\,\,in}\quad L^2(0,T; \/\mathbb{L}^2(\Omega)),\quad {\rm as}\quad
\varepsilon\rightarrow 0,}\\[2mm]
{\displaystyle A(\frac{\mathbf{x}}{\varepsilon})\nabla\widehat{\Psi}^\varepsilon
 \stackrel{\mathrm{w}}{\rightharpoonup}
 \widehat{A}\nabla\widehat{\Psi}^0,\quad {\rm weakly \,\,\,in}\quad L^2(0,T; \/(\mathbb{L}^2(\Omega))^3),\quad {\rm as}\quad
\varepsilon\rightarrow 0,}
\end{array}
\end{equation}
where $ \mathbb{L}^2(\Omega) $ denotes the Sobolev space of the complex-valued functions.

If there is not the exchange-correlation potential $ V_{xc}(\rho^\varepsilon) $ in $(\ref{eq:1-6})_1 $,
subtracting $(\ref{eq:1-6})_1 $ from (\ref{eq:2-12}) gives
\begin{equation}\label{eq:2-16}
\begin{array}{@{}l@{}}
{\displaystyle i \frac{\partial (\Psi^\varepsilon-\widehat{\Psi}^\varepsilon)}{\partial t}
=-\nabla\cdot \big(A(\frac{\mathbf{x}}{\varepsilon})\nabla (\Psi^\varepsilon-\widehat{\Psi}^\varepsilon)\big)
+\big(V_c(\frac{\mathbf{x}}{\varepsilon})-\mathbf{E}^\varepsilon\cdot \widehat{{\bm\zeta}}\big)
(\Psi^\varepsilon-\widehat{\Psi}^\varepsilon)}\\[2mm]
{\displaystyle \qquad \qquad \qquad \quad+\big((V_c(\frac{\mathbf{x}}{\varepsilon})-\langle
V_c \rangle)-(\mathbf{E}^\varepsilon-\widehat{\mathbf{E}}^0)\cdot \widehat{{\bm\zeta}}\big)\widehat{\Psi}^\varepsilon.}
\end{array}
\end{equation}

Setting $ u^\varepsilon=\Psi^\varepsilon-\widehat{\Psi}^\varepsilon $, the variational form of (\ref{eq:2-16})
is as follows:
\begin{equation}\label{eq:2-17}
\begin{array}{@{}l@{}}
{\displaystyle -i(\dot{u}^\varepsilon, u^\varepsilon)+a^\varepsilon(u^\varepsilon, u^\varepsilon)
+\big((V_c(\frac{\mathbf{x}}{\varepsilon})-\mathbf{E}^\varepsilon\cdot \widehat{{\bm\zeta}})u^\varepsilon,
u^\varepsilon\big)}\\[2mm]
{\displaystyle \quad =-\big([(V_c(\frac{\mathbf{x}}{\varepsilon})-\langle V_c \rangle)-(\mathbf{E}^\varepsilon-\widehat{\mathbf{E}}^0)\cdot \widehat{{\bm\zeta}}]
\widehat{\Psi}^\varepsilon, u^\varepsilon\big).}
\end{array}
\end{equation}

Thanks to $ (A_3) $, $ V_c $ is
bounded.  It follows from (\ref{eq:2-10}) that $ \|\mathbf{E}^\varepsilon\|_{(L^2(\Omega))^3}\leq \|\phi^\varepsilon\|_{H_0^1(\Omega)}\leq C $,
where $ C $ is a constant independent of $ \varepsilon $.
Then, there is a sufficiently large positive number $
\alpha>0 $ such that
\begin{equation}\label{eq:2-18}
{\displaystyle (\alpha+V_c(\frac{\mathbf{x}}{\varepsilon})-\mathbf{E}^\varepsilon\cdot \widehat{{\bm \zeta}})\geq \frac{\alpha}{2}>0.}
\end{equation}
Hence, we take the real part in (\ref{eq:2-17}) and get
\begin{equation}\label{eq:2-19}
\begin{array}{@{}l@{}}
{\displaystyle a^\varepsilon(u^\varepsilon, u^\varepsilon)+\big((\alpha+V_c(\frac{\mathbf{x}}{\varepsilon})-\mathbf{E}^\varepsilon\cdot \widehat{{\bm\zeta}})u^\varepsilon,
u^\varepsilon\big)-(\alpha u^\varepsilon, u^\varepsilon)}\\[2mm]
{\displaystyle \quad =-\mathcal{R}e\big([(V_c(\frac{\mathbf{x}}{\varepsilon})-\langle V_c \rangle)-(\mathbf{E}^\varepsilon-\widehat{\mathbf{E}}^0)\cdot \widehat{{\bm\zeta}}]
\widehat{\Psi}^\varepsilon, u^\varepsilon\big).}
\end{array}
\end{equation}

Due to the presence of the term $ -(\alpha u^\varepsilon, u^\varepsilon) $ on the left
side of (\ref{eq:2-19}), maybe the left side is
not a coercive bilinear form. To overcome this difficulty, we use
the trick of \cite[p.~89]{Monk}. To this end, we assume that $ w^\varepsilon $ is the solution of
the following problem:
\begin{equation}\label{eq:2-20}
\begin{array}{@{}l@{}}
{\displaystyle a^\varepsilon(w^\varepsilon, w^\varepsilon)+\big((\alpha+V_c(\frac{\mathbf{x}}{\varepsilon})-\mathbf{E}^\varepsilon\cdot \widehat{{\bm\zeta}})w^\varepsilon,
w^\varepsilon\big)+(\alpha w^\varepsilon, w^\varepsilon)}\\[2mm]
{\displaystyle \quad =-\mathcal{R}e\big([(V_c(\frac{\mathbf{x}}{\varepsilon})-\langle V_c \rangle)-(\mathbf{E}^\varepsilon-\widehat{\mathbf{E}}^0)\cdot \widehat{{\bm\zeta}}]
\widehat{\Psi}^\varepsilon, w^\varepsilon\big).}
\end{array}
\end{equation}

We thus obtain
\begin{equation*}
{\displaystyle \|w^\varepsilon\|_{L^\infty(0,T;\/ \mathbb{H}_0^1(\Omega))}\leq C\big\{
\|V_c(\frac{\mathbf{x}}{\varepsilon})-\langle V_c \rangle\|_{H^{-1}(\Omega)}
+\|\mathbf{E}^\varepsilon-\widehat{\mathbf{E}}^0\|_{L^2(0,T;\/ H^{-1}(\Omega))}\big\}.}
\end{equation*}

From (\ref{eq:2-10}) and (\ref{eq:2-11}), we have
\begin{equation}\label{eq:2-21}
{\displaystyle \|w^\varepsilon\|_{L^\infty(0,T; \/\mathbb{H}_0^1(\Omega))}
\rightarrow 0,\quad {\rm as }\quad \varepsilon\rightarrow 0.}
\end{equation}

Similarly to (4.18) of \cite[p.~91]{Monk}, we get
$(I+K)u^\varepsilon=w^\varepsilon $,
where $ I $ is an identity operator from $ \mathbb{L}^2(\Omega)\rightarrow \mathbb{L}^2(\Omega) $, and the operator $ K $
is a bounded and compact map from $ \mathbb{L}^2(\Omega)\rightarrow
\mathbb{L}^2(\Omega) $ shown as in (4.15) of \cite[p.~90]{Monk}.
Furthermore, we have $ \|u^\varepsilon\|_{\mathbb{L}^2(\Omega)}\leq C \|w^\varepsilon\|_{\mathbb{L}^2(\Omega)} $,
where $ C $ is a constant independent of $ \varepsilon $.

Combining (\ref{eq:2-19}) and (\ref{eq:2-21}) implies
\begin{equation*}
\begin{array}{@{}l@{}}
{\displaystyle \|u^\varepsilon\|_{L^\infty(0,T;\/ \mathbb{H}_0^1(\Omega))}\leq C\big\{
\|w^\varepsilon\|_{L^\infty(0,T; \/\mathbb{L}^2(\Omega))}+
\|V_c(\frac{\mathbf{x}}{\varepsilon})-\langle V_c \rangle\|_{H^{-1}(\Omega)}}\\[2mm]
{\displaystyle\quad+\|\mathbf{E}^\varepsilon-\widehat{\mathbf{E}}^0\|_{L^2(0,T;\/ H^{-1}(\Omega))}\big\}
\rightarrow 0,\quad {\rm as}\quad \varepsilon \rightarrow 0,}
\end{array}
\end{equation*}
and consequently
\begin{equation}\label{eq:2-22}
{\displaystyle \|\Psi^\varepsilon-\widehat{\Psi}^\varepsilon\|_{L^\infty(0,T; \/\mathbb{H}_0^1(\Omega))}
\rightarrow 0,\quad {\rm as }\quad \varepsilon\rightarrow 0.}
\end{equation}

We recall $ (\ref{eq:1-6})_5 $, and combining (\ref{eq:2-15}) and (\ref{eq:2-22}) gives
\begin{equation}\label{eq:2-23}
{\displaystyle \mathbf{J}_q^\varepsilon \stackrel{\mathrm{w}}{\rightharpoonup}
\widehat{\mathbf{J}}_q^0,\quad {\rm weakly \,\,in}\quad L^2(0,T; \/(L^2(\Omega))^3),\quad {\rm as}\quad
\varepsilon\rightarrow 0.}
\end{equation}

Using Theorem 4.1 in Chapter 7 of \cite{Duv} and following the lines of the proof of Theorem~4.5 in \cite[p.~666]{Ben}(see also
\cite[p.~125]{San}), we prove
\begin{equation}\label{eq:2-24}
\begin{array}{@{}l@{}}
{\displaystyle \mathbf{E}^\varepsilon \stackrel{\mathrm{w}^\ast}{\rightharpoonup}
\widehat{\mathbf{E}}^0,\quad {\rm weakly \,\, star\,\, in}\quad L^\infty(0,T; \/(L^2(\Omega))^3),\quad {\rm as}\quad
\varepsilon\rightarrow 0,}\\[2mm]
{\displaystyle \mathbf{H}^\varepsilon \stackrel{\mathrm{w}^\ast}{\rightharpoonup}
\widehat{\mathbf{H}}^0,\quad {\rm weakly \,\, star\,\, in}\quad L^\infty(0,T;\/ (L^2(\Omega))^3),\quad {\rm as}\quad
\varepsilon\rightarrow 0.}
\end{array}
\end{equation}

Let us turn to the proof of $ \widehat{\phi}^0=\phi^0(\rho^0) $. Combining (\ref{eq:2-15}) and
(\ref{eq:2-22}), for any fixed $ t\in (0,T) $, we have
\begin{equation*}
{\displaystyle \rho^\varepsilon\equiv N |\Psi^\varepsilon|^2 \stackrel{\mathrm{w}}{\rightharpoonup}
N |\widehat{\Psi}^0|^2\equiv \widehat{\rho}^0,\quad {\rm weakly \,\,\,in}\quad L^2(\Omega),\quad {\rm as}\quad
\varepsilon\rightarrow 0,}
\end{equation*}
and consequently
\begin{equation}\label{eq:2-25}
{\displaystyle \|\rho^\varepsilon-\widehat{\rho}^0\|_{H^{-1}(\Omega)}\rightarrow 0,
\quad {\rm as}\quad
\varepsilon\rightarrow 0.}
\end{equation}

For any fixed $ t\in (0,T) $, let $ \widetilde{\phi}^\varepsilon(\mathbf{x},t) $ be the solution of the following
elliptic equation:
\begin{equation}\label{eq:2-26}
\left\{
\begin{array}{lll}
{\displaystyle
-\frac{\partial}{\partial x_i}\big(\eta_{ij}(\frac{x}{\varepsilon})
\frac{\partial \widetilde{\phi}^\varepsilon(\mathbf{x},t)}{\partial x_j}\big)
=\widehat{\rho}^0(\mathbf{x},t),\quad \mathbf{x}\in \Omega,}\\[2mm]
{\displaystyle \widetilde{\phi}^\varepsilon(\mathbf{x},t)=0,
\quad \mathbf{x}\in \partial\Omega,}
\end{array}
\right.
\end{equation}
where $ t $ plays the role of a parameter. As usual we use the convergence result of the homogenization method
for the linear elliptic equations (see, e.g., Theorem~3.1 of
\cite{Ben} or Theorem~6.1 of \cite{Cio}) and obtain
\begin{equation}\label{eq:2-27}
\begin{array}{@{}l@{}}
{\displaystyle\widetilde{\phi}^\varepsilon\rightarrow \widehat{\phi}^0(\widehat{\rho}^0)\quad \hbox{weakly \,\,in}
\,\,\, H^1_0(\Omega),\,\,\,\hbox{as}\,\,\,
\varepsilon\rightarrow 0,}\\[2mm]
{\displaystyle\widetilde{\phi}^\varepsilon\rightarrow \widehat{\phi}^0(\widehat{\rho}^0)\quad \hbox{strongly \,\,in}
\,\,\, L^2(\Omega),\,\,\,\hbox{as}\,\,\,
\varepsilon\rightarrow 0.}
\end{array}
\end{equation}

Subtracting $(\ref{eq:1-6})_4 $ from (\ref{eq:2-26}) and using {\it a priori}
estimates for elliptic equations, we get
\begin{equation}\label{eq:2-28}
{\displaystyle\|\phi^\varepsilon(\rho^\varepsilon)-\widetilde{\phi}^\varepsilon\|_{H^1_0(\Omega)}
\leq C \|\rho^\varepsilon-\widehat{\rho}^0\|_{H^{-1}(\Omega)}\rightarrow 0,
\quad \hbox{as}\,\,\, \varepsilon\rightarrow 0.}
\end{equation}

For any fixed $ t\in (0,T) $, combining (\ref{eq:2-9}), (\ref{eq:2-25}) and (\ref{eq:2-27}) gives
$\widehat{\phi}^0(\mathbf{x},t)=\widehat{\phi}^0(\widehat{\rho}^0(\mathbf{x},t)) $.
On the other hand, we assume that the problem (\ref{eq:1-6}) without the exchange-correlation potential
and the associated homogenized problem (\ref{eq:2-5}) have the unique weak solutions, respectively.
Consequently, the convergence (\ref{eq:2-15}) takes place for the whole sequences.
Therefore, we get $ \phi^0=\phi^0(n^0) $. From this, we have
$ \widehat{\Psi}^0=\Psi^0 $, $ \widehat{\mathbf{E}}^0=\mathbf{E}^0 $ and $ \widehat{\mathbf{H}}^0=\mathbf{H}^0 $.
Therefore, we complete the proof of Theorem~\ref{thm2-1}.\qquad \end{proof}

\begin{corollary}\label{cor2-1}
If there is the exchange-correlation potential in (\ref{eq:1-6}),
which it is Lipschitz continuous and the corresponding Lipschitz
constant is sufficiently small, and other conditions are the same as Theorem~\ref{thm2-1},
then we can derive the similar
convergence results to those of Theorem~\ref{thm2-1}.
\end{corollary}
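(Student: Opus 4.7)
The plan is to parallel the proof of Theorem~\ref{thm2-1}, keeping the exchange-correlation term active throughout and exploiting the Lipschitz smallness to close the final estimate. First, I would introduce the analogue of the auxiliary Schr\"{o}dinger equation (\ref{eq:2-12}) in which $V_{xc}$ is already evaluated at a candidate homogenized density:
\begin{equation*}
i\frac{\partial \widehat{\Psi}^\varepsilon}{\partial t}
= -\nabla\cdot\big(A(\tfrac{\mathbf{x}}{\varepsilon})\nabla \widehat{\Psi}^\varepsilon\big)
 + \big(\langle V_c\rangle - \widehat{\mathbf{E}}^0\cdot\widehat{\bm\zeta} + V_{xc}[\widehat{\rho}^0]\big)\widehat{\Psi}^\varepsilon,
\end{equation*}
with $\widehat{\rho}^0 = N|\widehat{\Psi}^0|^2$ to be determined self-consistently. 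Since $V_{xc}\in L^\infty(\Omega\times\mathbb{R}^3)$ by $({\rm A}_3)$, the energy argument that gave the uniform bound $\|\widehat{\Psi}^\varepsilon\|_{L^\infty(0,T;\mathbb{H}^1_0(\Omega))}\le C$ in Theorem~\ref{thm2-1} goes through unchanged, and the same passage to the limit (Theorem 11.4 of \cite{Cio}) yields $\widehat{\Psi}^\varepsilon\stackrel{\mathrm{w}^\ast}{\rightharpoonup}\widehat{\Psi}^0$ in $L^\infty(0,T;\mathbb{H}^1_0(\Omega))$, where $\widehat{\Psi}^0$ solves the homogenized Schr\"{o}dinger equation with the nonlinear term $V_{xc}[\widehat{\rho}^0]$.

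Next, set $u^\varepsilon = \Psi^\varepsilon - \widehat{\Psi}^\varepsilon$ and subtract. The only new contribution beyond (\ref{eq:2-16}) is
\begin{equation*}
V_{xc}[\rho^\varepsilon]\Psi^\varepsilon - V_{xc}[\widehat{\rho}^0]\widehat{\Psi}^\varepsilon
 = V_{xc}[\rho^\varepsilon]\,u^\varepsilon + \big(V_{xc}[\rho^\varepsilon]-V_{xc}[\widehat{\rho}^0]\big)\widehat{\Psi}^\varepsilon.
\end{equation*}
The first piece is absorbed into the shifted coercive form by enlarging the constant $\alpha$ in (\ref{eq:2-18}) to also dominate $\|V_{xc}\|_\infty$. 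For the second piece I would use the Lipschitz hypothesis
\begin{equation*}
\big|V_{xc}[\rho^\varepsilon]-V_{xc}[\widehat{\rho}^0]\big| \le L\,|\rho^\varepsilon-\widehat{\rho}^0|
\le L N\big(|\Psi^\varepsilon|+|\widehat{\Psi}^\varepsilon|\big)|u^\varepsilon|,
\end{equation*}
together with the Sobolev embedding $\mathbb{H}^1_0(\Omega)\hookrightarrow \mathbb{L}^6(\Omega)$ and the uniform $\mathbb{H}^1_0$-bounds on $\Psi^\varepsilon,\widehat{\Psi}^\varepsilon$ to estimate this new source in $H^{-1}(\Omega)$ by $CL\|u^\varepsilon\|_{\mathbb{H}^1_0(\Omega)}$.

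The main obstacle, and the reason the smallness hypothesis on $L$ is imposed, is that after repeating the Fredholm-type trick (\ref{eq:2-20})--(\ref{eq:2-21}) the estimate for $u^\varepsilon$ becomes
\begin{equation*}
\|u^\varepsilon\|_{L^\infty(0,T;\mathbb{H}^1_0(\Omega))}
\le o_\varepsilon(1) + C\,L\,\|u^\varepsilon\|_{L^\infty(0,T;\mathbb{H}^1_0(\Omega))},
\end{equation*}
where $o_\varepsilon(1)$ collects the terms already shown to vanish in (\ref{eq:2-10}), (\ref{eq:2-11}), (\ref{eq:2-21}) and $C$ depends only on the a~priori bounds for $\Psi^\varepsilon$ and $\widehat{\Psi}^\varepsilon$. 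Choosing $L$ strictly less than $1/C$ lets me absorb the last term to the left-hand side and conclude $\|\Psi^\varepsilon-\widehat{\Psi}^\varepsilon\|_{L^\infty(0,T;\mathbb{H}^1_0(\Omega))}\to 0$.

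Once this strong convergence is in hand, the remainder of the proof of Theorem~\ref{thm2-1} transfers verbatim: the convergence $\mathbf{J}_q^\varepsilon\stackrel{\mathrm{w}}{\rightharpoonup}\widehat{\mathbf{J}}_q^0$ follows as in (\ref{eq:2-23}); the Maxwell limits (\ref{eq:2-24}) are obtained from the same argument based on Theorem 4.1 of \cite{Duv}; and the self-consistency identification $\widehat{\phi}^0=\phi^0(\widehat{\rho}^0)$, hence $\widehat{\Psi}^0=\Psi^0$, $\widehat{\mathbf{E}}^0=\mathbf{E}^0$, $\widehat{\mathbf{H}}^0=\mathbf{H}^0$, is established exactly as in (\ref{eq:2-25})--(\ref{eq:2-28}), using the $H^{-1}$-continuity of the linear Poisson map. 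The uniqueness of the homogenized solution (needed to pass from subsequential to full convergence) is itself guaranteed under the same smallness-of-Lipschitz constant assumption on $V_{xc}$.
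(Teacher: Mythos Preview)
Your strategy is sound and close in spirit to the paper's, but you choose a different auxiliary problem. The paper's equation~(\ref{eq:2-29}) evaluates the exchange--correlation potential at $\widehat{\rho}^\varepsilon=N|\widehat{\Psi}^\varepsilon|^2$, so the auxiliary Schr\"{o}dinger equation is itself \emph{nonlinear}; the small Lipschitz constant is invoked first to guarantee convergence of the self-consistent (SCF) iteration for that auxiliary problem, after which the difference $V_{xc}[\rho^\varepsilon]-V_{xc}[\widehat{\rho}^\varepsilon]$ is genuinely controlled by $|u^\varepsilon|$ and absorbed. Your variant fixes the potential at $V_{xc}[\widehat{\rho}^0]$, keeping the auxiliary problem linear so that the homogenization argument of Theorem~11.4 in~\cite{Cio} applies verbatim; the Lipschitz smallness then enters only at the final absorption step. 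Either choice closes the argument.

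One slip to correct: with your choice, $\widehat{\rho}^0=N|\widehat{\Psi}^0|^2$ is built from $\widehat{\Psi}^0$, not from $\widehat{\Psi}^\varepsilon$, so the bound
\begin{equation*}
|\rho^\varepsilon-\widehat{\rho}^0|\le N\bigl(|\Psi^\varepsilon|+|\widehat{\Psi}^\varepsilon|\bigr)\,|u^\varepsilon|
\end{equation*}
is not valid as written; the right-hand side should contain $|\Psi^\varepsilon-\widehat{\Psi}^0|$. Inserting $\pm N|\widehat{\Psi}^\varepsilon|^2$ splits this into a piece controlled by $|u^\varepsilon|$ and a piece controlled by $|\widehat{\Psi}^\varepsilon-\widehat{\Psi}^0|$, the latter being an $o_\varepsilon(1)$ term by the strong $L^2$ convergence of the linear auxiliary problem. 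Hence your absorption inequality $\|u^\varepsilon\|\le o_\varepsilon(1)+CL\,\|u^\varepsilon\|$ survives, and the remainder of the proof (the Maxwell limits and the self-consistency identification) carries over exactly as you outline.
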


In fact, here we consider the following modified Sch\"{o}dinger
equation:
\begin{equation}\label{eq:2-29}
{\displaystyle i\frac{\partial \widehat{\Psi}^\varepsilon(\mathbf{x},t)}{\partial t}
=-\nabla\cdot \big(A(\frac{\mathbf{x}}{\varepsilon})\nabla
\widehat{\Psi}^\varepsilon(\mathbf{x},t)\big)
+\big(\langle V_c\rangle-\widehat{\mathbf{E}}^0\cdot \widehat{\bm \zeta}
+V_{xc}(\widehat{\rho}^\varepsilon)
\big)
\widehat{\Psi}^\varepsilon(\mathbf{x},t),}
\end{equation}
where $ \widehat{\rho}^\varepsilon=N|\widehat{\Psi}^\varepsilon|^2 $.
If the exchange-correlation potential in (\ref{eq:1-6}) is Lipschitz continuous and the corresponding Lipschitz
constant is sufficiently small, then we can prove that the self-consistent iterative method (SCF) is convergent.
Following the lines of the proofs of (\ref{eq:2-13})-(\ref{eq:2-28}), we can complete the proof
of Corollary~\ref{cor2-1}.

\begin{rem}\label{rem2-3}
If there is the generic exchange-correlation potential (see, e.g., \cite[p.~152-169]{Mar}),
then the convergence result of the
homogenization method for the problem
(\ref{eq:1-6}) is not known to authors yet.
\end{rem}

\section{The multiscale asymptotic method and the main convergence theorems}\label{sec-3}

Numerous numerical results have shown that, if $ \varepsilon>0 $ is not sufficiently small, the accuracy
of the homogenization method may not be satisfactory (see, e.g., \cite{Cao-1, Cao-3, Cao-4, Zhang-L, Zhang-1}).
Hence one hopes to seek the multiscale asymptotic methods and the
associated numerical algorithms in the real applications. In this section, we first formally present
the multiscale asymptotic expansions of the solution
of (\ref{eq:1-6}), and then we derive the convergence result of the multiscale method.

\subsection{The multiscale asymptotic expansions}

Let $ {\bm \xi}=\varepsilon^{-1}\mathbf{x} $, for the coefficient matrices $ A({\bm \xi})=(a_{ij}({\bm \xi})) $,
$ \eta({\bm \xi})=(\eta_{ij}({\bm \xi})) $
and $ \mu({\bm \xi})=(\mu_{ij}({\bm \xi})) $,
we will define three sets of cells functions:
$ \theta_k^a({\bm \xi}) $, $\theta_{kl}^a({\bm \xi})$;
$\theta_k^\eta({\bm \xi}) $, $ \theta_{kl}^\eta({\bm \xi}) $, $ \Theta_1^\eta({\bm \xi}) $, $ \Theta_2^\eta({\bm \xi})$;
$\theta_k^\mu({\bm \xi}) $, $\theta_{kl}^\mu({\bm \xi}) $, $\Theta_1^\mu({\bm \xi}) $, $ \Theta_2^\mu({\bm \xi}) $,
$ k, l=1,2,3 $,
where $ \theta_k^a({\bm \xi}) $, $\theta_{kl}^a({\bm \xi})$, $ \theta_k^\eta({\bm \xi}) $,
$\theta_{kl}^\eta({\bm \xi})$, $ \theta_k^\mu({\bm \xi}) $,
$\theta_{kl}^\mu({\bm \xi}) $ are scalar cell functions and
$\Theta_1^\eta({\bm \xi}) $, $ \Theta_2^\eta({\bm \xi})$ , $\Theta_1^\mu({\bm \xi}) $,
$ \Theta_2^\mu({\bm \xi})$ are matrix-valued cell functions defined in the
unit cell $ Q $. The scalar cells functions  $ \theta_k^a({\bm \xi}) $, $\theta_{kl}^a({\bm \xi})$,
$ \theta_k^\eta({\bm \xi}) $, $\theta_{kl}^\eta({\bm \xi})$, $
\theta_k^\mu({\bm \xi})$, $\theta_{kl}^\mu({\bm \xi}) $ are defined in turn
\begin{equation}\label{eq:3-1}
\left\{
\begin{array}{@{}l@{}}
{\displaystyle \frac{\partial}{\partial \xi_i}\big(
a_{ij}({\bm \xi})\frac{\partial \theta_k^a({\bm\xi})}{\partial \xi_j}\big)
=-\frac{\partial a_{ik}({\bm\xi})}{\partial \xi_i},\quad {\bm\xi}\in Q,}\\[2mm]
{\displaystyle\theta_k^a({\bm\xi})=0,\quad {\bm\xi}\in \partial Q,}
\end{array}
\right.
\end{equation}
\begin{equation}\label{eq:3-2}
\left\{
\begin{array}{@{}l@{}}
{\displaystyle \frac{\partial}{\partial \xi_i}\big(
a_{ij}({\bm\xi})\frac{\partial \theta_{kl}^a({\bm\xi})}{\partial \xi_j}\big)
=-\frac{\partial \big(a_{ik}({\bm\xi})\theta_l^a({\bm\xi})\big)}{\partial \xi_i}}\\[3mm]
{\displaystyle \qquad -a_{kj}({\bm\xi})\frac{\partial \theta_l^a({\bm\xi})}
{\partial \xi_j}-a_{kl}({\bm\xi})+\hat{a}_{kl},\quad {\bm\xi}\in Q,}\\
{\displaystyle \theta_{kl}^a({\bm\xi})=0,\quad {\bm\xi}\in \partial Q},
\end{array}
\right.
\end{equation}
\begin{equation}\label{eq:3-3}
\left\{
\begin{array}{@{}l@{}}
{\displaystyle \frac{\partial}{\partial \xi_i}\big(
\eta_{ij}({\bm\xi})\frac{\partial \theta_k^\eta({\bm\xi})}{\partial \xi_j}\big)
=-\frac{\partial \eta_{ik}({\bm\xi})}{\partial \xi_i},\quad {\bm\xi}\in Q,}\\[2mm]
\theta_k^\eta({\bm\xi})=0,\quad {\bm\xi}\in \partial Q,
\end{array}
\right.
\end{equation}
\begin{equation}\label{eq:3-4}
\left\{
\begin{array}{@{}l@{}}
{\displaystyle \frac{\partial}{\partial \xi_i}\big(
\eta_{ij}({\bm\xi})\frac{\partial \theta_{kl}^\eta({\bm\xi})}{\partial \xi_j}\big)
=-\frac{\partial \big(\eta_{ik}({\bm\xi})\theta_l^\eta({\bm\xi})\big)}{\partial \xi_i}}\\[3mm]
{\displaystyle \qquad -\eta_{kj}({\bm\xi})\frac{\partial \theta_l^\eta({\bm\xi})}
{\partial \xi_j}-\eta_{kl}({\bm\xi})+\hat{\eta}_{kl},\quad {\bm\xi}\in Q,}\\
{\displaystyle \theta_{kl}^\eta({\bm\xi})=0,\quad {\bm\xi}\in \partial Q},
\end{array}
\right.
\end{equation}
\begin{equation}\label{eq:3-5}
\left\{
\begin{array}{@{}l@{}}
{\displaystyle \frac{\partial}{\partial \xi_i}\big(
\mu_{ij}({\bm\xi})\frac{\partial \theta_k^\mu({\bm\xi})}{\partial \xi_j}\big)
=-\frac{\partial \mu_{ik}({\bm\xi})}{\partial \xi_i},\quad {\bm\xi}\in Q,}\\[2mm]
{\displaystyle \theta_k^\mu({\bm\xi})=0,\quad {\bm\xi}\in \partial Q,}
\end{array}
\right.
\end{equation}
and
\begin{equation}\label{eq:3-6}
\left\{
\begin{array}{@{}l@{}}
{\displaystyle \frac{\partial}{\partial \xi_i}\big(
\mu_{ij}({\bm\xi})\frac{\partial \theta_{kl}^\mu({\bm\xi})}{\partial \xi_j}\big)
=-\frac{\partial \big(\mu_{ik}({\bm\xi})\theta_l^\mu({\bm\xi})\big)}{\partial \xi_i}}\\
{\displaystyle \qquad -\mu_{kj}({\bm\xi})\frac{\partial \theta_l^\mu({\bm\xi})}
{\partial \xi_j}-\mu_{kl}({\bm\xi})+\hat{\mu}_{kl},\quad {\bm\xi}\in Q,}\\[3mm]
{\displaystyle \theta_{kl}^\mu({\bm\xi})=0,\quad {\bm\xi}\in \partial Q},
\end{array}
\right.
\end{equation}
where the homogenized coefficient matrices $ \widehat{A}=(\hat{a}_{kl}) $,
$ \widehat{\eta}=(\hat{\eta}_{kl}) $ and
$ \widehat{\mu}=(\hat{\mu}_{kl}) $ are similarly given in (\ref{eq:2-4}).

\begin{rem}\label{rem3-1}
Under the assumptions $({\rm A}_1)$--$({\rm A}_3)$, the existence and uniqueness of
the solutions for the cell problems (\ref{eq:3-1})-(\ref{eq:3-6}) can be established
based upon Lax-Milgram lemma. It should be mentioned the problems (\ref{eq:3-1})-(\ref{eq:3-6})
require the homogeneous Dirichlet's boundary conditions instead of the usual
periodic boundary conditions.
\end{rem}

Next we give the definitions of the matrix-valued cell functions $
\mathbf{\Theta}_1^\eta({\bm\xi})$, $ \mathbf{\Theta}_2^\eta({\bm\xi})$, $
\mathbf{\Theta}_1^\mu({\bm\xi})$ and $ \mathbf{\Theta}_2^\mu({\bm\xi})$.
Let $ \eta^{-1}({\bm\xi}) $ and $ \mu^{-1}({\bm\xi}) $ denote the inverse matrices of
$ \eta({\bm\xi}) $ and $ \mu({\bm\xi}) $, respectively.
We define $ \mathbf{\Theta}_{1,p}^\eta({\bm\xi})$, $
\mathbf{\Theta}_{1,p}^\mu({\bm\xi})$, $ p=1,2,3 $ in the following ways:
\begin{equation}\label{eq:3-7}
\left\{
\begin{array}{l@{}}
{\displaystyle\mathbf{curl}_{{\bm\xi}}(\eta^{-1}({\bm\xi})
\mathbf{curl}_{{\bm\xi}}\Theta^{\eta}_{1,p}({\bm\xi}))=
-\mathbf{curl}_{{\bm\xi}}(\eta^{-1}({\bm\xi})\mathbf{e}_{p}),\quad {\bm\xi} \in Q,}\\[2mm]
{\displaystyle\nabla_{{\bm\xi}}\cdot\Theta^{\eta}_{1,p}({\bm\xi})=0,\quad {\bm\xi} \in Q,}\\[2mm]
{\displaystyle\Theta^{\eta}_{1,p}({\bm\xi})\times {\bm\nu}=0,\quad {\bm\xi}
\in\partial Q,\quad p=1,2,3,}
\end{array}
\right.
\end{equation}
\begin{equation}\label{eq:3-8}
\left\{
\begin{array}{l@{}}
{\displaystyle\mathbf{curl}_{{\bm\xi}}(\mu^{-1}({\bm\xi})
\mathbf{curl}_{{\bm\xi}}\Theta^{\mu}_{1,p}({\bm\xi}))=
-\mathbf{curl}_{{\bm\xi}}(\mu^{-1}({\bm\xi})\mathbf{e}_{p}),\quad {\bm\xi} \in Q,}\\[2mm]
{\displaystyle\nabla_{{\bm\xi}}\cdot\Theta^{\mu}_{1,p}({\bm\xi})=0,\quad {\bm\xi} \in Q,}\\[2mm]
{\displaystyle\Theta^{\mu}_{1,p}({\bm\xi})\times {\bm\nu}=0,\quad {\bm\xi}
\in\partial Q,\quad p=1,2,3,}
\end{array}
\right.
\end{equation}
where $ \Theta_{1,p}^{\eta}({\bm\xi})$ and $ \Theta_{1,p}^{\mu}({\bm\xi})$,
$p=1,2,3 $ are the vector-valued functions,
${\bm\nu}=(\nu_1,\nu_2,\nu_3) $ is the outward unit normal to $ \partial Q $,
 $ \mathbf{e}_1=\{1,0,0\}^T$,
$\mathbf{e}_2=\{0,1,0\}^T$, $\mathbf{e}_3=\{0,0,1\}^T $, $
\mathbf{a}^T $ denotes the transpose of a vector $ \mathbf{a} $. Let
\begin{equation*}
{\displaystyle\Theta_{1}^{\eta}(\mathbf{\xi})=(\Theta^{\eta}_{1,1}(\mathbf{\xi}),
\Theta^{\eta}_{1,2}(\mathbf{\xi}),\Theta^{\eta}_{1,3}(\mathbf{\xi})),\quad
\Theta_{1}^{\mu}(\mathbf{\xi})=(\Theta^{\mu}_{1,1}(\mathbf{\xi}),
\Theta^{\mu}_{1,2}(\mathbf{\xi}),\Theta^{\mu}_{1,3}(\mathbf{\xi})).}
\end{equation*}

\begin{rem}\label{rem3-2}
The definitions of\/ $\Theta_{1,p}^\eta({\bm\xi})$,
$\Theta_{1,p}^\mu({\bm\xi})$, $ p=1,2,3 $
in {\rm(\ref{eq:3-7})} and {\rm(\ref{eq:3-8})} are similar to {\rm(4.128)} of
{\rm\cite[p.~663]{Ben}}.
However, the essential difference is that we take a perfect
conductor boundary condition instead of the periodic boundary
condition of\/~{\rm\cite{Ben}}. Similarly to\/ {\rm(4.128)} of {\rm\cite[p.~663]{Ben}},
under the assumptions $({\rm A}_1)$--$({\rm A}_3)$, the existence and uniqueness of
 problems {\rm(\ref{eq:3-7})} and {\rm(\ref{eq:3-8})} can be established based upon Lax-Milgram lemma.
\end{rem}

Following the idea of \cite{Cao-1}, we define the second-order vector-valued cell functions
$ \Theta_{2,p}^{\eta}({\bm\xi})$ and $ \Theta_{2,p}^{\mu}({\bm\xi})$ as follows:
\begin{equation}\label{eq:3-9}
\left\{
\begin{array}{l@{}}
{\displaystyle\mathbf{curl}_{{\bm\xi}}(\eta^{-1}({\bm\xi})\mathbf{curl}_{{\bm\xi}}
\Theta^{\eta}_{2,p}({\bm\xi}))=
-\mathbf{curl}_{{\bm\xi}}(\eta^{-1}({\bm\xi})\Theta^{\eta}_{1,p}({\bm\xi}))}\\[2mm]
{\displaystyle\quad-
\eta^{-1}({\bm\xi})\mathbf{curl}_{{\bm\xi}}\Theta^{\eta}_{1,p}({\bm\xi})
-\eta^{-1}({\bm\xi})\mathbf{e}_{p}+
\widehat{\eta}^{-1}\mathbf{e}_{p}+\nabla_{{\bm\xi}} \zeta_{2,p}^{\eta}({\bm\xi}),\quad {\bm\xi} \in Q,}\\[2mm]
{\displaystyle\nabla_{{\bm\xi}}\cdot\Theta^{\eta}_{2,p}({\bm\xi})=0,\quad {\bm\xi} \in Q,}\\[2mm]
{\displaystyle\Theta^{\eta}_{2,p}({\bm\xi})\times {\bm\nu}=0,\quad {\bm\xi} \in
\partial Q, \quad p=1,2,3,}
\end{array}
\right.
\end{equation}
and
\begin{equation}\label{eq:3-10}
\left\{
\begin{array}{l@{}}
{\displaystyle\mathbf{curl}_{{\bm\xi}}(\mu^{-1}({\bm\xi})\mathbf{curl}_{{\bm\xi}}
\Theta^{\mu}_{2,p}({\bm\xi}))=
-\mathbf{curl}_{{\bm\xi}}(\mu^{-1}({\bm\xi})\Theta^{\mu}_{1,p}({\bm\xi}))}\\[2mm]
{\displaystyle\quad-
\mu^{-1}({\bm\xi})\mathbf{curl}_{{\bm\xi}}\Theta^{\mu}_{1,p}({\bm\xi})
-\mu^{-1}({\bm\xi})\mathbf{e}_{p}+
\widehat{\mu}^{-1}\mathbf{e}_{p}+\nabla_{{\bm\xi}} \zeta_{2,p}^{\mu}({\bm\xi}),\quad {\bm\xi} \in Q,}\\[2mm]
{\displaystyle\nabla_{{\bm\xi}}\cdot\Theta^{\mu}_{2,p}({\bm\xi})=0,\quad {\bm\xi} \in Q,}\\[2mm]
{\displaystyle\Theta^{\mu}_{2,p}({\bm\xi})\times {\bm\nu}=0,\quad {\bm\xi} \in
\partial Q, \quad p=1,2,3.}
\end{array}
\right.
\end{equation}

By using (11.46) of \cite[p.~145]{Ben}, the homogenized coefficient
matrices $ \widehat{\eta}^{-1} $ and $ \widehat{\mu}^{-1} $ are calculated by
\begin{equation}\label{eq:3-11}
\begin{array}{l@{}}
{\displaystyle \widehat{\eta}^{-1}=\mathcal{M}\Bigl(\eta^{-1}({\bm\xi})
 +\eta^{-1}({\bm\xi})\,\mathbf{curl}_{{\bm\xi}}\,
\Theta_1^{\eta}({\bm\xi})\Bigr),}\\[2mm]
{\displaystyle\widehat{\mu}^{-1}=\mathcal{M}\Bigl(\mu^{-1}({\bm\xi})
 +\mu^{-1}({\bm\xi})\,\mathbf{curl}_{{\bm\xi}}\,
\Theta_1^{\mu}({\bm\xi})\Bigr),}
\end{array}
\end{equation}
where the matrix-valued cell functions $\Theta_1^\eta({\bm\xi})=(\Theta^{\eta}_{1,1}({\bm\xi}) $, $\Theta^{\eta}_{1,2}({\bm\xi}) $, $
\Theta^{\eta}_{1,3}({\bm\xi}))$
and
$\Theta_1^\mu({\bm\xi})=(\Theta^{\mu}_{1,1}({\bm\xi}),\Theta^{\mu}_{1,2}({\bm\xi}),
\Theta^{\mu}_{1,3}({\bm\xi}))$
are defined in (\ref{eq:3-7}) and (\ref{eq:3-8}), respectively,
$ \mathcal{M}v=\int_Q v({\bm\xi}) \mathnormal{d}{\bm\xi} $.
The functions $ \zeta_{2,p}^{\eta}({\bm\xi}) $ and $ \zeta_{2,p}^{\mu}({\bm\xi}) $,
$p=1,2,3 $ in (\ref{eq:3-9}) and (\ref{eq:3-10}) are respectively the solutions of the following elliptic
equations:
\begin{equation}\label{eq:3-12}
\left\{
\begin{array}{l@{}}
{\displaystyle-\Delta_{{\bm\xi}}
\zeta_{2,p}^{\eta}({\bm\xi})=\nabla_{{\bm\xi}}\cdot
\widetilde{G}^\eta({\bm\xi}), \quad {\bm\xi}\in Q,}\\[2mm]
{\displaystyle\zeta_{2,p}^{\eta}({\bm\xi})=0, \quad {\bm\xi}\in \partial Q,}
\end{array}
\right.
\end{equation}
and
\begin{equation}\label{eq:3-13}
\left\{
\begin{array}{l@{}}
{\displaystyle-\Delta_{{\bm\xi}}
\zeta_{2,p}^{\mu}({\bm\xi})=\nabla_{{\bm\xi}}\cdot
\widetilde{G}^\mu({\bm\xi}), \quad {\bm\xi}\in Q,}\\[2mm]
{\displaystyle\zeta_{2,p}^{\mu}({\bm\xi})=0, \quad {\bm\xi}\in \partial Q,}
\end{array}
\right.
\end{equation}
where $ \nabla_{{\bm\xi}}\cdot=div_{{\bm\xi}} $, and
\begin{equation}\label{eq:3-14}
\begin{array}{l@{}}
{\displaystyle\widetilde{G}^\eta({\bm\xi})=-\eta^{-1}({\bm\xi})\mathbf{curl}_{{\bm\xi}}\,\Theta^{\eta}_{1,p}(\xi)
-\eta^{-1}({\bm\xi})\mathbf{e}_{p}+\widehat{\eta}^{-1}\mathbf{e}_{p},}\\[2mm]
{\displaystyle\widetilde{G}^\mu({\bm\xi})=-\mu^{-1}({\bm\xi})\mathbf{curl}_{{\bm\xi}}\,\Theta^{\mu}_{1,p}({\bm\xi})
-\mu^{-1}({\bm\xi})\mathbf{e}_{p}+\widehat{\mu}^{-1}\mathbf{e}_{p}.}
\end{array}
\end{equation}
It can be verified that
\begin{equation}\label{eq:3-15}
{\displaystyle\nabla_{{\bm\xi}}\cdot(\widetilde{G}^\eta({\bm\xi})+\nabla_{{\bm\xi}}
\zeta_{2,p}^{\eta}({\bm\xi}))=0,\quad
\nabla_{{\bm\xi}}\cdot(\widetilde{G}^\mu({\bm\xi})+\nabla_{{\bm\xi}}
\zeta_{2,p}^{\mu}({\bm\xi}))=0,}
\end{equation}
and
\begin{equation}\label{eq:3-16}
{\displaystyle\zeta_{2,p}^{\eta},\quad \zeta_{2,p}^{\mu}\in H^2(Q)\cap H_0^1(Q).}
\end{equation}

Let the matrix-valued functions
$\Theta_{2}^\eta({\bm\xi})=(\Theta^{\eta}_{2,1}({\bm\xi}),\Theta^{\eta}_{2,2}
({\bm\xi}),\Theta^{\eta}_{2,3}({\bm\xi}))$ and
$\Theta_{2}^\mu({\bm\xi})=(\Theta^{\mu}_{2,1}({\bm\xi}),\Theta^{\mu}_{2,2}
({\bm\xi}),\Theta^{\mu}_{2,3}({\bm\xi}))$. Hence, we define the first-order and the second-order multiscale
asymptotic expansions of the solution for the problem
(\ref{eq:1-6}) as follows:
\begin{equation}\label{eq:3-17}
\begin{array}{l@{}}
{\displaystyle \Psi^{\varepsilon}_{1}(\mathbf{x},t)=\Psi^0(\mathbf{x},t)+\varepsilon \theta_k^a({\bm \xi})
\frac{\partial \Psi^0(\mathbf{x},t)}{\partial x_k},}\\[2mm]
{\displaystyle \Psi^{\varepsilon}_{2}(\mathbf{x},t)=\Psi^0(\mathbf{x},t)+\varepsilon
\theta_k^a({\bm \xi})
\frac{\partial \Psi^0(\mathbf{x},t)}{\partial x_k}+\varepsilon^2
\theta_{kl}^a({\bm \xi})
\frac{\partial^2 \Psi^0(\mathbf{x},t)}{\partial x_k \partial x_l},}
\end{array}
\end{equation}
\begin{equation}\label{eq:3-18}
\begin{array}{l@{}}
{\displaystyle \mathbf{E}^{\varepsilon,(1)}(\mathbf{x},t)=\mathbf{E}^{0}(\mathbf{x},t)
+\varepsilon \nabla\big(\theta_k^\eta({\bm\xi})E_{k}^0(\mathbf{x},t)\big)
-\varepsilon\mathbf{\Theta}_1^\mu({\bm\xi})
\widehat{\mu}\frac{\partial \mathbf{H}^0(\mathbf{x},t)}{\partial t},}\\[2mm]
{\displaystyle \mathbf{E}^{\varepsilon, (2)}(\mathbf{x},t)=\mathbf{E}^{0}(\mathbf{x},t)
+\varepsilon \nabla\Big(\theta_k^\eta({\bm\xi})E_{k}^0(\mathbf{x},t)+
\varepsilon \theta_{kl}^\eta({\bm\xi})\frac{\partial E_{k}^0(\mathbf{x},t)}
{\partial x_l}\Big)}\\[2mm]
{\displaystyle \quad -\varepsilon\mathbf{\Theta}_1^\mu({\bm\xi})
\widehat{\mu}\frac{\partial \mathbf{H}^0(\mathbf{x},t)}{\partial t}
-\varepsilon^2 \mathbf{\Theta}_2^\mu({\bm\xi})
\mathbf{curl_x}\big(\widehat{\mu}\frac{\partial \mathbf{H}^0(\mathbf{x},t)}{\partial t}\big),}
\end{array}
\end{equation}
\begin{equation}\label{eq:3-19}
\begin{array}{l@{}}
{\displaystyle \mathbf{H}^{\varepsilon, (1)}(\mathbf{x},t)=\mathbf{H}^{0}(\mathbf{x},t)
+\varepsilon \nabla\big(\theta_k^\mu({\bm\xi})H_{k}^0(\mathbf{x},t)\big)
+\varepsilon\mathbf{\Theta}_1^\eta({\bm\xi})
\widehat{\eta}\frac{\partial \mathbf{E}^0(\mathbf{x},t)}{\partial t},}\\[2mm]
{\displaystyle \mathbf{H}^{\varepsilon, (2)}(\mathbf{x},t)=\mathbf{H}^{0}(\mathbf{x},t)
+\varepsilon \nabla\Big(\theta_k^\mu({\bm\xi})H_{k}^0(\mathbf{x},t)+
\varepsilon \theta_{kl}^\mu({\bm\xi})\frac{\partial H_{k}^0(\mathbf{x},t)}
{\partial x_l}\Big)}\\[2mm]
{\displaystyle \quad +\varepsilon\mathbf{\Theta}_1^\eta({\bm\xi})
\widehat{\eta}\frac{\partial \mathbf{E}^0(\mathbf{x},t)}{\partial t}
+\varepsilon^2 \mathbf{\Theta}_2^\eta({\bm\xi})
\mathbf{curl_x}\big(\widehat{\eta}\frac{\partial \mathbf{E}^0(\mathbf{x},t)}{\partial t}\big),}
\end{array}
\end{equation}
where  $
(\Psi^0(\mathbf{x},t), \mathbf{E}^0(\mathbf{x},t), \mathbf{H}^0(\mathbf{x},t)) $ is the
solution of the homogenized problem
{\rm(\ref{eq:2-5})}, the scalar cells
functions $ \theta_k^a({\bm\xi}) $, $\theta_{kl}^a({\bm\xi})$,
$ \theta_k^\eta({\bm\xi}) $, $\theta_{kl}^\eta({\bm\xi})$, $
\theta_k^\mu({\bm\xi}) $ and $\theta_{kl}^\mu({\bm\xi}) $ are defined in
{\rm(\ref{eq:3-1})}-{\rm(\ref{eq:3-6})}, respectively; the
matrix-valued cell functions $ \mathbf{\Theta}_1^\eta({\bm\xi})$, $
\mathbf{\Theta}_2^\eta({\bm\xi})$, $ \mathbf{\Theta}_1^\mu({\bm\xi})$ and $
\mathbf{\Theta}_2^\mu({\bm\xi})$ have been defined in
{\rm(\ref{eq:3-7})}-{\rm(\ref{eq:3-8})},
{\rm(\ref{eq:3-9})}-{\rm(\ref{eq:3-10})}, respectively.
The homogenized coefficient matrices $ \widehat{A}=(\hat{a}_{ij}) $, $ \widehat{\eta}=(\hat{\eta}_{ij}) $
and $ \widehat{\mu}=(\hat{\mu}_{ij}) $ have been given
in (\ref{eq:2-4}).

In order to derive the convergence results for the multiscale
asymptotic expansions {\rm(\ref{eq:3-17})}-{\rm(\ref{eq:3-19})}, we
need to impose the conditions on the coefficient matrices
$ (a_{ij}({\bm \xi})) $,
$(\eta_{ij}({\bm\xi}))$ and $(\mu_{ij}({\bm\xi}))$.

$({\rm\bf H}_1)$ $ A({\bm \xi}) $, $\eta(\mathbf{\xi}) $ and  $\mu(\mathbf{\xi}) $ are all diagonal matrices,
 i.e.
 \begin{equation*}
 \begin{array}{l@{}}
{\displaystyle   A({\bm\xi})={\rm diag}(a_{11}({\bm\xi}), a_{22}({\bm\xi}), a_{33}({\bm\xi})), \quad
\eta({\bm\xi})={\rm diag}(\eta_{11}({\bm\xi}),\eta_{22}({\bm\xi}),\eta_{33}({\bm\xi})),}\\[2mm]
{\displaystyle \mu({\bm\xi})={\rm diag}(\mu_{11}({\bm\xi}),\mu_{22}({\bm\xi}),\mu_{33}({\bm\xi})).}
\end{array}
\end{equation*}

$({\rm \bf H}_2)$  $a_{kk}({\bm\xi})$, $\eta_{kk}({\bm\xi})$,  $\mu_{kk}({\bm\xi})$, $ k=1,2,3,$ are symmetric
with respect to the middleplane $\Delta_{k}$ of $ Q=(0,1)^{3} $,
where $ \Delta_k$,  $k=1,2 $, are illustrated in Figure \ref{f2}(a) in the
two dimensional case.

\begin{figure}[t!]
\centering
{\tiny(a)}\includegraphics[width=5.5cm,height=5.5cm]{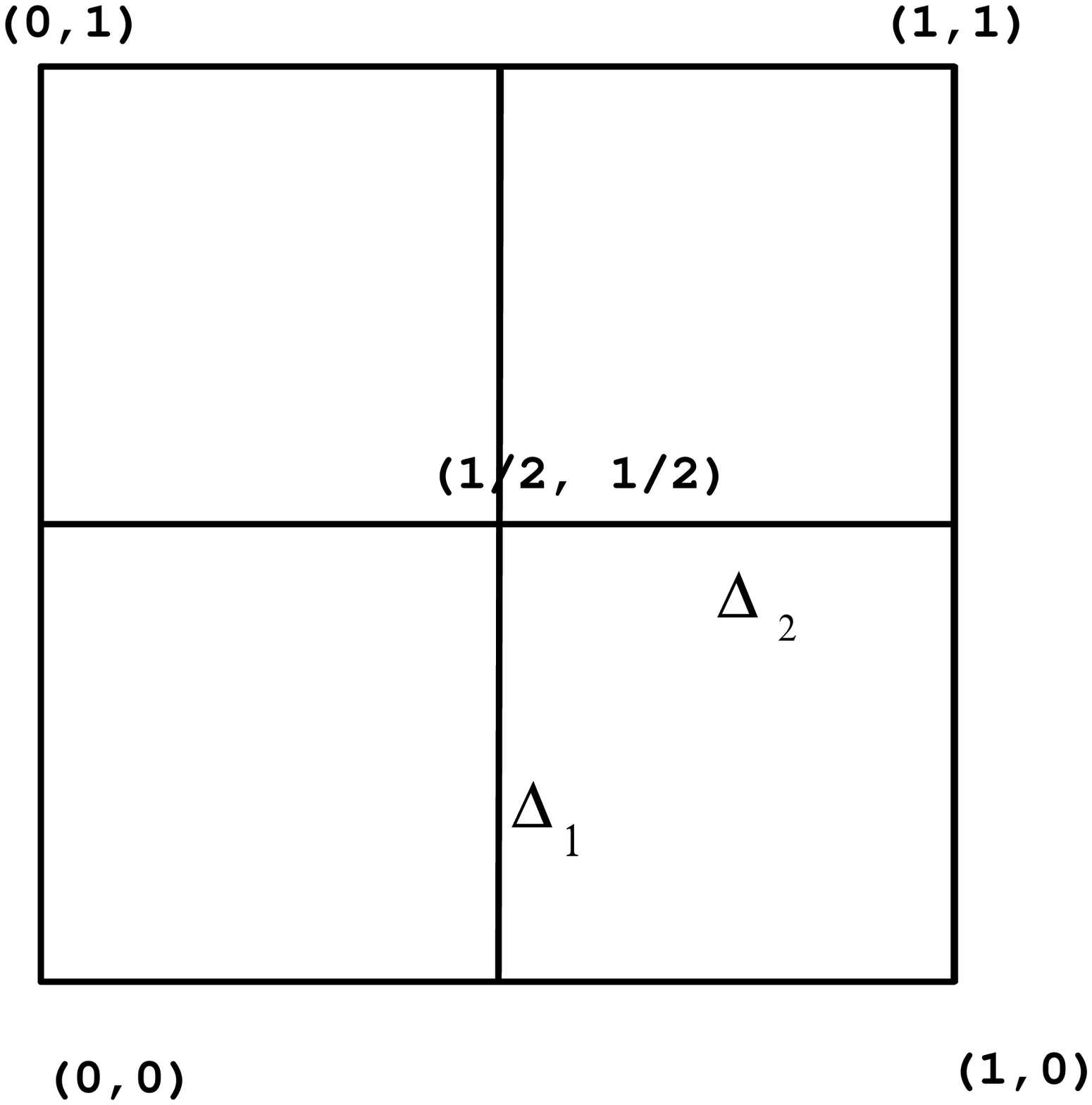}~{\tiny(b)}
\includegraphics[width=5.5cm,height=5.5cm]{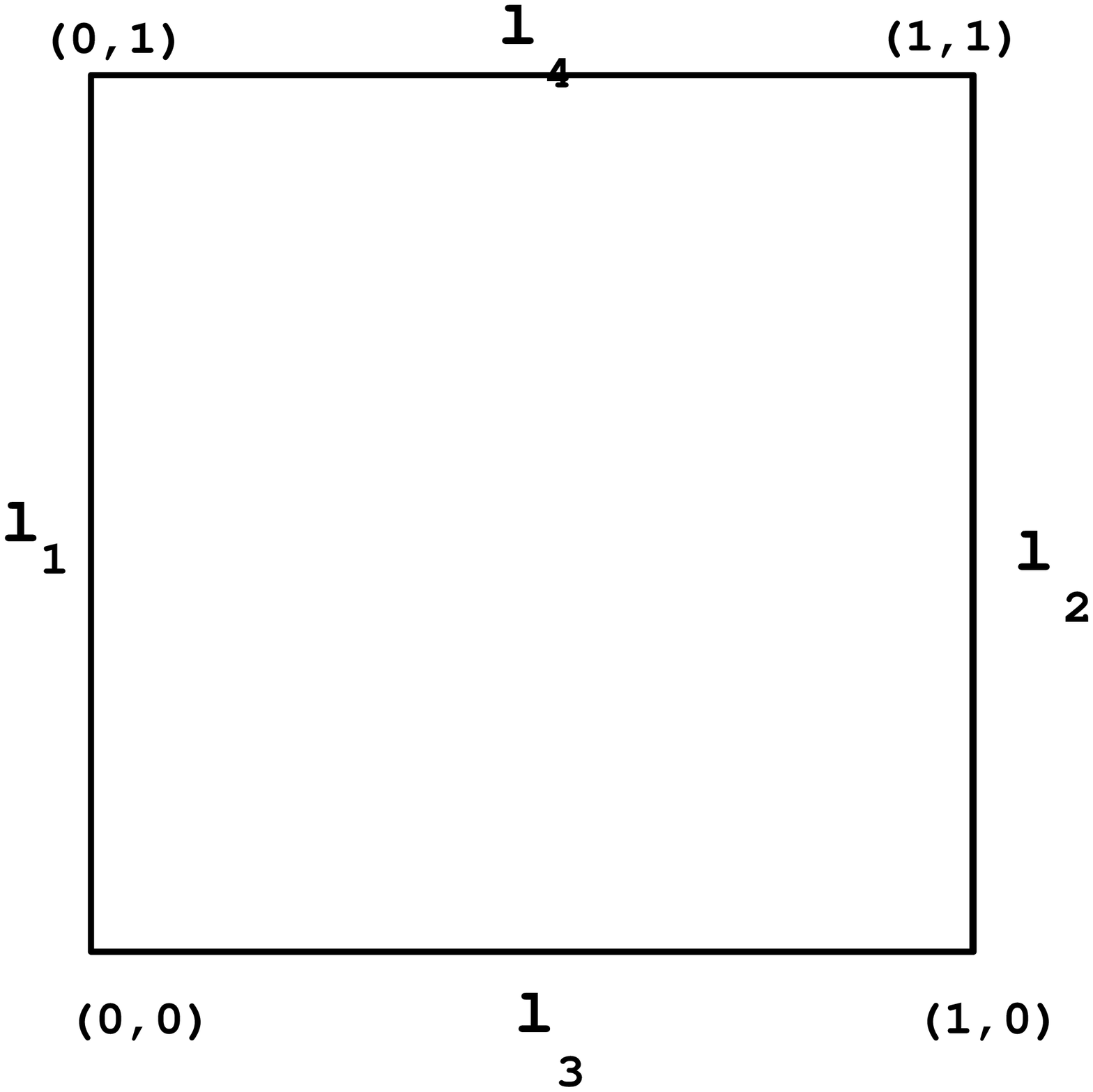}
\caption{{\rm(a)}~The symmetry of $ Q $.\/  {\rm(b)}~The sides of~$Q$.}\label{f2}
\end{figure}

\begin{rem}\label{rem3-3}
The condition~$ ({\rm H}_2) $ indicates that composite
materials satisfy geometric symmetric properties in a periodic
microstructure.
\end{rem}

\begin{lemma}\label{lem3-1}
(see Proposition~2.5 of \cite{Cao-3}, see also \cite{Cao-1})
Let the scalar cells functions $ \theta_k^a({\bm \xi}) $, $\theta_{kl}^a({\bm \xi})$,
$ \theta_k^\eta({\bm\xi}) $, $\theta_{kl}^\eta({\bm\xi})$, $
\theta_k^\mu({\bm\xi}) $ and $\theta_{kl}^\mu({\bm\xi}) $ be the solutions of
the cell problems {\rm(\ref{eq:3-1})}-{\rm(\ref{eq:3-6})}, respectively.
Under the assumptions {\rm$({\rm A}_1)$--$({\rm A}_3)$} and
{\rm$({\rm H}_1)$--$({\rm H}_2)$}, one can prove that
the normal derivatives $ \sigma_{\bm\xi}^a(\theta_k^a) $,
$ \sigma_{\bm\xi}^a(\theta_{kl}^a) $, $ \sigma_{\bm\xi}^\eta(\theta_k^\eta) $,
$ \sigma_{\bm\xi}^\eta(\theta_{kl}^\eta) $, $ \sigma_{\bm\xi}^\mu(\theta_k^\mu) $,
and $ \sigma_{\bm\xi}^\mu(\theta_{kl}^\mu) $, $ k, l=1,2,3 $ are continuous
on the boundary $ \partial Q $ of the reference cell $ Q $. Note that
$ \sigma_{\bm\xi}^a(u)\equiv\nu_i a_{ij}\frac{\displaystyle \partial u}
{\displaystyle \partial \xi_j} $,
$ \sigma_{\bm\xi}^\eta(u)\equiv\nu_i \eta_{ij}\frac{\displaystyle \partial u}
{\displaystyle \partial \xi_j} $ and
$ \sigma_{\bm\xi}^\mu(v)\equiv\nu_i \mu_{ij}\frac{\displaystyle \partial v}
{\displaystyle \partial \xi_j} $ , where $ {\bm\nu}=(\nu_1,\cdots, \nu_n) $ is
the outward unit normal to $ \partial Q $.
\end{lemma}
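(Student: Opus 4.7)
The continuity of each conormal derivative on $\partial Q$ is equivalent to the matching of its values on opposite faces of the cube, so that the periodic extension of the cell function carries a continuous normal flux across cell boundaries. My plan is to prove this by a reflection argument, relying on $({\rm H}_1)$--$({\rm H}_2)$ and the uniqueness of the Dirichlet cell problems supplied by Remark~\ref{rem3-1}.

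First I would treat $\theta_k^a$. For each $j\in\{1,2,3\}$, let $R_j\bm\xi=(\xi_1,\ldots,1-\xi_j,\ldots,\xi_3)$ denote the reflection about the middle plane $\Delta_j$, and set $v(\bm\xi):=s_{jk}\theta_k^a(R_j\bm\xi)$ with $s_{jk}=-1$ when $j=k$ and $s_{jk}=+1$ otherwise. The homogeneous Dirichlet condition on $\partial Q$ is preserved by $R_j$. Using the diagonal structure from $({\rm H}_1)$ together with the symmetries of the $a_{ii}$ under $R_j$ coming from $({\rm H}_2)$, a direct chain-rule calculation shows that the diffusion operator $\sum_i\partial_{\xi_i}(a_{ii}\partial_{\xi_i}\cdot)$ intertwines with the assignment $w\mapsto s_{jk}w\circ R_j$ in such a way that $v$ satisfies exactly the same elliptic problem as $\theta_k^a$ on $Q$. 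The source $-\partial_{\xi_i}a_{ik}=-\partial_{\xi_k}a_{kk}$ transforms into $s_{jk}$ times itself under $R_j$, precisely the sign needed for consistency. Uniqueness therefore forces $v=\theta_k^a$, i.e.\ $\theta_k^a$ is odd under $R_k$ and even under $R_j$ for $j\neq k$.

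Once this parity is in hand, $\partial_{\xi_i}\theta_k^a$ is even in $\xi_i$ about $\Delta_i$; combined with the evenness of $a_{ii}$ in $\xi_i$ provided by $({\rm H}_2)$, the product $a_{ii}\partial_{\xi_i}\theta_k^a$ takes identical values on the opposite faces $\xi_i=0$ and $\xi_i=1$. This is exactly the desired matching of $\sigma_{\bm\xi}^a(\theta_k^a)$ on opposite faces, and continuity across the remaining pieces of $\partial Q$ (edges and corners) follows from interior regularity of the cell problem together with the matching just obtained. Iterating over the three pairs of faces completes the argument for $\theta_k^a$.

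For the remaining functions the scheme is the same. The second-order scalar correctors $\theta_{kl}^a$, $\theta_{kl}^\eta$, $\theta_{kl}^\mu$ have sources of the form $-\partial_{\xi_i}(a_{ik}\theta_l^a)-a_{kj}\partial_{\xi_j}\theta_l^a-a_{kl}+\hat a_{kl}$, whose parity under each $R_j$ is dictated by the parities of $\theta_l^a$ and of the coefficients already computed in the previous step. The functions $\theta_k^\eta,\theta_{kl}^\eta,\theta_k^\mu,\theta_{kl}^\mu$ are handled identically with $\eta$ and $\mu$ in place of $A$. The main obstacle is the termwise verification in the second step that the reflected function solves the original PDE: this is where $({\rm H}_1)$ becomes indispensable, since any off-diagonal entry $a_{ij}$ with $i\neq j$ would couple derivatives along different axes under $R_j$ and destroy the invariance of the operator. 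Once that invariance is confirmed for each cell problem, the lemma follows from uniqueness.
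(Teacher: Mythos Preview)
The paper does not supply its own proof of this lemma; it simply cites Proposition~2.5 of \cite{Cao-3} and \cite{Cao-1}, where the reflection technique you outline is indeed the underlying idea. Your parity step is correct: under the reading of $({\rm H}_2)$ in which every diagonal coefficient is invariant under every reflection $R_j$, uniqueness of the Dirichlet problem forces $\theta_k^a$ to be odd under $R_k$ and even under $R_j$ for $j\neq k$.

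The gap is in the passage from parity to matching of the conormal flux. You assert that $\partial_{\xi_i}\theta_k^a$ is even in $\xi_i$ about $\Delta_i$ for every $i$, but this holds only when $i=k$. When $i\neq k$ the function $\theta_k^a$ is \emph{even} under $R_i$, and differentiating an even function once in the reflected variable produces an \emph{odd} function; hence $a_{ii}\,\partial_{\xi_i}\theta_k^a$ takes \emph{opposite} values on the faces $\xi_i=0$ and $\xi_i=1$, not equal ones. (For a one–dimensional analogue, $\sin(\pi\xi)$ is even about $\xi=1/2$ and vanishes at $0$ and $1$, yet its derivative at those endpoints is $\pm\pi$.) Thus your argument, as written, secures the matching of $\sigma_{\bm\xi}^a(\theta_k^a)$ only across the pair of faces $\xi_k=0,1$ and leaves the other four faces untreated. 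Since it is precisely this matching on every face that eliminates interfacial distributions when $\nabla\cdot\bigl(A(\cdot/\varepsilon)\nabla\,\cdot\bigr)$ is applied to the periodically assembled corrector in the proof of Theorem~\ref{thm3-1}, the gap is substantive. You would need either an additional argument that the normal flux actually vanishes on the faces $\xi_i=0,1$ for $i\neq k$ (which does not follow from parity and the Dirichlet condition alone), or a different mechanism altogether; the cited references address this point, and it is worth consulting them for the complete argument. A secondary issue is that the literal wording of $({\rm H}_2)$ grants only that $a_{kk}$ is symmetric under $R_k$, not that every $a_{ii}$ is symmetric under every $R_j$; your reflection step tacitly uses the stronger hypothesis, so you should make that reading explicit.
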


\begin{lemma}\label{lem3-2}
(See Proposition 2.1 of \cite{Cao-3})
Let\/ $ {\bm\Theta}_{1,p}^{\eta}({\bm\xi})$ and $
{\bm\Theta}_{1,p}^{\mu}({\bm\xi})$, $p=1,2,3 $, be the
solutions of the cell problems\/ {\rm(\ref{eq:3-7})} and
{\rm(\ref{eq:3-8})}, respectively. Under the assumptions\/  {\rm$({\rm
A}_1)$--$({\rm A}_3)$},\/ {\rm$({\rm H}_1)$--$({\rm H}_2)$}, it can
be proved that
\begin{equation}\label{eq:3-20}
{\displaystyle[\eta^{-1}({\bm\xi})\mathbf{curl_{\xi}}\Theta_{1,p}^{\eta}({\bm\xi})\times
{\bm\nu}]|_{\partial Q}=0,\quad
[\mu^{-1}({\bm\xi})\mathbf{curl_{\xi}}\Theta_{1,p}^{\mu}({\bm\xi})\times
{\bm\nu}]|_{\partial Q}=0,}
\end{equation}
where $ {\bm\nu}=(\nu_1,\nu_2,\nu_3) $ is the outward unit
normal on the boundary $ \partial Q $ of the reference cell
$ Q=(0,1)^3 $;\/ $ [v]|_{\partial Q} $ denotes the jump of a function $ v $ on $ \partial Q $.
\end{lemma}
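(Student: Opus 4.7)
The plan is to follow the reflection–extension technique referenced in Proposition~2.1 of \cite{Cao-3}. The statement claims that the tangential trace of $\eta^{-1}({\bm\xi})\mathbf{curl}_{{\bm\xi}}\Theta_{1,p}^{\eta}$ has no jump across any face of $\partial Q$. Since the Dirichlet-type boundary condition in (\ref{eq:3-7}) only prescribes the tangential component of $\Theta_{1,p}^{\eta}$, the natural conormal-type quantity $\eta^{-1}\mathbf{curl}_{\bm\xi}\Theta_{1,p}^{\eta}\times{\bm\nu}$ is a priori only a one-sided trace, and the point of the lemma is that assumptions $({\rm H}_1)$--$({\rm H}_2)$ force the two-sided matching once $\Theta_{1,p}^{\eta}$ is reflected across the face.

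First, I would exploit the symmetries. Because $\eta({\bm\xi})$ is diagonal by $({\rm H}_1)$ and each $\eta_{kk}$ is symmetric about the middleplane $\Delta_k=\{\xi_k=1/2\}$ by $({\rm H}_2)$, the coefficient is invariant under every reflection $R_k\colon \xi_k\mapsto 1-\xi_k$. Write the variational form of (\ref{eq:3-7}), pull it back under $R_k$, and match against the boundary condition $\Theta_{1,p}^{\eta}\times{\bm\nu}=0$; uniqueness via Lax--Milgram (Remark~\ref{rem3-2}) then forces $\Theta_{1,p}^{\eta}$ to have a definite parity under each $R_k$, with its $k$-th component opposite in parity to the other two components (the sign pattern being dictated by the source term $-\mathbf{curl}_{\bm\xi}(\eta^{-1}\mathbf{e}_p)$). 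The divergence constraint $\nabla_{\bm\xi}\cdot\Theta_{1,p}^{\eta}=0$ is preserved by the same reflections once the parities are fixed.

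Second, using these parities I would extend $\Theta_{1,p}^{\eta}$ by reflection across each face of $Q$ to a function on $(-1,1)^3$ and then by $2$-periodicity to all of $\mathbb{R}^3$, extending $\eta({\bm\xi})$ by the same reflections (which leaves it consistent thanks to $({\rm H}_1)$--$({\rm H}_2)$). The extended field is then a weak solution of $\mathbf{curl}_{\bm\xi}(\eta^{-1}\mathbf{curl}_{\bm\xi}\Theta_{1,p}^{\eta})=-\mathbf{curl}_{\bm\xi}(\eta^{-1}\mathbf{e}_p)$ on all of $\mathbb{R}^3$. Consequently $\eta^{-1}\mathbf{curl}_{\bm\xi}\Theta_{1,p}^{\eta}\in H(\mathbf{curl})$ across each reflected face, so its tangential trace has no jump across $\partial Q$, giving the first identity in (\ref{eq:3-20}). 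Replacing $\eta$ by $\mu$ and (\ref{eq:3-7}) by (\ref{eq:3-8}) delivers the second identity verbatim.

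The main obstacle is the careful book-keeping of the parities: one has to verify that for each fixed $p\in\{1,2,3\}$ and each face-index $k$, the reflection of $\Theta_{1,p}^{\eta}$ produces exactly the sign pattern needed so that both the curl–curl equation and the divergence-free constraint, as well as the boundary condition $\Theta_{1,p}^{\eta}\times{\bm\nu}=0$ on the reflected face, are all consistent. Once this combinatorial verification is in place, the continuity of the tangential trace of $\eta^{-1}\mathbf{curl}_{\bm\xi}\Theta_{1,p}^{\eta}$ across $\partial Q$ is an immediate consequence of the fact that the reflected extension is a genuine $H(\mathbf{curl})$-solution on a neighborhood of each face.
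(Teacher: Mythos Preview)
The paper does not supply its own proof of this lemma: the statement itself carries the parenthetical ``(See Proposition~2.1 of \cite{Cao-3})'' and no further argument is given. Your reflection--extension strategy based on the symmetry hypotheses $({\rm H}_1)$--$({\rm H}_2)$ is the standard route for this kind of result and is almost certainly the approach taken in the cited reference, so there is nothing to compare against within the present paper.
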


\begin{lemma}\label{lem3-3}
(See Proposition 2.2 of \cite{Cao-3})
Let\/ $ {\bm\Theta}_{2,p}^{\eta}({\bm\xi})$ and $
{\bm\Theta}_{2,p}^{\mu}({\bm\xi})$, $p=1,2,3 $, be the
solutions of the cell problems\/ {\rm(\ref{eq:3-9})} and
{\rm(\ref{eq:3-10})}, respectively. Under the assumptions\/  {\rm$({\rm
A}_1)$--$({\rm A}_3)$},\/ {\rm$({\rm H}_1)$--$({\rm H}_2)$}, it can
be proved that
\begin{equation}\label{eq:3-21}
{\displaystyle[\eta^{-1}({\bm\xi})\mathbf{curl_{\xi}}\Theta_{2,p}^{\eta}({\bm\xi})\times
{\bm\nu}]|_{\partial Q}=0,\quad
[\mu^{-1}({\bm\xi})\mathbf{curl_{\xi}}\Theta_{2,p}^{\mu}({\bm\xi})\times
{\bm\nu}]|_{\partial Q}=0.}
\end{equation}
\end{lemma}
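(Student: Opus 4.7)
The plan is to follow the symmetric-extension strategy that underlies Lemma~\ref{lem3-2} and adapt it to the more involved right-hand side in the second-order cell problem (\ref{eq:3-9}). The essential point is that under $({\rm H}_1)$ and $({\rm H}_2)$ one can extend $\bm\Theta_{2,p}^\eta$ across the middle-planes $\Delta_k$ by reflection with suitable signs per component, so that the extension is periodic on a doubled cell $\widetilde Q$ of side $2$, and periodicity is exactly the property that forces the tangential jump of $\eta^{-1}\mathbf{curl}_{\bm\xi}\bm\Theta_{2,p}^\eta$ across images of $\partial Q$ to vanish.

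First I would track, component by component, the parity of every ingredient appearing on the right-hand side of (\ref{eq:3-9}). By $({\rm H}_1)$--$({\rm H}_2)$ each $\eta_{kk}(\bm\xi)$ is even across each $\Delta_k$, so $\eta^{-1}\mathbf e_p$ and the constant $\widehat\eta^{-1}\mathbf e_p$ are even with respect to reflections in $\xi_k$ for $k\neq p$ and compatible with an odd reflection in $\xi_p$. Lemma~\ref{lem3-2} provides the parity pattern of $\bm\Theta_{1,p}^\eta$ and of $\eta^{-1}\mathbf{curl}_{\bm\xi}\bm\Theta_{1,p}^\eta$: in particular the latter is tangentially continuous across $\Delta_k$, which means the combination $\widetilde G^\eta$ defined in (\ref{eq:3-14}) has a well-defined odd/even structure with respect to every $\Delta_k$.

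Next I would handle the auxiliary potential $\zeta_{2,p}^\eta$ in (\ref{eq:3-12}). Because $\nabla_{\bm\xi}\cdot\widetilde G^\eta$ has a parity inherited from $\widetilde G^\eta$ and the Dirichlet problem on $Q$ preserves that parity in its unique $H^2\cap H_0^1$ solution (see (\ref{eq:3-16})), the gradient $\nabla_{\bm\xi}\zeta_{2,p}^\eta$ has the parity needed to make $\widetilde G^\eta+\nabla_{\bm\xi}\zeta_{2,p}^\eta$ divergence-free \emph{also} across $\partial Q$ after reflection, not merely inside $Q$. With the right-hand side so organised, the unique Dirichlet solution $\bm\Theta_{2,p}^\eta$ of (\ref{eq:3-9}) inherits a definite parity, and its reflected copies glue consistently into a $2$-periodic vector field on $\mathbb R^3$ that solves the same $\mathbf{curl}\,(\eta^{-1}\mathbf{curl}\,\cdot)$ system on $\widetilde Q$ with periodic boundary conditions.

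Once this periodic extension is in hand, the conclusion is immediate: on the interior interface $\partial Q\subset\widetilde Q$ the tangential trace of $\eta^{-1}\mathbf{curl}_{\bm\xi}\bm\Theta_{2,p}^\eta$ cannot carry a distributional surface term, because the distributional $\mathbf{curl}$ of the extension is controlled by (\ref{eq:3-9}) globally on $\widetilde Q$. Hence $[\eta^{-1}\mathbf{curl}_{\bm\xi}\bm\Theta_{2,p}^\eta\times\bm\nu]|_{\partial Q}=0$, and the proof for $\bm\Theta_{2,p}^\mu$ is word-for-word identical with $\eta$ replaced by $\mu$. I expect the main obstacle to be the bookkeeping step in the second paragraph above: verifying that $\widetilde G^\eta+\nabla_{\bm\xi}\zeta_{2,p}^\eta$ has the precise sign pattern across all three middle-planes, so that the three successive reflections are mutually compatible. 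Everything else is a straightforward application of Lax--Milgram, elliptic regularity, and the reflection principle already used in Lemma~\ref{lem3-2}.
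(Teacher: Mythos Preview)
The paper does not give its own proof of Lemma~\ref{lem3-3}; it simply quotes the result from Proposition~2.2 of \cite{Cao-3}. Your reflection/parity argument is exactly the mechanism used there (and in the companion Lemma~\ref{lem3-2}), so your proposal is in line with the cited source and there is nothing substantive to compare.
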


Next we give the main convergence theorems of the multiscale asymptotic method.
\begin{theorem}\label{thm3-1}
Suppose that\/ $ \Omega\subset \mathbb{R}^3 $ is the union of entire
periodic cells, i.e. $\overline{\Omega}=\bigcup_{\mathbf{z}\in
I_\varepsilon}\varepsilon(\mathbf{z}+\overline{Q}) $, where the index set
$I_\varepsilon=\{\mathbf{z}\in \mathbb{Z}^3,${\rm~}$\varepsilon(\mathbf{z}+\overline{Q})\subset
\overline{\Omega}\} $ and $ \varepsilon>0 $ is any fixed small parameter. Let\/
$(\Psi^\varepsilon(\mathbf{x},t), \mathbf{E}^{\varepsilon}(\mathbf{x},t),
\mathbf{H}^{\varepsilon}(\mathbf{x},t))$ be the solution of the
original problem {\rm(\ref{eq:1-6})}, and let\/
$(\Psi^\varepsilon_1(\mathbf{x},t), \mathbf{E}^{\varepsilon, (1)}(\mathbf{x},t),
\mathbf{H}^{\varepsilon, (1)}(\mathbf{x},t))$ and
$(\Psi^\varepsilon_2(\mathbf{x},t), \mathbf{E}^{\varepsilon, (2)}(\mathbf{x},t),
\mathbf{H}^{\varepsilon, (2)}(\mathbf{x},t))$ be the first-order
and the second-order multiscale asymptotic solutions defined in\/
{\rm(\ref{eq:3-17})}-{\rm(\ref{eq:3-19})}, respectively. Under the
assumptions\/ {\rm$({\rm A}_1)$--$({\rm A}_4)$} and\/ {\rm$({\rm
H}_1)$--$({\rm H}_2)$}, if\/
 $(\Psi^0, \mathbf{E}^0, \mathbf{H}^0)\in L^2(0,T; \/\mathbb{H}^3(\Omega)\times (H^{3}(\Omega))^6)\cap
 H^1(0,T;  $\\
 $\mathbb{H}^2(\Omega)\times (H^2(\Omega))^6) $, $ f\in H^1(0,T; (H^1(\Omega))^3) $,
 $ ({\bm\varphi}, {\bm\psi})\in (H^3(\Omega))^6 $,
 $ T<\infty $ and arbitrary, then we have
\begin{equation}\label{eq:3-22}
\begin{array}{l@{}}
{\displaystyle \|\Psi^\varepsilon-\Psi_s^\varepsilon\|_{L^2(0,T;\/ \mathbb{H}_0^1(\Omega))}
+\|\mathbf{E}^\varepsilon-\mathbf{E}^{\varepsilon, (s)}\|_{L^2(0,T;\/ (L^2(\Omega))^3)}}\\[2mm]
{\displaystyle\quad+\|\mathbf{H}^\varepsilon-\mathbf{H}^{\varepsilon, (s)}\|_{L^2(0,T; \/(L^2(\Omega))^3)}
\rightarrow 0,\quad {\rm as}\quad \varepsilon\rightarrow 0,\quad s=1,2.}
\end{array}
\end{equation}
\end{theorem}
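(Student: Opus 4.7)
The plan is to establish the convergence by a residual/energy argument, treating the three unknowns $\Psi^\varepsilon-\Psi_s^\varepsilon$, $\mathbf{E}^\varepsilon-\mathbf{E}^{\varepsilon,(s)}$, $\mathbf{H}^\varepsilon-\mathbf{H}^{\varepsilon,(s)}$ as a coupled system and closing the estimate with Gronwall's lemma. First I would plug the ansatz (\ref{eq:3-17})--(\ref{eq:3-19}) into the operators of (\ref{eq:1-6}). Using the cell problems (\ref{eq:3-1})--(\ref{eq:3-10}) and the definitions of $\widehat{A},\widehat{\eta},\widehat{\mu}$ in (\ref{eq:2-4}) and (\ref{eq:3-11}), the $O(\varepsilon^{-2})$, $O(\varepsilon^{-1})$ and, for $s=2$, $O(\varepsilon^0)$ contributions cancel. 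This leaves residuals of the form $\varepsilon^{s}\,R_s^\Psi(\mathbf{x},\mathbf{x}/\varepsilon,t)$, $\varepsilon^{s}\,\mathbf{R}_s^E$, $\varepsilon^{s}\,\mathbf{R}_s^H$, each of which is a linear combination of $(\Psi^0,\mathbf{E}^0,\mathbf{H}^0)$-derivatives multiplied by bounded periodic cell factors, hence of order $\varepsilon^{s}$ in $L^2(0,T;L^2(\Omega))$ under the regularity hypotheses on $(\Psi^0,\mathbf{E}^0,\mathbf{H}^0)$ and the data.

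Next I would set up the error equations by subtracting the residual problem from the original system. Writing $e^\varepsilon_\Psi=\Psi^\varepsilon-\Psi_s^\varepsilon$, $\mathbf{e}^\varepsilon_E=\mathbf{E}^\varepsilon-\mathbf{E}^{\varepsilon,(s)}$, $\mathbf{e}^\varepsilon_H=\mathbf{H}^\varepsilon-\mathbf{H}^{\varepsilon,(s)}$, one obtains a Maxwell--Schr\"odinger-type system driven by the residuals. Testing the Schr\"odinger error equation with $\dot e^\varepsilon_\Psi$ (taking real parts, as in (\ref{eq:2-13})), and the Maxwell error equations with $\mathbf{e}^\varepsilon_E$, $\mathbf{e}^\varepsilon_H$ respectively, I add the three identities: the $\mathbf{curl}$-duality cancels in the Maxwell block just as in the standard energy identity for the time-dependent Maxwell equations, and the coupling terms $-(\mathbf{E}^\varepsilon-\mathbf{E}^{\varepsilon,(s)})\cdot\widehat{\bm\zeta}\,\Psi^\varepsilon$ and the quantum current $\mathbf{J}_q^\varepsilon-\mathbf{J}_q^{\varepsilon,(s)}$ are absorbed using $\widehat{\bm\zeta}=-\mathbf{x}\in L^\infty(\Omega)$ and Cauchy--Schwarz, after observing (as in the proof of Theorem~\ref{thm2-1}) that $\|\Psi^\varepsilon\|_{L^\infty(0,T;\mathbb{H}^1_0)}$ and $\|\Psi_s^\varepsilon\|_{L^\infty(0,T;\mathbb{H}^1_0)}$ are uniformly bounded in $\varepsilon$. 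The trick of adding $\alpha\|e^\varepsilon_\Psi\|^2$ to coercivise the quadratic form (cf.\ (\ref{eq:2-18})--(\ref{eq:2-20})) is applied again.

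The delicate technical point is that the asymptotic expansions (\ref{eq:3-17})--(\ref{eq:3-19}) only satisfy the boundary conditions (\ref{eq:1-7}) up to boundary-layer terms of size $O(\varepsilon)$, and the oscillating coefficients create jumps of the conormal/tangential fluxes across the faces of each periodic cell. This is exactly where Lemmas~\ref{lem3-1}--\ref{lem3-3} enter: under $({\rm H}_1)$--$({\rm H}_2)$, the cell functions $\theta_k^a$, $\theta_{kl}^a$, $\theta_k^\eta$, $\theta_{kl}^\eta$, $\theta_k^\mu$, $\theta_{kl}^\mu$ and $\Theta_{1,p}^{\eta,\mu}$, $\Theta_{2,p}^{\eta,\mu}$ have continuous conormal derivatives across $\partial Q$, so the cell boundary integrals that would otherwise be obstructions telescope to boundary contributions on $\partial\Omega$ that can be estimated by $\varepsilon^{1/2}$ trace arguments, using that $\Omega=\bigcup_{\mathbf{z}\in I_\varepsilon}\varepsilon(\mathbf{z}+\overline Q)$ is a clean union of cells (hence no incomplete cells on $\partial\Omega$). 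I expect this bookkeeping of interface jumps to be the main obstacle of the proof; once it is done for each of $\theta^a,\theta^\eta,\theta^\mu,\Theta_1^{\eta,\mu},\Theta_2^{\eta,\mu}$, the residuals are legitimately of order $\varepsilon^s$ in the relevant dual norms.

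Finally, assembling the three energy identities into a single inequality of the form
\begin{equation*}
\frac{d}{dt}\mathcal{E}(t)\le C\mathcal{E}(t)+C\,\varepsilon^{2s}\,\Phi(t),
\end{equation*}
where $\mathcal{E}(t)=\|e^\varepsilon_\Psi\|^2_{\mathbb{H}^1_0}+\|\mathbf{e}^\varepsilon_E\|^2_{(L^2)^3}+\|\mathbf{e}^\varepsilon_H\|^2_{(L^2)^3}$ and $\Phi\in L^1(0,T)$ depends only on the norms of $(\Psi^0,\mathbf{E}^0,\mathbf{H}^0)$ assumed in the theorem, one applies Gronwall's lemma and uses $\mathcal{E}(0)=O(\varepsilon^{2s})$ (which follows because the initial data $\Psi_0$, $\widehat\eta^{-1}\bm\varphi$, $\widehat\mu^{-1}\bm\psi$ agree with the leading-order terms of the expansions, so that $e^\varepsilon_\Psi(\cdot,0),\mathbf{e}^\varepsilon_E(\cdot,0),\mathbf{e}^\varepsilon_H(\cdot,0)$ consist only of $\varepsilon$- and $\varepsilon^2$-terms). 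Integrating in $t$ then gives the desired $L^2(0,T;\cdot)$ convergence in (\ref{eq:3-22}) for both $s=1$ and $s=2$, completing the proof.
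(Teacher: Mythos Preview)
Your overall architecture is reasonable, but there is a genuine gap in the residual estimate for $s=1$. After inserting $\Psi_1^\varepsilon$ into the Schr\"odinger operator, the residual $\mathcal{F}_1^\varepsilon$ (see (\ref{eq:3-24})) is \emph{not} of the form $\varepsilon R_1$ in $L^2$: it contains the $O(1)$ oscillating term
\[
g_1(\bm\xi,\mathbf{x},t)=-\Bigl[\hat a_{lk}-a_{lk}(\bm\xi)-a_{lj}(\bm\xi)\frac{\partial\theta_k^a}{\partial\xi_j}-\frac{\partial(a_{lj}\theta_k^a)}{\partial\xi_j}\Bigr]\frac{\partial^2\Psi^0}{\partial x_l\partial x_k},
\]
together with $g_2=-(V_c(\bm\xi)-\langle V_c\rangle)\Psi^0$, both of which are bounded in $L^2$ uniformly in $\varepsilon$ and become small only after dual pairing with an $H^1_0$ test function, via the oscillation lemma (Lemma~1.6 of \cite{Ole}); cf.\ (\ref{eq:3-27})--(\ref{eq:3-28}). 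Your plan to test the Schr\"odinger error equation with $\dot e_\Psi^\varepsilon$ forces you to pair these $O(1)$ residuals with an $L^2$ function, so the oscillation gain is lost and you cannot close the Gronwall inequality with a right-hand side of size $\varepsilon^{2s}$.

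The paper avoids this by a sequential rather than simultaneous argument. It tests the Schr\"odinger error equation with $z^\varepsilon=\Psi^\varepsilon-\Psi_1^\varepsilon$ itself and takes the real part (see (\ref{eq:3-25})), which produces $a^\varepsilon(z^\varepsilon,z^\varepsilon)$ directly and places the residual in the $H^{-1}/H^1_0$ duality, so that (\ref{eq:3-28}) applies. The coupling term $((\mathbf{E}^\varepsilon-\mathbf{E}^{\varepsilon,(1)})\cdot\widehat{\bm\zeta})\Psi_1^\varepsilon$ is not absorbed by Gronwall against $\mathbf{e}_E^\varepsilon$ but is controlled by $\|\mathbf{E}^\varepsilon-\mathbf{E}^0\|_{L^2(0,T;(H^{-1})^3)}\to 0$, already established in Theorem~\ref{thm2-1}. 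Only afterwards does the paper treat the Maxwell block: it shows $\mathbf{J}_q^\varepsilon-\mathbf{J}_q^{\varepsilon,(1)}\rightharpoonup 0$ weakly in $L^2(0,T;(L^2)^3)$ and then invokes Theorem~2.6 of \cite{Cao-4} for $\mathbf{E}^\varepsilon-\mathbf{E}^{\varepsilon,(1)}$ and $\mathbf{H}^\varepsilon-\mathbf{H}^{\varepsilon,(1)}$. In particular the paper obtains only the qualitative conclusion $\to 0$, not the rate $\varepsilon^s$ your Gronwall scheme was aiming for.
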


\begin{proof}
Due to space limitations, here we only prove Theorem~\ref{thm3-1} for the case $ s=1 $.
The case $ s=2 $ is similar.

Thanks to Lemma~\ref{lem3-1}, from (\ref{eq:1-6}), (\ref{eq:2-5}),
(\ref{eq:3-1})-(\ref{eq:3-6}), (\ref{eq:3-17})-(\ref{eq:3-19}), by a tedious computation,
we get the following equality which holds in the sense of distributions:
\begin{equation}\label{eq:3-23}
\begin{array}{l@{}}
{\displaystyle -i\frac{\partial (\Psi^\varepsilon-\Psi_1^\varepsilon)}{\partial t}
-\nabla\cdot \big(A(\frac{\mathbf{x}}{\varepsilon})\nabla (\Psi^\varepsilon-\Psi_1^\varepsilon)\big)
+\big(V_c(\frac{\mathbf{x}}{\varepsilon})-\mathbf{E}^\varepsilon\cdot \widehat{\bm \zeta}\big)
(\Psi^\varepsilon-\Psi_1^\varepsilon)}\\[2mm]
{\displaystyle\quad =\big((\mathbf{E}^\varepsilon-\mathbf{E}^{\varepsilon,(1)})\cdot \widehat{\bm \zeta}\big)\Psi_1^\varepsilon
+\mathcal{F}^\varepsilon_1,}
\end{array}
\end{equation}
where
\begin{equation}\label{eq:3-24}
\begin{array}{l@{}}
{\displaystyle \mathcal{F}^\varepsilon_1=i\varepsilon \theta_k^a\frac{\partial^2 \Psi^0}{\partial t \partial x_k}
-\Big[\hat{a}_{lk}-a_{lk}({\bm \xi})-a_{lj}({\bm \xi})\frac{\partial \theta_k^a({\bm\xi})}{\partial \xi_j}
-\frac{\partial (a_{lj}({\bm \xi})\theta_k^a)}{\partial \xi_j}\Big]\frac{\partial^2 \Psi^0}{\partial x_l \partial x_k}}\\[2mm]
{\displaystyle\quad + \varepsilon a_{lj}({\bm \xi})\theta_k^a\frac{\partial^3 \Psi^0}{\partial x_l \partial j \partial x_k}
-\big(V_c({\bm\xi})-\langle V_c\rangle \big)\Psi^0-\varepsilon \big(V_c({\bm\xi})-\langle V_c\rangle \big)
\theta_k^a\frac{\partial \Psi^0}{\partial x_k}}\\[2mm]
{\displaystyle\quad+\varepsilon (\mathbf{E}^0\cdot \widehat{\bm \zeta})\theta_k^a\frac{\partial \Psi^0}{\partial x_k}
 +\varepsilon\Big(\nabla(\theta_k^\eta E_k^0)\cdot \widehat{\bm \zeta}-
\big(\Theta_1^\mu\widehat{\mu}\frac{\partial \mathbf{H}^0}{\partial t}\big)\cdot \widehat{\bm \zeta}\Big)\cdot
\big(\Psi^0+\varepsilon \theta_k^a\frac{\partial \Psi^0}{\partial x_k}\big).}
\end{array}
\end{equation}

Setting $ z^\varepsilon=\Psi^\varepsilon-\Psi^\varepsilon_1 $ and
following the lines of (\ref{eq:2-16})-(\ref{eq:2-19}), we get
\begin{equation}\label{eq:3-25}
\begin{array}{@{}l@{}}
{\displaystyle a^\varepsilon(z^\varepsilon, z^\varepsilon)+\big((\alpha+V_c(\frac{\mathbf{x}}{\varepsilon})-\mathbf{E}^\varepsilon\cdot \widehat{{\bm\zeta}})z^\varepsilon,
z^\varepsilon\big)-(\alpha z^\varepsilon, z^\varepsilon)}\\[2mm]
{\displaystyle \quad =\mathcal{R}e\Big\{\Big(\big((\mathbf{E}^\varepsilon-\mathbf{E}^{\varepsilon,(1)})\cdot \widehat{\bm \zeta}\big)\Psi_1^\varepsilon, z^\varepsilon\Big)\Big\}
+\mathcal{R}e\Big\{\big(\mathcal{F}^\varepsilon_1, z^\varepsilon\big)\Big\}.}
\end{array}
\end{equation}

Let
\begin{equation}\label{eq:3-26}
\begin{array}{l@{}}
{\displaystyle
g_1({\bm\xi},\mathbf{x},t)=-[\hat{a}_{lk}-a_{lk}({\bm \xi})-a_{lj}({\bm \xi})\frac{\partial \theta_k^a({\bm\xi})}{\partial \xi_j}
-\frac{\partial (a_{lj}({\bm \xi})\theta_k^a({\bm \xi}))}{\partial \xi_j}]\frac{\partial^2 \Psi^0(\mathbf{x},t)}{\partial x_l \partial x_k}
,}\\[2mm]
{\displaystyle g_2({\bm\xi},\mathbf{x},t)=-\big(V_c({\bm\xi})-\langle V_c\rangle \big)\Psi^0(\mathbf{x},t).}
\end{array}
\end{equation}

Under the assumptions of this theorem,
we can prove that the scalar functions $ g_j({\bm\xi},\mathbf{x},t)$, $j=1,2 $ are bounded and
measurable in $ ({\bm\xi},\mathbf{x},t) $, 1-periodic in $ {\bm\xi} $,
Lipschitz continuous with respect to $ (\mathbf{x},t) $
uniformly in $ {\bm\xi} $, and
\begin{equation}\label{eq:3-27}
{\displaystyle \int_Q g_j({\bm\xi},\mathbf{x},t)\mathnormal{d}{\bm\xi}=0,\quad j=1,2.}
\end{equation}
By applying Lemma~1.6 of \cite[p.~8]{Ole}, we get
\begin{equation}\label{eq:3-28}
{\displaystyle \left|\int_{\Omega} g_j(\frac{\displaystyle \mathbf{x}}{\displaystyle \varepsilon},\mathbf{x},t)
 v \mathnormal{d}\mathbf{x}\right|\leq C
\varepsilon \|v\|_{\mathbb{H}^1_0(\Omega)}, \quad \forall
v\in \mathbb{H}^1_0(\Omega),\quad j=1,2,}
\end{equation}
where $ C $ is a constant independent of $ \varepsilon $.

Similarly to (\ref{eq:2-20}), under the assumptions of this theorem, combining (\ref{eq:3-24})-(\ref{eq:3-28}) implies
\begin{equation*}
{\displaystyle \|\Psi^\varepsilon-\Psi^\varepsilon_1\|_{L^2(0,T; \/\mathbb{H}_0^1(\Omega))}\leq
C\big\{\varepsilon+\|\mathbf{E}^\varepsilon-\mathbf{E}^0\|_{L^2(0,T;\/ (H^{-1}(\Omega))^3)}\big\}.}
\end{equation*}

It follows from Theorem~\ref{thm2-1} that
\begin{equation*}
{\displaystyle \|\Psi^\varepsilon-\Psi^\varepsilon_1\|_{L^2(0,T; \/\mathbb{H}_0^1(\Omega))}\rightarrow 0,\quad {\rm as}\quad
\varepsilon\rightarrow 0.}
\end{equation*}

We recall (\ref{eq:1-6}) and (\ref{eq:3-17})-(\ref{eq:3-19}), and get
\begin{equation*}
\begin{array}{@{}l@{}}
{\displaystyle
\mathbf{J}_q^\varepsilon-\mathbf{J}_q^{\varepsilon, (1)}=iN\big[
(\overline{\Psi}^\varepsilon-\overline{\Psi}^\varepsilon_1) A(\frac{\mathbf{x}}
{\varepsilon})\nabla \Psi^\varepsilon+
\overline{\Psi}^\varepsilon_1 A(\frac{\mathbf{x}}
{\varepsilon})\nabla (\Psi^\varepsilon-\Psi^\varepsilon_1)}\\[2mm]
{\displaystyle \quad -
(\Psi^\varepsilon-\Psi^\varepsilon_1)A(\frac{\mathbf{x}}
{\varepsilon})\nabla \overline{\Psi}^\varepsilon-
\Psi^\varepsilon_1 A(\frac{\mathbf{x}}
{\varepsilon})\nabla (\overline{\Psi}^\varepsilon-\overline{\Psi}^\varepsilon_1)\big].}
\end{array}
\end{equation*}

Hence we have
\begin{equation}\label{eq:3-29}
{\displaystyle \big(\mathbf{J}^\varepsilon_q-\mathbf{J}^{\varepsilon, (1)}\big)\stackrel{\mathrm{w}}{\rightharpoonup} 0, \,\,
{\rm weakly \,\, in }\,\, L^2(0,T;\/ (L^2(\Omega))^3),\,\,\, {\rm as}\,\, \,\varepsilon\rightarrow 0}.
\end{equation}

Under the assumptions of this theorem, we verify that all conditions of Theorem~2.6 in \cite{Cao-4} can be satisfied.
Following the lines of the proof of Theorem~2.6 in \cite{Cao-4} and using (\ref{eq:3-29}), we prove
\begin{equation*}
\begin{array}{@{}l@{}}
{\displaystyle \int_0^T\int_{\Omega}\big\{
(\mathbf{E}^\varepsilon-\mathbf{E}^{\varepsilon, (1)})\cdot(\mathbf{E}^\varepsilon-\mathbf{E}^{\varepsilon, (1)})
+(\mathbf{H}^\varepsilon-\mathbf{H}^{\varepsilon, (1)})\cdot(\mathbf{H}^\varepsilon-\mathbf{H}^{\varepsilon, (1)})\big\}
\mathnormal{d}\mathbf{x}\mathnormal{d}t}\\[2mm]
{\displaystyle\qquad \qquad \qquad \qquad \qquad\leq C(T) \Big\{\varepsilon
+\langle (\mathbf{J}^\varepsilon_q
-\mathbf{J}^{\varepsilon, (1)}), (\mathbf{E}^\varepsilon-\mathbf{E}^{\varepsilon, (1)})\rangle\Big\},}
\end{array}
\end{equation*}
where $ \langle \mathbf{u}, \mathbf{v}\rangle =\int_0^T\int_{\Omega} \mathbf{u}\cdot \mathbf{v}\mathnormal{d}\mathbf{x}\mathnormal{d}t $,
and $ C(T) $ is a constant independent of~$ \varepsilon $, but dependent of $ T $.
We thus get
\begin{equation*}
{\displaystyle \|\mathbf{E}^\varepsilon-\mathbf{E}^{\varepsilon, (1)}\|_{L^2(0,T; \/(L^2(\Omega))^3)}
+\|\mathbf{H}^\varepsilon-\mathbf{H}^{\varepsilon, (1)}\|_{L^2(0,T; \/(L^2(\Omega))^3)}
\rightarrow 0,\quad {\rm as}\quad \varepsilon\rightarrow 0.}
\end{equation*}
Therefore, we complete the proof of Theorem~\ref{thm3-1}.
\qquad \end{proof}

\begin{corollary}\label{cor3-1}
Suppose that\/ $ \Omega\subset \mathbb{R}^3 $ is the union of entire
periodic cells, i.e. $\overline{\Omega}=\bigcup_{\mathbf{z}\in
I_\varepsilon}\varepsilon(\mathbf{z}+\overline{Q}) $, where the index set
$I_\varepsilon=\{\mathbf{z}\in \mathbb{Z}^3,${\rm~}$\varepsilon(\mathbf{z}+\overline{Q})\subset
\overline{\Omega}\} $ and $ \varepsilon>0 $ is any fixed small parameter. Let\/
$(\Psi^\varepsilon(\mathbf{x},t), \mathbf{E}^{\varepsilon}(\mathbf{x},t),
\mathbf{H}^{\varepsilon}(\mathbf{x},t))$ be the solution of the
original problem (\ref{eq:1-6}), and let\/
$(\Psi_1^\varepsilon(\mathbf{x},t), \mathbf{E}^{\varepsilon, (1)}(\mathbf{x},t),
\mathbf{H}^{\varepsilon, (1)}(\mathbf{x},t))$ and
$(\Psi_2^\varepsilon(\mathbf{x},t), \mathbf{E}^{\varepsilon, (2)}(\mathbf{x},t),
\mathbf{H}^{\varepsilon, (2)}(\mathbf{x},t))$ be the first-order
and the second-order multiscale asymptotic solutions defined in\/
{\rm(\ref{eq:3-17})}-{\rm(\ref{eq:3-19})}, respectively. Under
the assumptions\/ {\rm$({\rm A}_1)$--$({\rm A}_4)$} and\/ {\rm$({\rm
H}_1)$--$({\rm H}_2)$}, if\/
 $(\Psi^0, \mathbf{E}^0, \mathbf{H}^0)\in H^1(0,T; \mathbb{H}^3(\Omega)\times (H^{3}(\Omega))^6)\cap H^2(0,T; $\\
 $ \mathbb{H}^2(\Omega)\times (H^2(\Omega))^6)$
 $\cap H^3(0,T; \mathbb{H}^1(\Omega)\times (H^1(\Omega))^6) $, 
 $\mathbf{f}\in H^3(0,T;(H^{1}(\Omega))^3) $, $ \Psi_0(\mathbf{x})\equiv 0 $,
 $ {\bm \varphi}={\bm\psi}\equiv 0$,
then we have
\begin{equation}\label{eq:3-30}
\begin{array}{l}
{\displaystyle
\|\Psi^\varepsilon-\Psi_s^\varepsilon\|_{L^2(0,T; \/\mathbb{H}_0^1(\Omega))}\rightarrow 0,\quad s=1,2, }\\[2mm]
{\displaystyle \|\mathbf{E}_\varepsilon-\mathbf{E}_\varepsilon^{(s)}\|_{L^\infty(0,T; \/\mathbf{H}(\mathbf{curl};\Omega))}
+\|(\mathbf{E}_\varepsilon-\mathbf{E}_\varepsilon^{(s)})_t\|_{L^\infty(0,T; \/(L^3(\Omega))^3)}\rightarrow 0,}\\[2mm]
{\displaystyle \|\mathbf{H}_\varepsilon-\mathbf{H}_\varepsilon^{(s)}\|_{L^\infty(0,T;
\/\mathbf{H}(\mathbf{curl};\Omega))}
+\|(\mathbf{H}_\varepsilon-\mathbf{H}_\varepsilon^{(s)})_t\|_{L^\infty(0,T;\/
(L^3(\Omega))^3)}\rightarrow 0,}
\end{array}
\end{equation}
as $ \varepsilon \rightarrow 0 $, where $ v_t $ denotes the derivative of $ v $ with respect to $ t $, and  $ C(T) $ is a constant independent of~$ \varepsilon $,
but dependent of $ T $.
\end{corollary}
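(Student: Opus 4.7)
The plan is to deduce Corollary \ref{cor3-1} from Theorem \ref{thm3-1} by time-differentiating the error system, exploiting the fact that the coefficient matrices $A(\mathbf{x}/\varepsilon)$, $\eta(\mathbf{x}/\varepsilon)$, $\mu(\mathbf{x}/\varepsilon)$ do not depend on $t$. Writing $z^\varepsilon_s = \Psi^\varepsilon - \Psi^\varepsilon_s$, $\mathbf{u}^\varepsilon_s = \mathbf{E}^\varepsilon - \mathbf{E}^{\varepsilon,(s)}$, $\mathbf{v}^\varepsilon_s = \mathbf{H}^\varepsilon - \mathbf{H}^{\varepsilon,(s)}$, applying $\partial_t$ to (\ref{eq:1-6}) and to the expansions (\ref{eq:3-17})-(\ref{eq:3-19}) produces error equations for $(\partial_t z^\varepsilon_s, \partial_t \mathbf{u}^\varepsilon_s, \partial_t \mathbf{v}^\varepsilon_s)$ and, after a second $\partial_t$, for $(\partial_t^2 z^\varepsilon_s, \partial_t^2 \mathbf{u}^\varepsilon_s, \partial_t^2 \mathbf{v}^\varepsilon_s)$, of exactly the structure treated in the proof of Theorem \ref{thm3-1}, with the homogenized data $(\Psi^0, \mathbf{E}^0, \mathbf{H}^0, \mathbf{f})$ replaced by their $t$-derivatives. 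The $H^2$ and $H^3$ in-time hypotheses of the corollary are designed precisely so that the Theorem \ref{thm3-1} estimate may be invoked on these differentiated systems.

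Next I exploit the zero initial conditions $\Psi_0 \equiv 0$ and $\bm\varphi \equiv \bm\psi \equiv 0$: the errors $z^\varepsilon_s, \mathbf{u}^\varepsilon_s, \mathbf{v}^\varepsilon_s$ then vanish at $t=0$, and evaluating the PDE at $t=0$ shows their first time derivatives vanish initially as well. For a function $w$ with $w(0)=0$, the identity $\|w(t)\|_X^2 = 2\,\mathrm{Re}\int_0^t (\partial_\tau w(\tau), w(\tau))_X\,d\tau \leq \|w\|_{L^2(0,T;X)}^2 + \|\partial_t w\|_{L^2(0,T;X)}^2$ promotes joint $L^2(0,T;X)$ convergence of $w$ and $\partial_t w$ into $L^\infty(0,T;X)$ convergence of $w$ alone. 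Applied with $w \in \{\mathbf{u}^\varepsilon_s, \mathbf{v}^\varepsilon_s, \partial_t \mathbf{u}^\varepsilon_s, \partial_t \mathbf{v}^\varepsilon_s\}$ and $X = (L^2(\Omega))^3$, this delivers $L^\infty(0,T;(L^2(\Omega))^3)$ convergence of the field errors together with their first time derivatives.

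For the $\mathbf{H}(\mathbf{curl};\Omega)$ bound in (\ref{eq:3-30}), I subtract the Maxwell equations satisfied by $(\mathbf{E}^{\varepsilon,(s)}, \mathbf{H}^{\varepsilon,(s)})$ (from plugging (\ref{eq:3-18})-(\ref{eq:3-19}) into (\ref{eq:1-6})) from those for $(\mathbf{E}^\varepsilon, \mathbf{H}^\varepsilon)$, obtaining
\begin{equation*}
\mathbf{curl}\,\mathbf{u}^\varepsilon_s = -\mu(\mathbf{x}/\varepsilon)\partial_t \mathbf{v}^\varepsilon_s + \mathcal{R}^\varepsilon_1, \qquad
\mathbf{curl}\,\mathbf{v}^\varepsilon_s = \eta(\mathbf{x}/\varepsilon)\partial_t \mathbf{u}^\varepsilon_s + (\mathbf{J}_q^\varepsilon - \mathbf{J}_q^{\varepsilon,(s)}) + \mathcal{R}^\varepsilon_2,
\end{equation*}
where $\mathcal{R}^\varepsilon_j$ are asymptotic residues of order $O(\varepsilon)$ in $L^\infty(0,T;(L^2(\Omega))^3)$; crucially, Lemmas \ref{lem3-2}-\ref{lem3-3} ensure that the tangential jumps of $\eta^{-1}\mathbf{curl}\,\Theta^\eta_{j,p}$ and $\mu^{-1}\mathbf{curl}\,\Theta^\mu_{j,p}$ on cell interfaces vanish, so no singular interface contributions appear. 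Combining with the $L^\infty L^2$ bounds on $\partial_t \mathbf{u}^\varepsilon_s, \partial_t \mathbf{v}^\varepsilon_s$ and an analogous $L^\infty L^2$ control of $\mathbf{J}_q^\varepsilon - \mathbf{J}_q^{\varepsilon,(s)}$ via the Schr\"odinger estimate for $z^\varepsilon_s$ and $\partial_t z^\varepsilon_s$, yields the first two convergences in (\ref{eq:3-30}). The $L^\infty(0,T;(L^3(\Omega))^3)$ control of the time derivatives follows from $L^\infty(0,T;(H^1(\Omega))^3)$ bounds on $\partial_t \mathbf{u}^\varepsilon_s, \partial_t \mathbf{v}^\varepsilon_s$, which the extra hypotheses $\mathbf{f} \in H^3(0,T;(H^1)^3)$ and the $H^3$-in-time regularity of $(\Psi^0, \mathbf{E}^0, \mathbf{H}^0)$ make available, combined with the 3D Sobolev embedding $H^1(\Omega) \hookrightarrow L^6(\Omega) \hookrightarrow L^3(\Omega)$.

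The main obstacle is closing the uniform-in-$\varepsilon$ energy estimate for the twice-differentiated system. Since $\mathbf{J}_q^\varepsilon = iN[\bar\Psi^\varepsilon A(\mathbf{x}/\varepsilon)\nabla\Psi^\varepsilon - \Psi^\varepsilon A(\mathbf{x}/\varepsilon)\nabla\bar\Psi^\varepsilon]$ is bilinear in $(\Psi^\varepsilon, \nabla\Psi^\varepsilon)$, $\partial_t^2 \mathbf{J}_q^\varepsilon$ produces cross terms coupling $\partial_t^2 \Psi^\varepsilon$ with $\nabla\Psi^\varepsilon$ and $\partial_t \Psi^\varepsilon$ with $\nabla\partial_t \Psi^\varepsilon$, which feed back into both the Schr\"odinger and the Maxwell estimates. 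Absorbing these via H\"older's inequality, the 3D embedding $H^1\hookrightarrow L^6$, the uniform bounds of Theorem \ref{thm2-1}, and Gronwall's inequality must be carried out carefully so that no spatial regularity beyond $\mathbb{H}^1 \times (H^1)^6$ of the time-derivative errors is implicitly used, and so that every constant stays independent of $\varepsilon$.
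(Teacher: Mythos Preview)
The paper gives no self-contained proof of this corollary: it consists of a single sentence, ``Following the lines of the proof of Theorem~2.7 in \cite{Cao-4}, we can complete the proof of Corollary~\ref{cor3-1}.'' Your strategy---time-differentiate the error system (coefficients are $t$-independent), invoke the Theorem~\ref{thm3-1} mechanism on the differentiated systems, use the vanishing initial data to promote $L^2(0,T;X)$ to $L^\infty(0,T;X)$, and read off $\mathbf{curl}$ of the field errors from the Maxwell equations themselves---is exactly the standard route for upgrading Theorem~\ref{thm3-1} to Corollary~\ref{cor3-1}, and is almost certainly what Theorem~2.7 of \cite{Cao-4} does for the uncoupled Maxwell system. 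So your proposal is consistent with, and far more detailed than, what the paper actually offers.

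One point deserves scrutiny. Your route to the $L^\infty(0,T;(L^3(\Omega))^3)$ control of $\partial_t\mathbf{u}^\varepsilon_s,\partial_t\mathbf{v}^\varepsilon_s$ goes through a uniform-in-$\varepsilon$ bound in $L^\infty(0,T;(H^1(\Omega))^3)$ followed by Sobolev embedding. But for rapidly oscillating coefficients the fields $\mathbf{E}^\varepsilon,\mathbf{H}^\varepsilon$ themselves are generally \emph{not} uniformly bounded in $(H^1(\Omega))^3$; what the multiscale expansion buys is cancellation so that the \emph{difference} may be better behaved, yet this cancellation is delicate at the $H^1$ level and is not delivered by the energy argument you outline (which lives naturally in $L^2$ and $\mathbf{H}(\mathbf{curl})$). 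You should either verify that the corrector construction in (\ref{eq:3-18})--(\ref{eq:3-19}), together with Lemmas~\ref{lem3-1}--\ref{lem3-3}, genuinely yields a uniform $H^1$ estimate on the time-differentiated error, or check whether the $L^3$ in the statement is in fact a misprint for $L^2$ (which would make the whole argument close cleanly in the energy spaces).
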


Following the lines of the proof of Theorem~2.7 in \cite{Cao-4}, we can complete the proof of Corollary~\ref{cor3-1}.

\begin{rem}\label{rem3-4}
We would like to emphasize that Lemmas\/~{\rm\ref{lem3-1}},
{\rm\ref{lem3-2}} and {\rm\ref{lem3-3}} are key for the
convergence analysis of the multiscale asymptotic expansions defined
in {\rm(\ref{eq:3-17})}-{\rm(\ref{eq:3-19})}, since they allow us to
obtain the strong convergence results.
\end{rem}

\begin{rem}\label{rem3-5}
By using Corollary~\ref{cor2-1} and the fact that $ \|\rho^\varepsilon-\hat{\rho}^0\|_{L^2(0,T; H^{-1}(\Omega))}\rightarrow 0 $
as $ \varepsilon\rightarrow 0 $, if the exchange-correlation potential in (\ref{eq:1-6}) is Lipschitz continuous and the corresponding Lipschitz
constant is sufficiently small, then we can obtain the similar convergence results of Theorem~\ref{thm3-1} and Corollary~\ref{cor3-1}.
However, if there is the generic exchange-correlation potential (see, e.g., \cite[p.~152-169]{Mar}),
then the convergence result of the
multiscale asymptotic method for the problem
(\ref{eq:1-6}) is not known to authors.
\end{rem}

\begin{rem}\label{rem3-6}
It should be stated that the derived convergence results in
Theorem~\ref{thm3-1}, Corollary~\ref{cor3-1} and Remark~\ref{rem3-5} are valid provided that the very strict regularity of
the solution  $(\Psi^0, \mathbf{E}^0, \mathbf{H}^0) $ of the associated homogenized problem (\ref{eq:2-5}) is satisfied.
So far, it seems to be open and challenging. Even so, the formal multiscale
asymptotic expansion is particularly useful for developing efficient
numerical methods. The numerical results presented in
Section~\ref{sec-5} strongly support the assessment. In particular, the second-order multiscale
solution is necessary and essential in some cases.
\end{rem}

\section{Multiscale numerical algorithms}\label{sec-4}

In this section, we first summarize the main steps for the homogenization method and the multiscale
method presented in the previous sections for the Maxwell-Schr\"{o}dinger
system (\ref{eq:1-6}) with rapidly oscillating discontinuous coefficients, and then we
provide the associated numerical algorithms.

According to (\ref{eq:3-17})-(\ref{eq:3-19}), the multiscale method for solving the Maxwell-\\
Schr\"{o}dinger system
(\ref{eq:1-6}) is composed of the following steps:

{\it Step }1.\, Compute the scalar cell functions $ \theta_k^a({\bm \xi}) $, $ \theta_{kl}^a({\bm \xi}) $;
$ \theta_k^\eta({\bm \xi}) $, $ \theta_{kl}^\eta({\bm \xi}) $;
$ \theta_k^\mu({\bm \xi}) $, $ \theta_{kl}^\mu({\bm \xi}) $,
$ k,l=1,2,3 $ defined in
(\ref{eq:3-1})-(\ref{eq:3-6}) and the matrix-valued cell functions
${\bm\Theta}_1^\eta({\bm \xi}) $, $ {\bm\Theta}_2^\eta({\bm \xi}) $,
${\bm\Theta}_1^\mu({\bm \xi}) $, $ {\bm\Theta}_2^\mu({\bm \xi}) $ defined in
(\ref{eq:3-7})-(\ref{eq:3-10}), respectively. Then we get the approximations of
the homogenized coefficient matrices $ \widehat{A}=(\hat{a}_{ij}) $, $ \widehat{\eta}=(\hat{\eta}_{ij}) $
and $ \widehat{\mu}=(\hat{\mu}_{ij}) $.

{\it Step} 2. \, Solve the homogenized
Maxwell-Schr\"{o}dinger system (\ref{eq:2-5}) with constant coefficients over a whole domain $ \Omega\times (0,T) $
in a coarse mesh and at a larger time step, where $ T>0 $ is arbitrary.

{\it Step} 3. \, Apply higher-order difference quotients to
compute the derivatives of the solution $ (\Psi^0(\mathbf{x},t), \mathbf{E}^0(\mathbf{x},t),
\mathbf{H}^0(\mathbf{x},t)) $ for the homogenized
Maxwell-Schr\"{o}dinger system (\ref{eq:2-5})  based on (\ref{eq:3-17})-(\ref{eq:3-19}), respectively.
For the detailed formulas, see \cite{Cao-4, Zhang-L}.

At Step 1, we employ the adaptive finite element method to solve the
boundary value problems (\ref{eq:3-1})-(\ref{eq:3-6}) with discontinuous coefficients
in the unit cell $ Q $ for computing the scalar cell functions $ \theta_k^a({\bm \xi}) $, $ \theta_{kl}^a({\bm \xi}) $;
$ \theta_k^\eta({\bm \xi}) $, $ \theta_{kl}^\eta({\bm \xi}) $;
$ \theta_k^\mu({\bm \xi}) $, $ \theta_{kl}^\mu({\bm \xi}) $,
$ k,l=1,2,3 $. For more information of the algorithm and the convergence, we refer to \cite{Zhang-L}.
Then we use the adaptive edge element method
to solve the time-harmonic Maxwell's equations (\ref{eq:3-7})-(\ref{eq:3-10}) with discontinuous coefficients in the unit cell
$ Q $ for computing the matrix-valued cell functions
${\bm\Theta}_1^\eta({\bm \xi}) $, $ {\bm\Theta}_2^\eta({\bm \xi}) $,
${\bm\Theta}_1^\mu({\bm \xi}) $, $ {\bm\Theta}_2^\mu({\bm \xi}) $.
For the details and the convergence, we refer the interested reader to
\cite{Cao-3, Cao-4, Ne-1}.

Next we focus on discussing the algorithm for solving the homogenized Maxwell-Schr\"{o}dinger
equations with constant coefficients in $ \Omega\times (0,T) $ at Step 2,
which is a nonlinear and nonconvex coupled system with constant coefficients.
We recall some important studies about the problem.
The most widely used computational method is the time-domain finite difference(FDTD) method since it is simple and easy to implement.
For example, Lorin, Chelkowski and Bandrauk \cite{Lor-1} used the finite difference method for Schr\"{o}dinger equation and
the FDTD method for Maxwell's equations to simulate intense ultrashort laser pulses interaction with a 3$\mathrm{D} $ $\mathrm{H}_2^{+}$ gas.
Pierantoni, Mencarelli and Rozzi \cite{Pi} applied the transmission line matrix method(TLM) for Maxwell's equations and the FDTD method
for the Schr\"{o}dinger equation to simulate a carbon nanotube between
two metallic electrodes. Ahmed and Li \cite{Ah-1} used the FDTD method for the Maxwell-Schr\"{o}dinger system
to simulate plasmonics nanodevices. Sato and Yabana \cite{Sa} combined the FDTD method for propagation of macroscopic electromagnetic fields
and time-dependent density functional theory (TDDFT) for quantum dynamics of electrons to simulate  interactions
between an intense laser field and a solid. For other numerical methods of the Maxwell-Schr\"{o}dinger system, we refer to
\cite{Lop, Lor-2, Oh, Tur}.

In this section, we first use the finite element method in space to discretize
the homogenized Maxwell-Schr\"{o}dinger system to the nonlinear ordinary differential system.
Then we apply the midpoint scheme to discretize the system to the nonlinear discrete system.
Finally, we combine self-consistent method (SCF) and the simple
mixed method to solve the nonlinear discrete system (see, e.g., \cite{Ha, Hu, Pul-1}).

We would like to state that we only can get the approximations of the homogenized coefficient
matrices $ \widehat{A}=(\hat{a}_{ij}) $, $ \widehat{\eta}=(\hat{\eta}_{ij}) $ and
$ \widehat{\mu}=(\hat{\mu}_{ij}) $ in the real simulation. $ \widehat{A}^{h_0} $,
$ \widehat{\eta}^{h_0} $ and $ \widehat{\mu}^{h_0} $
denote respectively the approximate values of $ \widehat{A} $, $ \widehat{\eta} $ and $ \widehat{\mu} $,
where $ h_0 $ is the mesh size for
computing the scalar cell functions $ \theta_k^a({\bm \xi}) $, $ \theta_{kl}^a({\bm \xi}) $;
$ \theta_k^\eta({\bm \xi}) $, $ \theta_{kl}^\eta({\bm \xi}) $;
$ \theta_k^\mu({\bm \xi}) $, $ \theta_{kl}^\mu({\bm \xi}) $,
$ k,l=1,2,3 $, and the matrix-valued cell functions $ {\bm\Theta}_1^\eta({\bm \xi}) $
, $ {\bm\Theta}_2^\eta({\bm \xi}) $, $ {\bm\Theta}_1^\mu({\bm \xi}) $
and $ {\bm\Theta}_2^\mu({\bm \xi}) $. It can be proved that

\begin{proposition}\label{prop4-1}
Let $ \widehat{A}=(\hat{a}_{ij}) $, $ \widehat{\eta}=(\hat{\eta}_{ij}) $ and $ \widehat{\mu}=(\hat{\mu}_{ij}) $
be the homogenized coefficient matrices
calculated by (\ref{eq:2-4}) and let $ \widehat{A}^{h_0}=(\hat{a}_{ij}^{h_0}) $,
$ \widehat{\eta}^{h_0}=(\hat{\eta}_{ij}^{h_0}) $ and $ \widehat{\mu}=(\hat{\mu}_{ij}^{h_0}) $
be the corresponding finite element approximations, respectively.
Suppose the mesh size $ h_0>0 $ is sufficiently small, then we have
\begin{equation}\label{eq:4-1}
\begin{array}{l@{}}
{\displaystyle \hat{a}_{ij}^{h_0}=\hat{a}_{ji}^{h_0},\quad \hat{\eta}_{ij}^{h_0}=\hat{\eta}_{ji}^{h_0},\quad
\hat{\mu}_{ij}^{h_0}=\hat{\mu}_{ji}^{h_0},
\quad \forall i,j=1,2,3, }\\[2mm]
{\displaystyle \max\limits_{i,j}|\hat{a}_{ij}-\hat{a}_{ij}^{h_0}|\leq C h_0^2,\quad
\max\limits_{i,j}|\hat{\eta}_{ij}-\hat{\eta}_{ij}^{h_0}|\leq C h_0^2,
\quad
\max\limits_{i,j}|\hat{\mu}_{ij}-\hat{\mu}_{ij}^{h_0}|\leq C h_0^2,}\\[2mm]
{\displaystyle \bar{\alpha}_0 |\mathbf{y}|^2\leq \hat{a}_{ij}^{h_0}y_i y_j \leq \bar{\alpha}_1 |\mathbf{y}|^2,
\quad \bar{\beta}_0 |\mathbf{y}|^2\leq \hat{\eta}_{ij}^{h_0}y_i y_j \leq \bar{\beta}_1 |\mathbf{y}|^2,}\\[2mm]
{\displaystyle \bar{\gamma}_0 |\mathbf{y}|^2\leq \hat{\mu}_{ij}^{h_0}y_i y_j \leq \bar{\gamma}_1 |\mathbf{y}|^2,
\quad
\forall \mathbf{y}=(y_1, y_2, y_3)\in \mathbb{R}^3,\quad
|\mathbf{y}|^2=y_i y_i,}
\end{array}
\end{equation}
where $ C $, $ \bar{\alpha}_0 $, $ \bar{\alpha}_1 $, $ \bar{\beta}_0 $,
$ \bar{\beta}_1 $ ,$ \bar{\gamma}_0 $, $ \bar{\gamma}_1 $ are constants independent of $ \varepsilon $, $ h_0 $;
$ h_0 $ is the final mesh size of the adaptive finite elements for computing the cell functions.
\end{proposition}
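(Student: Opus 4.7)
The three assertions bundle symmetry, an $O(h_0^2)$ coefficient error, and $h_0$-uniform ellipticity for three parallel tensors arising from the structurally identical cell problems (\ref{eq:3-1})-(\ref{eq:3-6}); accordingly I would carry out the argument in detail for $\widehat A^{h_0}$ and indicate that the proofs for $\widehat\eta^{h_0}$ and $\widehat\mu^{h_0}$ are verbatim translations. The single technical device running through everything is an \emph{energy representation} of the discrete homogenized coefficients. Introducing the correctors $\chi_i^a = x_i + \theta_i^a$ and $\chi_i^{a,h_0} = x_i + \theta_i^{a,h_0}$ and the bilinear form $a(u,v)=\int_Q a_{kl}(\xi)\,\partial_k u\,\partial_l v\,d\xi$, testing the continuous and discrete cell equations against $\theta_j^a$ and $\theta_j^{a,h_0}$ respectively, and invoking the symmetry of $(a_{kl})$ from $(A_2)$, I would derive
\[
\hat a_{ij} = \int_Q a_{kl}(\xi)\,\partial_k\chi_i^a\,\partial_l\chi_j^a\,d\xi, \qquad
\hat a_{ij}^{h_0} = \int_Q a_{kl}(\xi)\,\partial_k\chi_i^{a,h_0}\,\partial_l\chi_j^{a,h_0}\,d\xi.
\]
Both right-hand sides are manifestly symmetric in $(i,j)$, which settles the symmetry clause in (\ref{eq:4-1}).

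For the $O(h_0^2)$ rate, I set $e_i = \theta_i^a - \theta_i^{a,h_0} \in H_0^1(Q)$ and split the difference telescopically:
\[
\hat a_{ij} - \hat a_{ij}^{h_0} = \int_Q a_{kl}\,\partial_k e_i\,\partial_l\chi_j^a\,d\xi + \int_Q a_{kl}\,\partial_k\chi_i^{a,h_0}\,\partial_l e_j\,d\xi.
\]
The first integral vanishes upon using the continuous cell equation for $\theta_j^a$ with test function $e_i \in H_0^1(Q)$. In the second integral, the contribution from the discrete part $\theta_i^{a,h_0}$ is annihilated by Galerkin orthogonality $a(e_j,v_h)=0$ for $v_h \in V_{h_0}$, leaving only $\int_Q a_{il}\,\partial_l e_j\,d\xi$; a second application of the continuous cell equation for $\theta_i^a$ and one more use of Galerkin orthogonality rewrites this quantity as $-a(e_i,e_j)$. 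The resulting clean identity $\hat a_{ij} - \hat a_{ij}^{h_0} = -a(e_i,e_j)$ is the source of the superconvergence: Cauchy-Schwarz combined with $(A_2)$ yields $|\hat a_{ij}-\hat a_{ij}^{h_0}| \leq \alpha_1\|e_i\|_{H_0^1(Q)}\|e_j\|_{H_0^1(Q)}$, and the assertion reduces to the standard linear $H^1$-error estimate $\|e_i\|_{H_0^1(Q)}\leq Ch_0$ for the adaptive FEM discretization of the cell problems.

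For the uniform ellipticity, I introduce $\Phi_y^{h_0} = y_i\chi_i^{a,h_0}$ and apply the discrete energy representation to obtain $\hat a_{ij}^{h_0} y_i y_j = \int_Q a_{kl}\,\partial_k\Phi_y^{h_0}\,\partial_l\Phi_y^{h_0}\,d\xi$. The lower bound then follows from $(A_2)$ and the Cauchy-Schwarz inequality
\[
\|\nabla\Phi_y^{h_0}\|_{L^2(Q)}^2 \geq \Bigl|\int_Q\nabla\Phi_y^{h_0}\,d\xi\Bigr|^{2} = |y|^2,
\]
where the last equality uses the Dirichlet condition on $\theta_i^{a,h_0}$ together with $|Q|=1$; this gives $\bar\alpha_0 = \alpha_0$. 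The upper bound follows from the $h_0$-uniform stability $\|\nabla\Phi_y^{h_0}\|_{L^2(Q)} \leq C|y|$ furnished by Lax-Milgram applied to the discrete cell problems. The same three-step template—energy representation, Galerkin orthogonality, Jensen-type lower bound—transfers without change to $\widehat\eta^{h_0}$ and $\widehat\mu^{h_0}$.

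The main obstacle is the apparently harmless input $\|e_i\|_{H_0^1(Q)} \leq Ch_0$: with merely $L^\infty$ coefficients that are typically discontinuous across internal material interfaces, the cell correctors lack global $H^2$ regularity and the standard Cea-lemma argument breaks down on quasi-uniform meshes. This is precisely where the hypothesis that $h_0$ be \emph{sufficiently small} and the adaptive, interface-fitted nature of the triangulation advocated in Section~\ref{sec-4} enter the proof; exploiting the piecewise $H^2$-regularity of the correctors on the inclusion/matrix decomposition of $Q$ produces the linear rate, which combined with the superconvergence identity above gives the $O(h_0^2)$ rate on the homogenized coefficients, completing (\ref{eq:4-1}).
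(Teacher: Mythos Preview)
Your proposal is correct. The paper itself supplies no argument for Proposition~\ref{prop4-1} beyond referring the reader to Proposition~4.3 of \cite{Cao-1} and Proposition~3.3 of \cite{Cao-3}; the energy representation $\hat a_{ij}^{h_0}=a(\chi_i^{h_0},\chi_j^{h_0})$, the Galerkin superconvergence identity $\hat a_{ij}-\hat a_{ij}^{h_0}=-a(e_i,e_j)$, and the Jensen-type lower bound you carry out are precisely the classical ingredients used in those references, so your write-up effectively fills in what the paper omits.
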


\begin{proof}
Following the lines of the proofs of Proposition 4.3 of \cite{Cao-1} and Proposition 3.3 of \cite{Cao-3},
we can complete the proof of Proposition~\ref{prop4-1}.
\qquad
\end{proof}

In the real simulation, we will solve the modified homogenized Maxwell-Schr\"{o}dinger equations are given by
\begin{equation}\label{eq:4-2}
\left\{
\begin{array}{@{}l@{}}
{\displaystyle i\frac{\partial \Psi^{0, h_0}}{\partial t}=
-\nabla \cdot \big(\widehat{A}^{h_0}\nabla
\Psi^{0,h_0}\big)
+\big(\langle V_c\rangle-\mathbf{E}^{0,h_0}\cdot\widehat{\bm{\zeta}}
+V_{xc}[\rho^{0,h_0}]\big)
\Psi^{0,h_0},}\\[2.3mm]
{\displaystyle \qquad \qquad \qquad \qquad (\mathbf{x},t)\in
\Omega\times(0,T),}\\[2.3mm]
{\displaystyle \hat{\eta}^{h_0}\frac{\partial \mathbf{E}^{0,h_0}}{\partial t}
=\mathbf{curl}\/ \mathbf{H}^{0,h_0}+\mathbf{f}
-\mathbf{J}_q^{0,h_0},\,\, \nabla\cdot \mathbf{f}=0,\,\,(\mathbf{x},t)\in
\Omega\times(0,T),}\\[2.3mm]
{\displaystyle \hat{\mu}^{h_0}
\frac{\partial \mathbf{H}^{0,h_0}}{\partial t}=-\mathbf{curl}\/ \mathbf{E}^{0,h_0}
,\,\, (\mathbf{x},t)\in
\Omega\times(0,T),}\\[2.3mm]
{\displaystyle \nabla\cdot \big(\hat{\eta}^{h_0}\mathbf{E}^{0,h_0}\big)=\rho^{0,h_0},
\quad\nabla\cdot \big(\hat{\mu}^{h_0}\mathbf{H}^{0,h_0}\big)=0,
\,\, (\mathbf{x},t)\in
\Omega\times(0,T),}\\[2mm]
{\displaystyle  \rho^{0,h_0}=N{\vert\Psi^{0,h_0}\vert}^2
,\,\,\mathbf{J}_q^{0,h_0}=iN\big[(\overline{\Psi}^{0,h_0}) \widehat{A}^{h_0}\nabla\Psi^{0,h_0}-
\Psi^{0,h_0} \widehat{A}^{h_0}\nabla\overline{\Psi}^{0,h_0}\big].}
\end{array}
\right.
\end{equation}

Next we will analyze the difference between $ (\Psi^{0,h_0}, \mathbf{E}^{0,h_0}, \mathbf{H}^{0,h_0}) $
and $ (\Psi^{0}, \mathbf{E}^{0}, \mathbf{H}^{0}) $.
\begin{proposition}\label{prop4-2}
Let $ (\Psi^{0}, \mathbf{E}^{0}, \mathbf{H}^{0}) $ and $ (\Psi^{0,h_0}, \mathbf{E}^{0,h_0}, \mathbf{H}^{0,h_0}) $
be the solutions of the homogenized Maxwell-Schr\"{o}dinger system (\ref{eq:2-5}) without the exchange-correlation
potential and the associated modified Maxwell-Schr\"{o}dinger
system (\ref{eq:4-2}), respectively. If the mesh size $ h_0>0 $ is sufficiently small, then we prove
\begin{equation}\label{eq:4-3}
\begin{array}{@{}l@{}}
{\displaystyle \|\Psi^{0,h_0}-\Psi^0\|_{L^2(0,T; \/\mathbb{H}_0^1(\Omega))}
+\|\mathbf{E}^{0,h_0}-\mathbf{E}^{0}\|_{L^\infty(0,T; \/\mathbf{H}(\mathbf{curl}; \Omega))}}\\[2mm]
{\displaystyle \qquad+\|\mathbf{H}^{0,h_0}-\mathbf{H}^{0}\|_{L^\infty(0,T; \/\mathbf{H}(\mathbf{curl}; \Omega))}
\rightarrow 0,\quad {\rm as}\quad h_0\rightarrow 0.}
\end{array}
\end{equation}
\end{proposition}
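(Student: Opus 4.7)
The plan is to interpret Proposition~\ref{prop4-2} as a continuous-dependence-on-coefficients result for the nonlinear coupled system~(\ref{eq:2-5}), since both (\ref{eq:2-5}) and (\ref{eq:4-2}) have constant coefficients and the only difference comes from replacing the exact homogenized matrices $\widehat{A},\widehat{\eta},\widehat{\mu}$ by their adaptive finite-element approximations $\widehat{A}^{h_0},\widehat{\eta}^{h_0},\widehat{\mu}^{h_0}$. By Proposition~\ref{prop4-1} these differences are $O(h_0^2)$ in $L^\infty$ and the approximate matrices inherit uniform ellipticity, which provides the quantitative input we need.

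First I would set $u^{h_0}=\Psi^{0,h_0}-\Psi^{0}$, $\mathbf{e}^{h_0}=\mathbf{E}^{0,h_0}-\mathbf{E}^{0}$, $\mathbf{h}^{h_0}=\mathbf{H}^{0,h_0}-\mathbf{H}^{0}$, and subtract~(\ref{eq:2-5}) from~(\ref{eq:4-2}). The Schr\"odinger difference equation reads, schematically,
\begin{equation*}
i\partial_t u^{h_0}+\nabla\!\cdot(\widehat{A}^{h_0}\nabla u^{h_0})-\langle V_c\rangle u^{h_0}+(\mathbf{E}^{0,h_0}\cdot\widehat{\bm\zeta})u^{h_0}
=\nabla\!\cdot\!\bigl((\widehat{A}^{h_0}-\widehat{A})\nabla\Psi^{0}\bigr)-(\mathbf{e}^{h_0}\cdot\widehat{\bm\zeta})\Psi^{0},
\end{equation*}
while the Maxwell differences satisfy $\widehat{\eta}^{h_0}\partial_t\mathbf{e}^{h_0}=\mathbf{curl}\,\mathbf{h}^{h_0}-(\mathbf{J}_q^{0,h_0}-\mathbf{J}_q^{0})-(\widehat{\eta}^{h_0}-\widehat{\eta})\partial_t\mathbf{E}^{0}$ and $\widehat{\mu}^{h_0}\partial_t\mathbf{h}^{h_0}=-\mathbf{curl}\,\mathbf{e}^{h_0}-(\widehat{\mu}^{h_0}-\widehat{\mu})\partial_t\mathbf{H}^{0}$, with zero initial data and the same boundary conditions.

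Next I would run two energy estimates in parallel, following the same pattern already used in Theorems~\ref{thm2-1} and~\ref{thm3-1}. For the Schr\"odinger part, testing the difference equation against $u^{h_0}$, taking the imaginary part controls $\|u^{h_0}\|_{\mathbb{L}^2}$, while taking the real part together with the uniform ellipticity of $\widehat{A}^{h_0}$ from Proposition~\ref{prop4-1} yields an $\mathbb{H}^1_0$ bound; to overcome the non-coercivity produced by the sign-indefinite potential $(\mathbf{E}^{0,h_0}\!\cdot\widehat{\bm\zeta})$, I would reuse the shift-by-$\alpha$ trick together with the compact perturbation $(I+K)$-argument of Monk employed in (\ref{eq:2-18})--(\ref{eq:2-20}). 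The source terms $\|(\widehat{A}^{h_0}-\widehat{A})\nabla\Psi^{0}\|\le Ch_0^2\|\Psi^0\|_{H^1}$ are controlled via Proposition~\ref{prop4-1}, and the coupling source $(\mathbf{e}^{h_0}\!\cdot\widehat{\bm\zeta})\Psi^{0}$ is bounded by $\|\mathbf{e}^{h_0}\|_{L^2}\|\Psi^{0}\|_{L^\infty}$ since $\widehat{\bm\zeta}=-\mathbf{x}$ is bounded on the bounded domain $\Omega$. For the Maxwell part, testing Amp\`ere by $\mathbf{e}^{h_0}$ and Faraday by $\mathbf{h}^{h_0}$ and adding produces the standard identity
\begin{equation*}
\tfrac{1}{2}\tfrac{d}{dt}\bigl(\widehat{\eta}^{h_0}\mathbf{e}^{h_0},\mathbf{e}^{h_0}\bigr)+\tfrac{1}{2}\tfrac{d}{dt}\bigl(\widehat{\mu}^{h_0}\mathbf{h}^{h_0},\mathbf{h}^{h_0}\bigr)=-\bigl(\mathbf{J}_q^{0,h_0}-\mathbf{J}_q^{0},\mathbf{e}^{h_0}\bigr)+O(h_0^2),
\end{equation*}
and the coefficient-difference sources are again $O(h_0^2)$ by Proposition~\ref{prop4-1}.

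Finally I would close the loop through the nonlinear coupling. The quantum-current difference $\mathbf{J}_q^{0,h_0}-\mathbf{J}_q^{0}$ expands (as in the calculation just above (\ref{eq:3-29})) into bilinear expressions involving $u^{h_0}$, $\nabla u^{h_0}$, $\Psi^{0}$, $\nabla\Psi^{0}$, and $(\widehat{A}^{h_0}-\widehat{A})\nabla\Psi^{0}$; using the a priori $\mathbb{H}^1$-bound on $\Psi^{0,h_0}$ (obtained by testing (\ref{eq:4-2}) directly) together with the Schr\"odinger estimate for $u^{h_0}$, this term is dominated by $C(\|u^{h_0}\|_{\mathbb{H}^1_0}+h_0^2)$. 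Adding the Schr\"odinger and Maxwell energy inequalities and applying Gr\"onwall's lemma on $[0,T]$ then yields
\begin{equation*}
\|u^{h_0}\|_{L^2(0,T;\mathbb{H}^1_0(\Omega))}+\|\mathbf{e}^{h_0}\|_{L^\infty(0,T;(L^2(\Omega))^3)}+\|\mathbf{h}^{h_0}\|_{L^\infty(0,T;(L^2(\Omega))^3)}\le C(T)\,h_0^2\;\longrightarrow 0.
\end{equation*}
The improvement to the $\mathbf{H}(\mathbf{curl};\Omega)$-norm required in (\ref{eq:4-3}) is obtained by differentiating the Maxwell system in $t$ and repeating the estimate, exactly as in the proof of Corollary~\ref{cor3-1} (following Theorem~2.7 of~\cite{Cao-4}). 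The main obstacle I anticipate is precisely this last step: controlling $\mathbf{curl}\,\mathbf{e}^{h_0}$ and $\mathbf{curl}\,\mathbf{h}^{h_0}$ requires additional time-regularity of $(\Psi^{0},\mathbf{E}^{0},\mathbf{H}^{0})$ and a careful treatment of the time derivative of the nonlinear current $\mathbf{J}_q^{0,h_0}-\mathbf{J}_q^{0}$, where the coupling between the Schr\"odinger and Maxwell estimates must be iterated once more before Gr\"onwall can be closed.
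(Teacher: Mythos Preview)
Your proposal is correct but takes a genuinely different route from the paper.

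The paper does \emph{not} run a direct energy/Gr\"onwall argument on the difference $(\Psi^{0,h_0}-\Psi^0,\mathbf{E}^{0,h_0}-\mathbf{E}^0,\mathbf{H}^{0,h_0}-\mathbf{H}^0)$. Instead, it mimics the compactness proof of Theorem~\ref{thm2-1}: from the divergence constraint $(\ref{eq:4-2})_4$ it writes $\mathbf{E}^{0,h_0}=-\nabla\phi^{0,h_0}$, uses Proposition~\ref{prop4-1} to get a uniform $H^1_0$ bound on $\phi^{0,h_0}$, extracts a weakly convergent subsequence $\phi^{0,h_0}\rightharpoonup\widetilde{\phi}^0$, and introduces an \emph{auxiliary} Maxwell--Schr\"odinger system with coefficients $(\widehat{A},\widehat{\eta},\widehat{\mu})$ and electric field $\widetilde{\mathbf{E}}^0=-\nabla\widetilde{\phi}^0$ and solution $(\widetilde{\Psi}^0,\widetilde{\mathbf{E}}^0,\widetilde{\mathbf{H}}^0)$. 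It then proves $\|\Psi^{0,h_0}-\widetilde{\Psi}^0\|\to 0$ and $\|\mathbf{E}^{0,h_0}-\widetilde{\mathbf{E}}^0\|,\|\mathbf{H}^{0,h_0}-\widetilde{\mathbf{H}}^0\|\to 0$ separately (the Schr\"odinger part via (\ref{eq:2-16})--(\ref{eq:2-22}), the Maxwell part via the energy identity you wrote, but using only \emph{weak} convergence of $\mathbf{J}_q^{0,h_0}$), and finally \emph{identifies} $(\widetilde{\Psi}^0,\widetilde{\mathbf{E}}^0,\widetilde{\mathbf{H}}^0)=(\Psi^0,\mathbf{E}^0,\mathbf{H}^0)$ via the uniqueness of the homogenized Poisson problem, exactly as at the end of Theorem~\ref{thm2-1}. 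The $\mathbf{H}(\mathbf{curl})$ upgrade is then referred to Corollary~\ref{cor3-1}, as you also do.

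What each approach buys: the paper's compactness/identification route reuses the machinery of Theorem~\ref{thm2-1} almost verbatim and sidesteps the need to close the nonlinear Schr\"odinger--Maxwell loop quantitatively (weak convergence of the current suffices in the Maxwell energy), but it yields only qualitative convergence. Your direct subtraction-plus-Gr\"onwall argument is more elementary, avoids subsequence extraction and the auxiliary system altogether, and delivers a quantitative rate $O(h_0^2)$, at the cost of having to bound the bilinear coupling terms carefully; note in particular that your estimate $\|(\mathbf{e}^{h_0}\!\cdot\widehat{\bm\zeta})\Psi^0\|\le C\|\mathbf{e}^{h_0}\|_{L^2}\|\Psi^0\|_{L^\infty}$ needs $\Psi^0\in L^\infty(\Omega)$, which in three dimensions does not follow from $\mathbb{H}^1_0$ alone and requires either the higher regularity assumed in Theorem~\ref{thm3-1}/Corollary~\ref{cor3-1} or a Sobolev splitting such as $L^3\times L^6$.
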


\begin{proof}
From $ (\ref{eq:4-3})_4 $, setting $ \mathbf{E}^{0,h_0}=-\nabla \phi^{0,h_0} $,
we have
\begin{equation}\label{eq:4-4}
\left\{
\begin{array}{@{}l@{}}
{\displaystyle -\nabla\cdot \big(\hat{\eta}^{h_0}\nabla \phi^{0,h_0}\big)=\rho^{0,h_0},\quad
x\in \Omega,}\\[2mm]
{\displaystyle \phi^{0,h_0}=0,\quad x\in \partial \Omega.}
\end{array}
\right.
\end{equation}

Thanks to Proposition~\ref{prop4-1}, we get $ \|\phi^{0,h_0}\|_{H_0^1(\Omega)}\leq C
\|\rho^{0,h_0}\|_{H^{-1}(\Omega)} $, where $ C $ is a constant independent of $ h_0 $.
Therefore, for any fixed $ t\in (0,T) $,
there is a subsequence, without confusion still denoted by $ \phi^{0,h_0} $,
such that
\begin{equation}\label{eq:4-5}
{\displaystyle\phi^{0,h_0} \stackrel{\mathrm{w}}{\rightharpoonup}\widetilde{\phi}^0 \quad \hbox{weakly\,\,\,in}
\quad H^1_0(\Omega) \quad \hbox{as}\quad
h_0\rightarrow 0.}
\end{equation}

Furthermore, we have
\begin{equation}\label{eq:4-6}
{\displaystyle \mathbf{E}^{0,h_0}=-\nabla \phi^{0,h_0}
\stackrel{\mathrm{w}}{\rightharpoonup}-\nabla\widetilde{\phi}^0=\widetilde{\mathbf{E}}^0 \quad \hbox{weakly\,\,\,in}
\quad (L^2(\Omega))^3 \quad \hbox{as}\quad
h_0\rightarrow 0.}
\end{equation}

Let $ (\widetilde{\Psi}^0(\mathbf{x},t), \widetilde{\mathbf{E}}^0(\mathbf{x},t), \widetilde{\mathbf{H}}^0(\mathbf{x},t)) $ be the solution of the following Maxwell-Schr\"{o}dinger equations:
\begin{equation}\label{eq:4-7}
\left\{
\begin{array}{@{}l@{}}
{\displaystyle i\frac{\partial \widetilde{\Psi}^0}{\partial t}=
-\nabla \cdot \big(\widehat{A}\nabla
\widetilde{\Psi}^0\big)
+\big(\langle V_c\rangle-\widetilde{\mathbf{E}}^0\cdot\widehat{\bm{\zeta}}\big)
\widetilde{\Psi}^0,\quad(\mathbf{x},t)\in\Omega\times(0,T).}\\[2.3mm]
{\displaystyle \widehat{\eta}\frac{\partial \widetilde{\mathbf{E}}^0}{\partial t}
=\mathbf{curl}\/ \widetilde{\mathbf{H}}^0+\mathbf{f}
-\widetilde{\mathbf{J}}_q^0,\,\, \nabla\cdot \mathbf{f}=0,\,\,(\mathbf{x},t)\in
\Omega\times(0,T),}\\[2.3mm]
{\displaystyle \widehat{\mu}
\frac{\partial \widetilde{\mathbf{H}}^0}{\partial t}=-\mathbf{curl}\/ \widetilde{\mathbf{E}}^0
,\,\, (\mathbf{x},t)\in
\Omega\times(0,T),}\\[2.3mm]
{\displaystyle \nabla\cdot \big(\widehat{\eta}\widetilde{\mathbf{E}}^0\big)=\tilde{\rho}^0,
\quad\nabla\cdot \big(\widehat{\mu}\widetilde{\mathbf{H}}^0\big)=0,
\,\, (\mathbf{x},t)\in
\Omega\times(0,T),}
\end{array}
\right.
\end{equation}
where $ \widetilde{\rho}^0=N|\widetilde{\Psi}^0|^2 $ and
$ \widetilde{\mathbf{J}}_q^0=
iN\big[\overline{\widetilde{\Psi}^0} \widehat{A}\nabla\widetilde{\Psi}^0-
\widetilde{\Psi}^0 \widehat{A}\nabla\overline{\widetilde{\Psi}^0}\big] $.

Subtracting $(\ref{eq:4-7})_1 $ from $ (\ref{eq:4-2})_1 $, we get
\begin{equation}\label{eq:4-8}
\begin{array}{@{}l@{}}
{\displaystyle i\frac{\partial (\widetilde{\Psi}^0-\Psi^{0,h_0})}{\partial t}
=-\nabla\cdot \big(\widehat{A}\nabla(\widetilde{\Psi}^0-\Psi^{0,h_0})\big)
+\big(\langle V_c\rangle-\widetilde{\mathbf{E}}^0\cdot \widehat{\bm \zeta}\big)
(\widetilde{\Psi}^0-\Psi^{0,h_0})}\\[2mm]
{\displaystyle\quad -\nabla\cdot\big((\widehat{A}-\widehat{A}^{h_0})\nabla
\Psi^{0,h_0}\big)-\big((\widetilde{\mathbf{E}}^0-\mathbf{E}^{0,h_0})\cdot\widehat{\bm \zeta}\big)
\Psi^{0,h_0},\quad(\mathbf{x},t)\in\Omega\times(0,T).}
\end{array}
\end{equation}

Following the lines of (\ref{eq:2-16})-(\ref{eq:2-22}) and using (\ref{eq:4-6}) and Proposition~\ref{prop4-1},
we prove
\begin{equation}\label{eq:4-9}
\begin{array}{@{}l@{}}
{\displaystyle \|\Psi^{0,h_0}-\widetilde{\Psi}^0\|_{L^2(0,T; \/\mathbb{H}_0^1(\Omega))}
\leq C \big\{h_0^2+\|\widetilde{\mathbf{E}}^0-\mathbf{E}^{0,h_0}\|_{L^2(0,T; \/(H^{-1}(\Omega))^3)}\big\}
\rightarrow 0,}
\end{array}
\end{equation}
as $ h_0\rightarrow 0 $.

Furthermore, we have
\begin{equation}\label{eq:4-10}
\begin{array}{@{}l@{}}
{\displaystyle \mathbf{J}_q^{0,h_0}\stackrel{\mathrm{w}}{\rightharpoonup}\widetilde{\mathbf{J}}_q^0 , \,\,
{\rm weakly \,\, in }\,\, L^2(0,T;\/ (L^2(\Omega))^3),\,\,\, {\rm as}\,\, \,h_0\rightarrow 0.}
\end{array}
\end{equation}

Subtracting (\ref{eq:4-7}) from (\ref{eq:4-2}) gives
\begin{equation}\label{eq:4-11}
\left\{
\begin{array}{@{}l@{}}
{\displaystyle \widehat{\eta}\frac{\partial (\widetilde{\mathbf{E}}^0-\mathbf{E}^{0,h_0})}{\partial t}
+\big(\widehat{\eta}-\widehat{\eta}^{h_0}\big)\frac{\partial \mathbf{E}^{0,h_0}}{\partial t}
=\mathbf{curl}\/ \big(\widetilde{\mathbf{H}}^0-\mathbf{H}^{0,h_0}\big)}\\[2.3mm]
{\displaystyle\qquad \qquad\qquad \qquad-\big(\widetilde{\mathbf{J}}_q^0-\mathbf{J}_q^{0,h_0}\big),\,\,(\mathbf{x},t)\in
\Omega\times(0,T),}\\[2.3mm]
{\displaystyle \widehat{\mu}
\frac{\partial (\widetilde{\mathbf{H}}^0-\mathbf{H}^{0,h_0})}{\partial t}
+\big(\widehat{\mu}-\widehat{\mu}^{h_0}\big)\frac{\partial \mathbf{H}^{0,h_0}}{\partial t}
=-\mathbf{curl}\/ \big(\widetilde{\mathbf{E}}^0-\mathbf{E}^{0,h_0}\big),}\\[2.3mm]
{\displaystyle \qquad \qquad \qquad \qquad \qquad \qquad(\mathbf{x},t)\in
\Omega\times(0,T).}
\end{array}
\right.
\end{equation}

Multiplying $ (\ref{eq:4-11})_1 $ by $
(\widetilde{\mathbf{E}}^0-\mathbf{E}^{0,h_0}) $
and $ (\ref{eq:4-11})_2 $ by $
(\widetilde{\mathbf{H}}^0-\mathbf{H}^{0,h_0}) $,
and integrating on $ (0,t)\times \Omega $, we obtain
\begin{equation}\label{eq:4-12}
\begin{array}{@{}l@{}}
{\displaystyle \int_0^t \int_{\Omega}
\widehat{\eta}\frac{\partial
(\widetilde{\mathbf{E}}^0-\mathbf{E}^{0,h_0})}{
\partial \tau}\cdot (\widetilde{\mathbf{E}}^0-\mathbf{E}^{0,h_0})
\mathnormal{d}\mathbf{x}\mathnormal{d}\tau}\\[2.2mm]
{\displaystyle\quad+\int_0^t \int_{\Omega}
\widehat{\mu}\frac{\partial (\widetilde{\mathbf{H}}^0-\mathbf{H}^{0,h_0})}
{\partial\tau}\cdot (\widetilde{\mathbf{H}}^0-\mathbf{H}^{0,h_0})
\mathnormal{d}\mathbf{x}\mathnormal{d}\tau}\\[2.2mm]
{\displaystyle \quad=-\int_0^t \int_{\Omega}
(\widehat{\eta}-\widehat{\eta}^{h_0})\frac{\partial \mathbf{E}^{0,h_0}}
{\partial t}\cdot (\widetilde{\mathbf{E}}^0-\mathbf{E}^{0,h_0})
\mathnormal{d}\mathbf{x}\mathnormal{d}\tau}\\[2.2mm]
{\displaystyle \quad-\int_0^t \int_{\Omega}
(\widehat{\mu}-\widehat{\mu}^{h_0})\frac{\partial \mathbf{H}^{0,h_0}}
{\partial t}\cdot (\widetilde{\mathbf{H}}^0-\mathbf{H}^{0,h_0})
\mathnormal{d}\mathbf{x}\mathnormal{d}\tau}\\[2.2mm]
{\displaystyle \quad-\int_0^t \int_{\Omega}
(\widetilde{\mathbf{J}}_q^0-\mathbf{J}_q^{0,h_0})\cdot (\widetilde{\mathbf{E}}^0-\mathbf{E}^{0,h_0})
\mathnormal{d}\mathbf{x}\mathnormal{d}\tau.}
\end{array}
\end{equation}

It follows from (\ref{eq:4-10}) and Proposition~\ref{prop4-1} that
\begin{equation*}
{\displaystyle \|\mathbf{E}^{0,h_0}-\widetilde{\mathbf{E}}^0\|_{L^\infty(0,T; \/(L^2(\Omega))^3)}
+\|\mathbf{H}^{0,h_0}-\widetilde{\mathbf{H}}^0\|_{L^\infty(0,T; \/(L^2(\Omega))^3)}
\rightarrow 0,\,\,\, {\rm as}\,\,\, h_0\rightarrow 0.}
\end{equation*}

Following the lines of the proof of Corollary~\ref{cor3-1}, we can prove
\begin{equation*}
{\displaystyle \|\mathbf{E}^{0,h_0}-\widetilde{\mathbf{E}}^0\|_{L^\infty(0,T; \/\mathbf{H}(\mathbf{curl}; \Omega))}
+\|\mathbf{H}^{0,h_0}-\widetilde{\mathbf{H}}^0\|_{L^\infty(0,T;\/ \mathbf{H}(\mathbf{curl}; \Omega))}
\rightarrow 0,\,\,\, {\rm as}\,\,\, h_0\rightarrow 0.}
\end{equation*}

We recall (\ref{eq:4-4}), and define $ \widetilde{\phi}^{0,h_0}(x) $ is the solution of the following
elliptic equation:
\begin{equation}\label{eq:4-13}
\left\{
\begin{array}{@{}l@{}}
{\displaystyle -\nabla\cdot \big(\hat{\eta}^{h_0}\nabla \widetilde{\phi}^{0,h_0}\big)=\widetilde{\rho}^0,\quad
x\in \Omega,}\\[2mm]
{\displaystyle \widetilde{\phi}^{0,h_0}=0,\quad x\in \partial \Omega,}
\end{array}
\right.
\end{equation}
where $ \widetilde{\rho}^0=N|\widetilde{\Psi}^0|^2 $. Using Proposition~\ref{prop4-1} and the fact
that $ \|\rho^{0,h_0}-\widetilde{\rho}^0\|_{H^{-1}(\Omega)}\rightarrow 0 $ as $ h_0\rightarrow 0 $,
we obtain
\begin{equation}\label{eq:4-14}
{\displaystyle\|\widetilde{\phi}^{0,h_0}-\phi^{0,h_0}\|_{H^1_0(\Omega)}\leq C
\|\rho^{0,h_0}-\widetilde{\rho}^0\|_{H^{-1}(\Omega)}\rightarrow 0, \quad \hbox{as}\quad
h_0\rightarrow 0.}
\end{equation}
Combining (\ref{eq:4-5}) and (\ref{eq:4-14}) implies
$ \widetilde{\phi}^0(\mathbf{x},t)=\widetilde{\phi}^0(\widetilde{\rho}^0) $. Using the uniqueness of the solution of the Poisson equation
$(\ref{eq:2-5})_4 $
for the homogenized Maxwell-Schr\"{o}dinger system without the exchange-correlation potential,
the convergence (\ref{eq:4-4}) takes place for the whole sequences. Therefore,
we get $ \phi^0=\phi^0(\rho^0) $. From this, we obtain
\begin{equation}\label{eq:4-15}
\begin{array}{l}
{\displaystyle \widetilde{\Psi}^0=\Psi^0,\,\,\, \widetilde{\mathbf{E}}^0=\mathbf{E}^0,\,\,\,
\widetilde{\mathbf{H}}^0=\mathbf{H}^0.}
\end{array}
\end{equation}
Therefore, we complete the proof of Proposition~\ref{prop4-2}.
\qquad \end{proof}

\begin{rem}\label{rem4-1}
Similarly, if the exchange-correlation potential in (\ref{eq:1-6}) is Lipschitz continuous and the corresponding Lipschitz
constant is sufficiently small, then we can obtain the similar convergence results of Proposition~\ref{prop4-2}.
However, for the generic exchange-correlation potential (see, e.g., \cite[p.~152-169]{Mar}), it seems to be open.
\end{rem}

In this paper, we solve the following homogenized Maxwell-Schr\"{o}dinger equations with constant coefficients
instead of (\ref{eq:4-2}):
\begin{equation}\label{eq:4-16}
\left\{
\begin{array}{@{}l@{}}
{\displaystyle i\frac{\partial \Psi^{0, h_0}}{\partial t}=
-\nabla \cdot \big(\widehat{A}^{h_0}\nabla
\Psi^{0,h_0}\big)
+\big(\langle V_c\rangle-\mathbf{E}^{0,h_0}\cdot\widehat{\bm{\zeta}}
+V_{xc}[\rho^{0,h_0}]\big)
\Psi^{0,h_0},}\\[2.2mm]
{\displaystyle \qquad \qquad \qquad \qquad (\mathbf{x},t)\in
\Omega\times(0,T),}\\[2.2mm]
{\displaystyle \widehat{\eta}^{h_0}\frac{\partial^2 \mathbf{E}^{0,h_0}}{\partial t^2}
+\mathbf{curl}\/ \big((\widehat{\mu}^{h_0})^{-1}\mathbf{curl}\/ \mathbf{E}^{0,h_0}\big)=\mathbf{F}
-\frac{\partial \mathbf{J}_q^{0,h_0}}{\partial t},}\\[2.2mm]
{\displaystyle \mathbf{F}=\frac{\partial \mathbf{f}}{\partial t},\quad\nabla\cdot \mathbf{F}=0,\quad(\mathbf{x},t)\in
\Omega\times(0,T),}\\[2.2mm]
{\displaystyle  \rho^{0,h_0}=N{\vert\Psi^{0,h_0}\vert}^2
,\,\,\mathbf{J}_q^{0,h_0}=iN\big[(\overline{\Psi}^{0,h_0}) \widehat{A}^{h_0}\nabla\Psi^{0,h_0}-
\Psi^{0,h_0} \widehat{A}^{h_0}\nabla\overline{\Psi}^{0,h_0}\big],}\\[2.2mm]
{\displaystyle \Psi^{0,h_0}(\mathbf{x},t)=0,\quad \mathbf{E}^{0,h_0}(\mathbf{x},t)\times \mathbf{n}=0,\quad
(\mathbf{x},t)\in \partial \Omega \times (0,T),}\\[2.2mm]
{\displaystyle \Psi^{0,h_0}(\mathbf{x},0)=\Psi_0(\mathbf{x}),\quad
\mathbf{E}^{0, h_0}(\mathbf{x}, 0)=\mathbf{E}_0(\mathbf{x}),\quad
\frac{\partial \mathbf{E}^{0,h_0}(\mathbf{x},0)}{\partial t}=\mathbf{E}_1(\mathbf{x}),}
\end{array}
\right.
\end{equation}
where $ \mathbf{E}_1(\mathbf{x})=\big\{(\widehat{\eta}^{h_0})^{-1}\big(\mathbf{curl}\/ \mathbf{H}^{0,h_0}+\mathbf{f}
-\mathbf{J}_q^{0,h_0}\big)\big\}|_{t=0} $.

The variational form of  (\ref{eq:4-16}) is written as
\begin{equation}\label{eq:4-17}
\left\{
\begin{array}{l@{}}
{\displaystyle (i\frac{\partial \Psi^{0,h_0}}{\partial t},\varphi)
=a(\mathbf{E}^{0,h_0};\Psi^{0,h_0},\varphi),\quad \forall  \varphi=\varphi_1+i\varphi_2,\quad i^2=-1,\quad  \varphi_1, \varphi_2\in
H^1_0(\Omega),} \\[2mm]
{\displaystyle \langle \widehat{\eta}^{h_0}\frac{\partial^{2}\mathbf{E}^{0,h_0}}{\partial t^2},\mathbf{v}\rangle 
+b(\mathbf{E}^{0, h_0},\mathbf{v})
=\langle \mathbf{F},\mathbf{v}\rangle-\langle\frac{\partial \mathbf{J}_q^{0, h_0}}{\partial t},\mathbf{v}\rangle,\quad \forall
\mathbf{v}\in \mathbf{H}_0(\mathbf{curl};\Omega),}
\end{array}
\right.
\end{equation}
where
\begin{equation*}
\begin{array}{l@{}}
{\displaystyle a(\mathbf{E}^{0,h_0}; \Psi^{0,h_0}, \varphi)=\int_\Omega\big\{\widehat{A}^{h_0}\nabla
\Psi^{0,h_0}\cdot\nabla\bar{\varphi}+(\langle V_c \rangle-\mathbf{E}^{0,h_0}\cdot\widehat{\bm{\zeta}}
+V_{xc}[\rho^{0,h_0}])\Psi^{0, h_0}\bar{\varphi} \big\} \mathnormal{d}\mathbf{x},}\\[2mm]
{\displaystyle b(\mathbf{u},\mathbf{v})=\int_\Omega
(\widehat{\mu}^{h_0})^{-1}\mathbf{curl\/u}\cdot\mathbf{curl}\/\mathbf{v} \mathnormal{d}\mathbf{x},\quad
 (\psi, \varphi)=\int_\Omega \psi \bar{\varphi} \mathnormal{d}\mathbf{x},\quad
 \langle \mathbf{u}, \mathbf{v}\rangle=\int_\Omega
\mathbf{u}\cdot \mathbf{v} \mathnormal{d}\mathbf{x}.}
\end{array}
\end{equation*}

Let $\tau_{h}=\{e\}$ be a regular family of tetrahedrons of a whole
domain $\Omega$ and $h=\underset {e}{max}\{h_{e}\}$.
We define the linear finite element space of $H^1_0(\Omega)$,
\begin{equation}\label{eq:4-18}
{\displaystyle\mathcal{U}_h(\Omega)=\{u_h\in C(\overline{\Omega}):\,\,\, u_h|_{e}\in
P_1,\,\,\, u_h|_{\partial \Omega}=0\},}
\end{equation}
and the finite element space of $ \mathbf{H}_{0}(\mathbf{curl};\Omega)$
consisting of degree$-k$ edge elements by
\begin{equation}\label{eq:4-19}
{\displaystyle\mathcal{W}_h(\Omega)=\{\mathbf{w}_h\in \mathbf{H}(\mathbf{curl};\Omega):
\,\,\, \mathbf{w}_h|_e\in R_k,\,\,\mathbf{w}_h\times \mathbf{n}=0 \,\, \hbox{on}\,\,
\partial \Omega \},}
\end{equation}
where $ \mathbf{n} $ is the outward unit normal to the boundary $
\partial \Omega $ and $ R_k $ is defined in (5.32) of (\cite[p.~128]{Monk}).

As usual, the complex function $ \Psi_h^{0,h_0} $ is decomposed into two parts:
$ \Psi_h^{0,h_0}=\Psi_{h,R}^{0,h_0}+i\Psi_{h, I}^{0,h_0} $, where $ i^2=-1 $, $ \Psi_{h,R}^{0,h_0} $ and
$\Psi_{h, I}^{0,h_0} $ are the real and the imaginary part of $ \Psi_h^{0,h_0} $, respectively.
The semi-discrete scheme for solving the problem
(\ref{eq:4-16}) is as follows:
Find $ \Psi_{h, R}^{0, h_0},  \Psi_{h, I}^{0, h_0}\in L^2(0,T; \/\mathcal{U}_h(\Omega))$ and
$\mathbf{E}_h^{0,h_0}\in L^2(0,T; \mathcal{W}_h(\Omega))$
with $\Psi_{h, R}^{0, h_0}(0)=\Psi^R_{0 h},\, \Psi_{h, I}^{0, h_0}(0)=\Psi^I_{0 h}\in \mathcal{U}_h(\Omega)$,
$\mathbf{E}_h^{0, h_0}(0)=\mathbf{E}_{0 h}\in \mathcal{W}_h(\Omega)$,
$\partial_t \mathbf{E}_h^{0,h_0}(0)=\mathbf{E}_{1 h}\in
\mathcal{W}_h(\Omega)$ such that
\begin{equation}\label{eq:4-20}
\left\{
\begin{array}{l@{}}
{\displaystyle (i\frac{d \Psi_h^{0,h_0}}{d t}, u_h+i v_h)=a(\mathbf{E}_{h}^{0,h_0};
\Psi_h^{0, h_0}, u_h+i v_h),\quad \forall  u_h,\, v_h \in \mathcal{U}_h(\Omega),} \\[2mm]
{\displaystyle \langle\frac{d^2}{d t^2}{\mathbf{E}}_h^{0, h_0}(\mathbf{x},t),
\mathbf{w}_{h}\rangle+b({\mathbf{E}}_h^{0, h_0}(\mathbf{x},t), \mathbf{w}_h)
=\langle\mathbf{F},\mathbf{w}_h\rangle
-\langle\frac{d\mathbf{J}_q^{0,h_0}}{dt},\mathbf{w}_{h}\rangle,}\\[2mm]
{\displaystyle \qquad \qquad  \forall
\mathbf{w}_h\in \mathcal{W}_h(\Omega),\,\, t\in (0,T)},
\end{array}
\right.
\end{equation}
where $a(\mathbf{E}; \Psi, \varphi) $ and  $ b(\mathbf{u}, \mathbf{v}) $ have been defined above.
$\Psi_{0 h}^R $ and $\Psi_{0 h}^I $  are respectively the projection of $ \Psi_0^R $ and $ \Psi_0^I $
in the subspace $ \mathcal{U}_h(\Omega) $;
$ \mathbf{E}_{0 h} $ and $\mathbf{E}_{1 h} $ are the projections of
$ \mathbf{E}_0 $ and $\mathbf{E}_1 $ in the subspace $ \mathcal{W}_h(\Omega) $,
respectively.

\tikzstyle{long} = [rectangle, draw, fill=red!20, 
    text width=8em, text centered, rounded corners, minimum height=2em]
\tikzstyle{decision} = [ rectangle, draw, fill=red!20, 
    text width=16em, text centered,  rounded corners, node distance=3cm, inner sep=0pt]
\tikzstyle{block} = [rectangle, draw, fill=blue!20, 
    text width=8em, text centered, rounded corners, minimum height=2em]
\tikzstyle{line} = [draw, -latex']

\begin{figure}[!htbp]
\begin{tikzpicture}[node distance = 2cm, auto]
    \node[block](input){given $(\mathbf{E}_h^{n+1})^{(0)}$};
     \node[block,below of = input, node distance = 1.5cm](given){get $(\mathbf{E}_h^{n+1})^{(k)}$};
       \node [block, below of=given] (solve) { solve Schr{o}dinger equation through inner iteration};

    \node [block, below of=solve] (solve1) { compute $ \mathbf{J}_{q}^{n+1}$};
      \node [block, below of=solve1] (solve2) { solve Maxwell equation, get $(\mathbf{E}_h^{n+1})^{(k+1)}$};
        \node [block, left of=solve2, node distance=4cm] (update) { $k = k+1$};
         \node [decision, below of=solve2, node distance = 2cm] (decide) {$\|(\mathbf{E}_h^{n+1})^{(k+1)}-(\mathbf{E}_h^{n+1})^{(k)}\| > tol$};
         \node [block, below of=decide] (next) {turn to the next time step};

    \path [line] (input) -- (given);
    \path [line] (given) -- (solve);
       \path [line] (solve) -- (solve1);
    \path [line] (solve1) -- (solve2);
     \path [line] (solve2) -- (decide);
    \path [line] (decide) -| node [near start] {no} (update);
    \path [line] (update) |- (given);
    \path [line] (decide) -- node {yes}(next);
\end{tikzpicture}
\caption{The flowchart of the exterior-circle iteration}\label{fig4-1}
\end{figure}

\tikzstyle{long} = [rectangle, draw, fill=red!20,
    text width=8em, text centered, rounded corners, minimum height=2em]
\tikzstyle{decision} = [ rectangle, draw, fill=red!20,
    text width=16em, text centered,  rounded corners, node distance=3cm, inner sep=0pt]
\tikzstyle{block} = [rectangle, draw, fill=blue!20,
    text width=8em, text centered, rounded corners, minimum height=2em]
\tikzstyle{line} = [draw, -latex']

\begin{figure}[htb]
\begin{tikzpicture}[node distance = 2cm, auto]
     \node[block](input){given $(\rho_{h}^{n+1})^{(0)}$};
     \node[block, below of = input, node distance = 1.5cm](given){get $(\rho_{h}^{n+1})^{(k)}$};
       \node [block, below of=given, node distance = 1.5cm] (solve) { compute  $V_{xc}$};

    \node [block, below of=solve] (solve1) { solve the linearized schrodinger equation};
      \node [block, below of=solve1] (solve2) { compute $(\rho_{h}^{n+1})^{(k+1)}$};
        \node [block, left of=solve2, node distance=4cm] (update) { $k = k+1$};
         \node [decision, below of=solve2, node distance = 1.5cm] (decide) {$\|(\rho_{h}^{n+1})^{(k+1)}-(\rho_{h}^{n+1})^{(k)}\| > tol$};
         \node [block, below of=decide] (next) {return to the outer iteration};

    \path [line] (input) -- (given);
    \path [line] (given) -- (solve);
       \path [line] (solve) -- (solve1);
    \path [line] (solve1) -- (solve2);
     \path [line] (solve2) -- (decide);
    \path [line] (decide) -| node [near start] {no} (update);
    \path [line] (update) |- (given);
    \path [line] (decide) -- node {yes}(next);
\end{tikzpicture}
\caption{The flowchart of the interior-circle iteration}\label{fig4-2}
\end{figure}

\tikzstyle{long} = [rectangle, draw, fill=red!20,
    text width=8em, text centered, rounded corners, minimum height=2em]
\tikzstyle{decision} = [ rectangle, draw, fill=red!20,
    text width=16em, text centered,  rounded corners, node distance=3cm, inner sep=0pt]
\tikzstyle{block} = [rectangle, draw, fill=blue!20,
    text width=8em, text centered, rounded corners, minimum height=2em]
\tikzstyle{line} = [draw, -latex']

For the  semi-discrete system (\ref{eq:4-20}), we employ the Crank-Nicolson scheme
to discretize it and get the nonlinear full-discrete system. Then we use the exterior-circle
and interior-circle iterative methods to solve it, respectively. The computational procedure is
briefly described. First, we apply the exterior iterative method to solve the full-discrete
system of (\ref{eq:4-20}). Second, for each time step, we combine the self-consistent iterative method (SCF)
and the simple mixed method to solve the discrete system of the time-dependent Schr\"{o}dinger equation.
The detailed procedures are displayed in Figs.~\ref{fig4-1} and ~\ref{fig4-2}, respectively.

\begin{rem}
As for the convergence and stable analysis of the above numerical algorithms, it is a hard task. Due to space limitations,
we will study these problems in another paper.
\end{rem}

\section{Numerical tests}\label{sec-5}

To validate the developed multiscale algorithm in this paper, we
present numerical simulations for the following case studies.

\begin{exam}\label{exam5-1}
We consider the following Maxwell-Schr\"{o}dinger system
with rapidly oscillating discontinuous coefficients:
\begin{equation}\label{eq:5-1}
\left\{
\begin{array}{@{}l@{}}
{\displaystyle i\frac{\partial \Psi^\varepsilon}{\partial t}=
-\nabla \cdot \big(A(\frac{\mathbf{x}}{\varepsilon})\nabla
\Psi^\varepsilon\big)
+\big(V_c(\frac{\mathbf{x}}{\varepsilon})-\mathbf{E}^\varepsilon\cdot \widehat{\bm\zeta}
+V_{xc}[\rho^\varepsilon]\big)
\Psi^\varepsilon,\,\, (\mathbf{x},t)\in
\Omega\times(0,T),}\\[2.2mm]
{\displaystyle \eta(\frac{\mathbf{x}}{\varepsilon})\frac{\partial^2 \mathbf{E}^\varepsilon}{\partial t^2}
+\mathbf{curl}\/ \big((\mu(\frac{\mathbf{x}}{\varepsilon}))^{-1}\mathbf{curl}\/ \mathbf{E}^\varepsilon\big)=\mathbf{F}
-\frac{\partial \mathbf{J}_q^\varepsilon}{\partial t},\,\,\,\nabla\cdot \mathbf{F}=0,\,\,\,(\mathbf{x},t)\in
\Omega\times(0,T),}\\[2.2mm]
{\displaystyle  \widehat{\bm\zeta}=-\mathbf{x},\,\,\rho^\varepsilon=N{\vert\Psi^\varepsilon\vert}^2
,\,\,\mathbf{J}_q^\varepsilon=iN\big[(\overline{\Psi}^\varepsilon) A(\frac{\mathbf{x}}{\varepsilon})\nabla\Psi^\varepsilon-
\Psi^\varepsilon  A(\frac{\mathbf{x}}{\varepsilon})\nabla\overline{\Psi}^\varepsilon\big],}\\[2.2mm]
{\displaystyle \Psi^\varepsilon(\mathbf{x},t)=0,\quad \mathbf{E}^\varepsilon(\mathbf{x},t)\times \mathbf{n}=0,\quad
(\mathbf{x},t)\in \partial \Omega \times (0,T),}\\[2.2mm]
{\displaystyle \Psi^\varepsilon(\mathbf{x},0)=\Psi_0(\mathbf{x}),\quad
\mathbf{E}^\varepsilon(\mathbf{x}, 0)=\mathbf{E}_0(\mathbf{x}),\quad
\frac{\partial \mathbf{E}^\varepsilon(\mathbf{x},0)}{\partial t}=\mathbf{E}_1(\mathbf{x}).}
\end{array}
\right.
\end{equation}
In this example, there is not the exchange-correlation potential in (\ref{eq:5-1}), i.e. $ V_{xc} = 0 $.
and we take the matrix $ \eta(\frac{\displaystyle \mathbf{x}}{\displaystyle \varepsilon})\equiv I_3 $, where
$ I_3 $ is an $ 3\times 3 $ identity matrix. A whole domain $\Omega $ and the unit cell $ Q $ are shown in Fig.~\ref{fig5-1}:(a) and (b), respectively.
We take $ \varepsilon=\frac{\displaystyle 1}{\displaystyle 8} $, $N=10$, $T = 0.5$,
 $\mathbf{f}(\mathbf{x},t)=(f_1(\mathbf{x},t), f_2(\mathbf{x},t), f_3(\mathbf{x},t))^T $,
where $ f_1=1000(1-\cos(\pi t))(y^{2}+1), f_2=1000(1-\cos(\pi t))(z^{2}+1), f_3= 1000(1-\cos(\pi t))(x^{2}+1)$, $ \mathbf{E}_0(\mathbf{x})=0 $, $ \mathbf{E}_1(\mathbf{x})=0 $.
Let $ A(\frac{\displaystyle \mathbf{x}}{\displaystyle \varepsilon})=(a_{ij}(\frac{\displaystyle \mathbf{x}}{\displaystyle \varepsilon})) $,
$ \mu(\frac{\displaystyle \mathbf{x}}{\displaystyle \varepsilon})=(\mu_{ij}(\frac{\displaystyle \mathbf{x}}{\displaystyle \varepsilon})) $.
 Here $ \delta_{ij} $ is the Kronecker symbol.
\begin{gather*}
V_{c}(\frac{\displaystyle \mathbf{x}}{\displaystyle \varepsilon})=\left\{
\begin{array}{l}
 0, \,  \hbox{in each cube}\\
 1, \,  \hbox{others}
\end{array}
\right.
\end{gather*}
\begin{gather*}
\mbox{\bf Case 5.1.1.} \quad a_{ij}(\frac{\displaystyle \mathbf{x}}{\displaystyle \varepsilon})
=\left\{
\begin{array}{l}
 0.1 \delta_{ij}, \,  \rm{in \, each \, cube}\\
 \delta_{ij}, \,  \rm{others}
\end{array}
\right.
\quad \mu_{ij}(\frac{\displaystyle \mathbf{x}}{\displaystyle \varepsilon})
=\left\{
\begin{array}{l}
 \delta_{ij}, \,  \rm{in \,each\, cube}\\
 0.01\delta_{ij}, \,  \rm{others}
\end{array}
\right.\nonumber \\
\mbox{\bf Case 5.1.2. }\quad a_{ij}(\frac{\displaystyle \mathbf{x}}{\displaystyle \varepsilon})
=\left\{
\begin{array}{l}
 0.05\delta_{ij}, \,  \rm{in\, each \,cube}\\
 \delta_{ij}, \,  \rm{others}
\end{array}
\right. \quad \mu_{ij}(\frac{\displaystyle \mathbf{x}}{\displaystyle \varepsilon})
=\left\{
\begin{array}{l}
 \delta_{ij}, \,  \rm{in \,each \,cube}\\
 0.005\delta_{ij}, \,  \rm{others}
\end{array}
\right.\nonumber \\
\mbox{\bf Case 5.1.3.}\quad a_{ij}(\frac{\displaystyle \mathbf{x}}{\displaystyle \varepsilon})
=\left\{
\begin{array}{l}
 0.02\delta_{ij}, \,  \rm{in \,each \,cube}\\
 \delta_{ij}, \,  \rm{others}
\end{array}
\right. \quad \mu_{ij}(\frac{\displaystyle \mathbf{x}}{\displaystyle \varepsilon})
=\left\{
\begin{array}{l}
 \delta_{ij}, \,  \rm{in \,each \,cube}\\
 0.0025\delta_{ij}, \,  \rm{others}.
\end{array}
\right.\nonumber \\
\end{gather*}
\end{exam}
\begin{figure}[t!]
\centering
{\tiny((a))}\includegraphics[width=4.8cm,height=4.8cm]{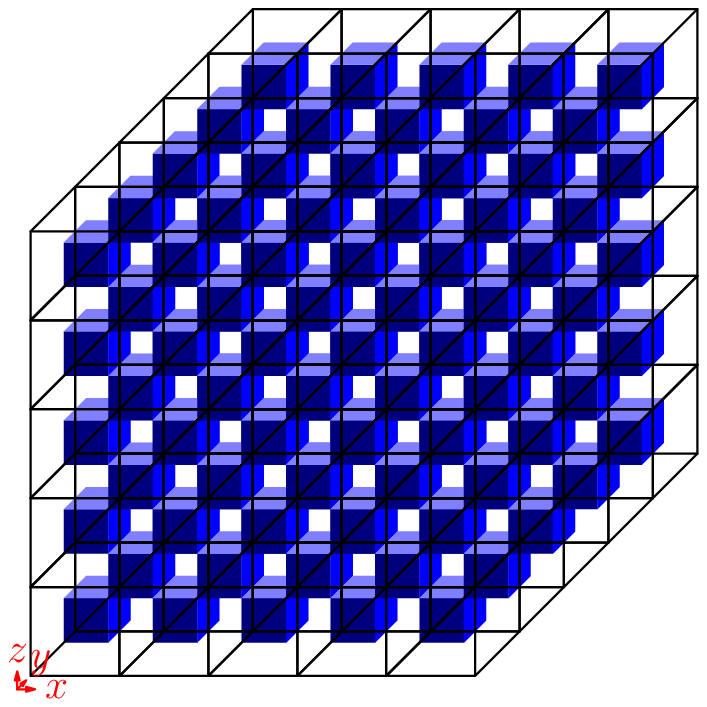}~{\tiny((b))}
\includegraphics[width=4.8cm,height=4.8cm]{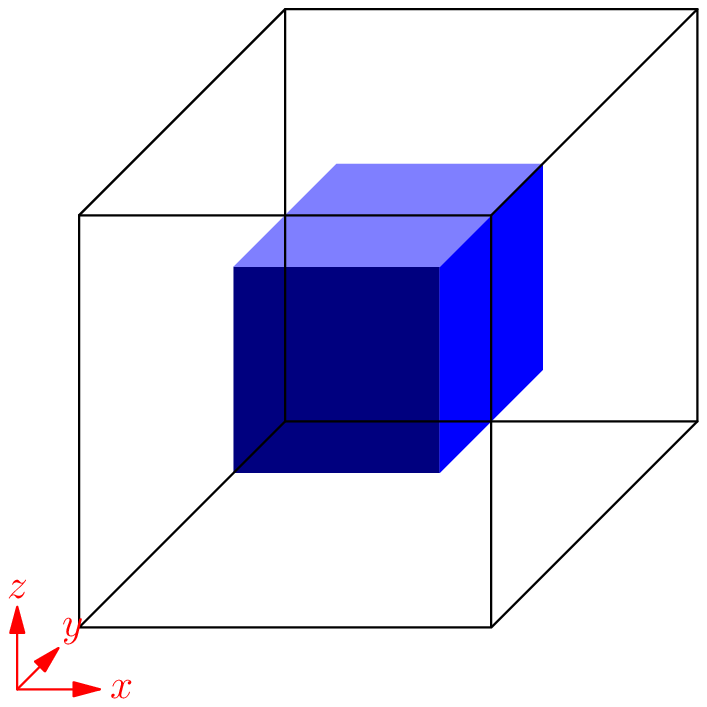}
\caption{{\rm(a)}~A whole domain $ \Omega $ of heterogenous materials
with a periodic microstructure;\/ {\rm(b)}~the reference cell $Q=(0,1)^3$.}\label{fig5-1}
\end{figure}

To determine the initial wave function $ \Psi_0(\mathbf{x}) $, we need to solve the time-independent Schr\"{o}dinger equation.
We take the wave function of the ground state as the initial wave function
$ \Psi_0(\mathbf{x}) $ of the Maxwell-Schr\"{o}dinger system (\ref{eq:5-1}).
For more details, see \cite{Zhang-L}.

In order to demonstrate the numerical accuracy of the present method,
the exact solution $ (\Psi^\varepsilon(\mathbf{x},t), \mathbf{E}^\varepsilon(\mathbf{x},t))$
of Maxwell-Schr\"{o}dinger system (\ref{eq:5-1})
must be available. Since the elements of the coefficient matrices $
(a_{ij}(\frac{\displaystyle \mathbf{x}}{\displaystyle \varepsilon})) $ and
$ (\mu_{ij}(\frac{\displaystyle \mathbf{x}}{\displaystyle \varepsilon})) $ are discontinuous,
in general, it is an extremely difficult task or even impossible to
obtain the exact solution. Here, we replace the exact solution $ (\Psi^\varepsilon(\mathbf{x},t), \mathbf{E}^\varepsilon(\mathbf{x},t))$
by the numerical solution in a very fine mesh and at a small time step. It should be emphasized that this step is not
necessary in the real applications. The computational costs for solving the Schr\"{o}dinger equation and the Maxwell's equations are listed in Table~\ref{tab5-1} and
Table ~\ref{tab5-2}, respectively. The time step $ \Delta t=0.0025 $.

\begin{table}[htb]
\caption{Comparison of computational costs for solving the Schr\"{o}dinger equation}\label{tab5-1}
\begin{center}
\begin{tabular}{|c|c|c|c|}\hline
 & original problem &  cell problems & homogenized equation \\
\hline  Dof & 2478213 & 180135 & 76410 \\
\hline  Number of elements & 13573655  & 1034688 & 403590 \\
 \hline
\end{tabular}
\end{center}
\end{table}
\begin{table}[htb]
\caption{Comparison of computational costs for solving the Maxwell's equations}\label{tab5-2}
\begin{center}
\begin{tabular}{|c|c|c|c|}\hline
 & original problem &  cell problems & homogenized equations \\
\hline  Dof & 16125013 & 1229702 & 486888 \\
\hline  Number of elements & 13573655 & 1034688 & 403590 \\
 \hline
\end{tabular}
\end{center}
\end{table}

For simplicity, without confusion $\rho^{\varepsilon}= N\vert\Psi^{\varepsilon}\vert^2$ and
$\mathbf{E}^{\varepsilon}$ denote the numerical solutions of the density function
and the electric field  for the Maxwell-Schr\"{o}dinger system (\ref{eq:5-1}) in a fine mesh and at a time step
$ \Delta t=0.0025 $, respectively,
which are regarded as the reference solutions of the problem (\ref{eq:5-1}).
Here $\rho^{0}=N\vert\Psi^{0}\vert^2$ and
$\mathbf{E}^{0}$ are respectively the numerical solutions of the density function
and the electric field for the associated homogenized Maxwell-Schr\"{o}dinger system in a coarse mesh
and at a time step $ \Delta t =0.0025 $.
$\rho_1^{\varepsilon}=N\vert\Psi_1^{\varepsilon}\vert^2$ and
$\rho_2^{\varepsilon}=N\vert\Psi_2^{\varepsilon}\vert^2$ are the first-order and the second-order multiscale
solutions of the density function, respectively.  $\mathbf{E}^{\varepsilon,(1)}$  and
$\mathbf{E}^{\varepsilon,(2)} $ are the first-order and the
second-order multiscale solutions of  the electric field, respectively. Set $e_0=\rho^\varepsilon-\rho^0$, $
e_1=\rho^{\varepsilon}-\rho_1^{\varepsilon}$, $
e_2=\rho^{\varepsilon}-\rho_2^{\varepsilon}$,
$\mathbf{e}_0=\mathbf{E}^{\varepsilon}-\mathbf{E}^{0}$,
$\mathbf{e}_1=\mathbf{E}^{\varepsilon}-\mathbf{E}^{\varepsilon, (1)}$,
$\mathbf{e}_2=\mathbf{E}^{\varepsilon}-\mathbf{E}^{\varepsilon, (2)}$. For convenience, we introduce
the following notation. $\|\rho\|_0 =\|\rho\|_{L^{2}(0,T; \/L^2(\Omega))} $,
$ \|\rho\|_1 = \|\rho\|_{L^{2}(0,T; \/H^1(\Omega))} $,
$\|\mathbf{E}\|_{(0)}=\|\mathbf{E}\|_{L^{2}(0,T; \/(L^2(\Omega))^3)} $,
$\|\mathbf{E}\|_{(1)}=\|\mathbf{E}\|_{L^{2}(0,T; \/\mathbf{H}(\mathbf{curl};\Omega))} $.

The computational results for the density function and the electric field in Example~\ref{exam5-1} are illustrated
in Table~\ref{tab5-3} and Table~\ref{tab5-4}, respectively.

\begin{table}[htb]
\caption{The computational results for the density function in Example~\ref{exam5-1}}\label{tab5-3}
\begin{center}
\begin{tabular}{|c|c|c|c|c|c|c|}
   \hline
    &$\frac{\|e_0\|_{0}}{\|\rho^{\varepsilon}\|_{0}}$
    & $\frac{\|e_1\|_{0}}{\|\rho^{\varepsilon}\|_{0}}$
    & $\frac{\|e_2\|_{0}}{\|\rho^{\varepsilon}\|_{0}}$
    & $\frac{\|e_0\|_{1}}{\|\rho^{\varepsilon}\|_{1}}$
    & $\frac{\|e_1\|_{1}}{\|\rho^{\varepsilon}\|_{1}}$
    & $\frac{\|e_2\|_{1}}{\|\rho^{\varepsilon}\|_{1}}$ \\
   \hline
    Case 5.1.1 & 0.021350 & 0.012342 & 0.005139 & 0.224995 & 0.125434 & 0.036561  \\
    \hline
    Case 5.1.2 & 0.031112 & 0.024987 &  0.005740 & 0.314168 & 0.248844 & 0.041702  \\
    \hline
    Case 5.1.3 & 0.073949 & 0.071766 & 0.012427& 0.536015 & 0.516680 & 0.081628  \\
    \hline
  \end{tabular}
  \end{center}
\end{table}
\begin{table}[htb]
\caption{The computational results for the electric field in Example~\ref{exam5-1}}\label{tab5-4}
\begin{center}
\begin{tabular}{|c|c|c|c|c|c|c|}
   \hline
    &$\frac{\|\mathbf{e}_0\|_{(0)}}{\|\mathbf{E}^{\varepsilon}\|_{(0)}}$
    &$\frac{\|\mathbf{e}_1\|_{(0)}}{\|\mathbf{E}^{\varepsilon}\|_{(0)}}$
    &$\frac{\|\mathbf{e}_2\|_{(0)}}{\|\mathbf{E}^{\varepsilon}\|_{(0)}}$
    &$\frac{\|\mathbf{e}_0\|_{(1)}}{\|\mathbf{E}^{\varepsilon}\|_{(1)}}$
    &$\frac{\|\mathbf{e}_1\|_{(1)}}{\|\mathbf{E}^{\varepsilon}\|_{(1)}}$
    &$\frac{\|\mathbf{e}_2\|_{(1)}}{\|\mathbf{E}^{\varepsilon}\|_{(1)}}$\\
    \hline
    Case 5.1.1 & 0.124387 & 0.079315 & 0.039245  & 1.147215  &1.036499  & 0.793641  \\
    \hline
    Case 5.1.2 & 0.166034  & 0.159700  &0.067750  & 1.595115  & 1.239791  & 0.850183 \\
    \hline
    Case 5.1.3 & 0.325069 & 0.321475 & 0.129570  & 2.719812& 2.213347 & 0.889329 \\
    \hline
  \end{tabular}
  \end{center}
\end{table}

The evolution of the relative errors of the density function
in $L^2(\Omega) $-norm and in
$H^1(\Omega) $-norm with respect to time t in Case 5.1.1 is displayed in Fig.~\ref{fig5-2}.
The evolution of the relative errors of the electric field
in $(L^2(\Omega))^3$-norm and in $\mathbf{H}(\mathbf{curl};\Omega) $-norm with
respect to time t in Case 5.1.2 is illustrated in Fig.~\ref{fig5-3}.

\begin{figure}
\begin{center}
\includegraphics[width=6cm,height=6cm] {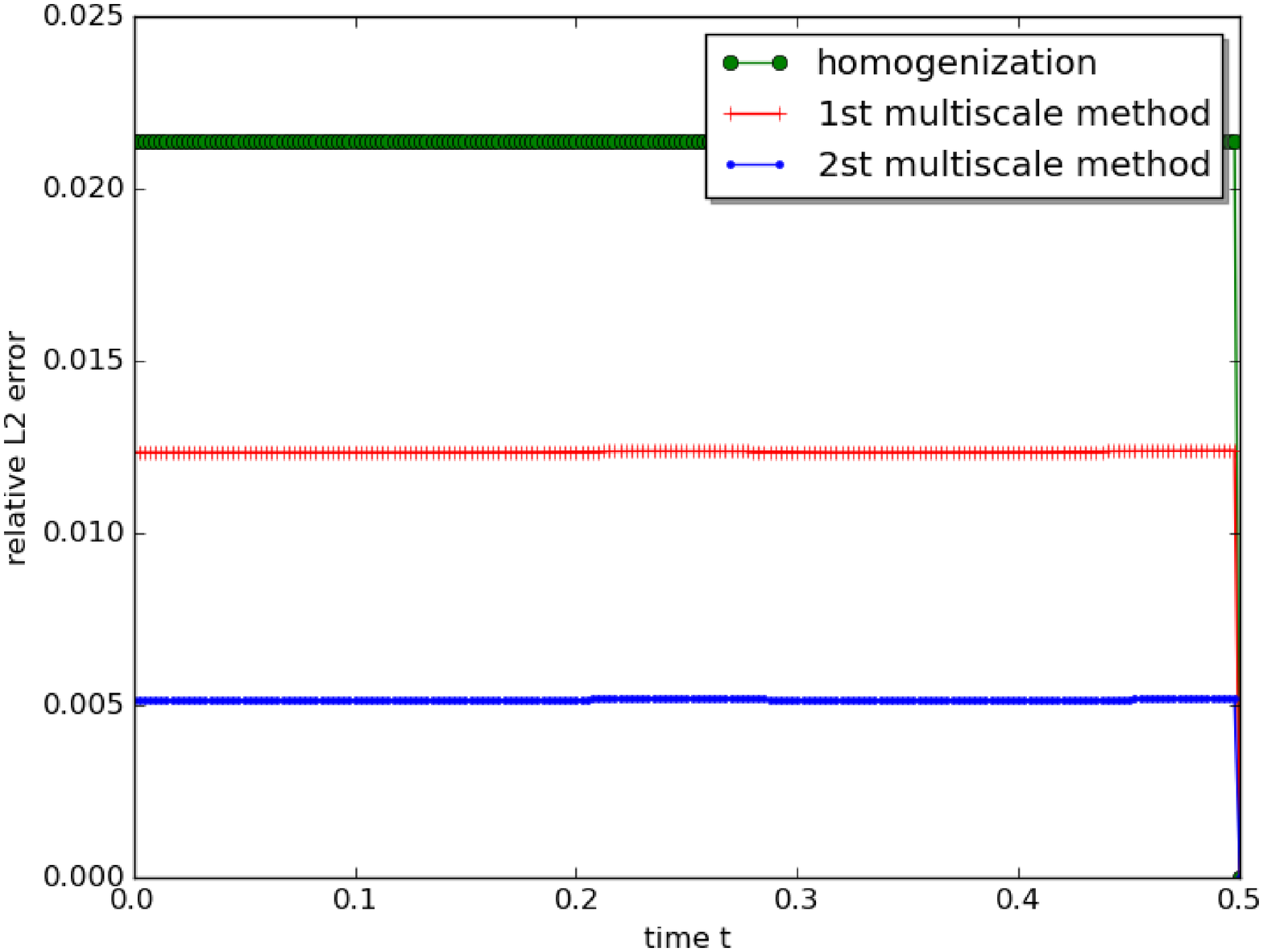}
\tiny{}\includegraphics[width=6cm,height=6cm] {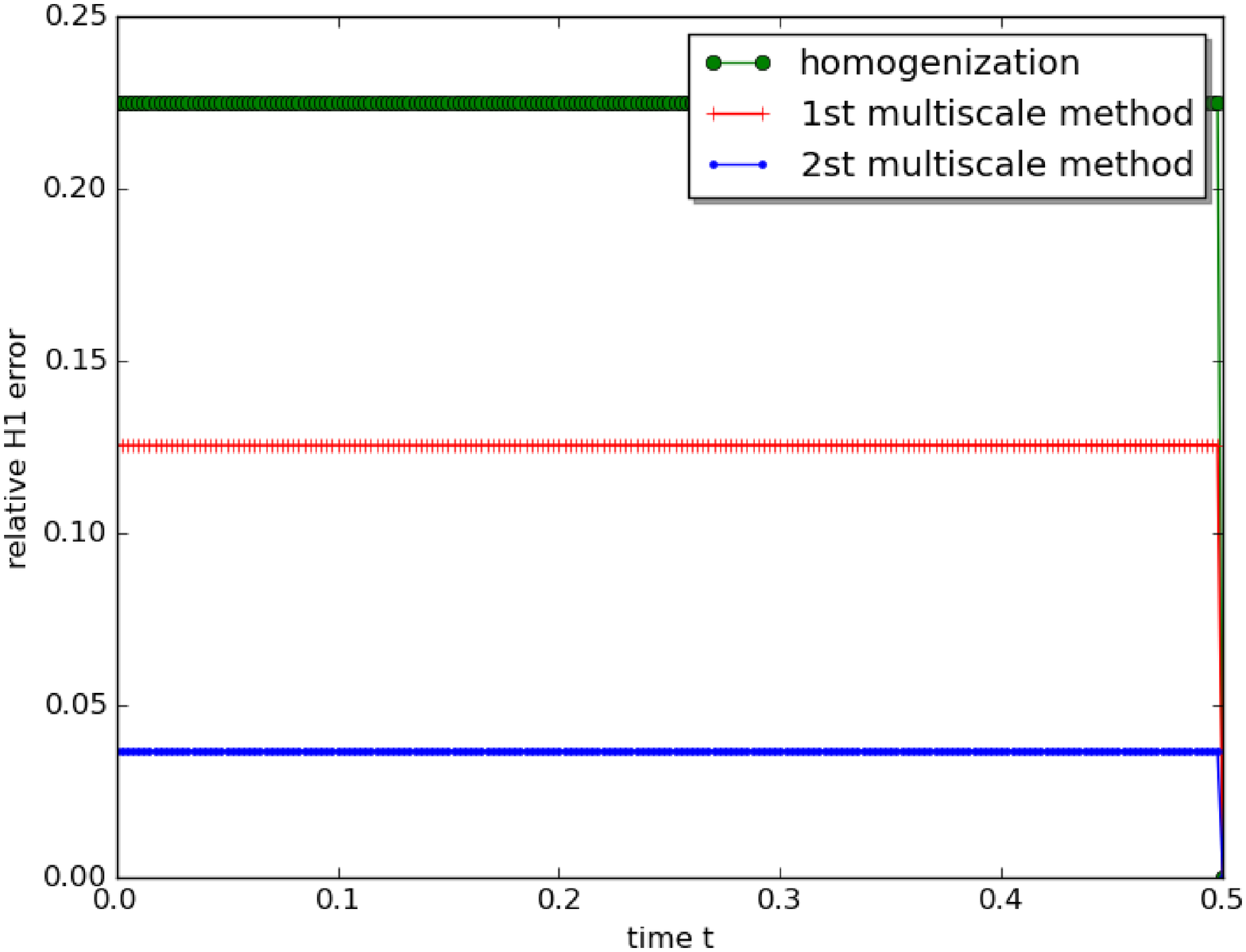}
\caption{Evolution of relative errors of the density function in the$\;
L^2(\Omega) $-norm (left) and in the $\;H^1(\Omega) $-norm
(right) in Case 5.1.1.}\label{fig5-2}
\end{center}
\end{figure}

\begin{figure}
\begin{center}
\includegraphics[width=6cm,height=6cm] {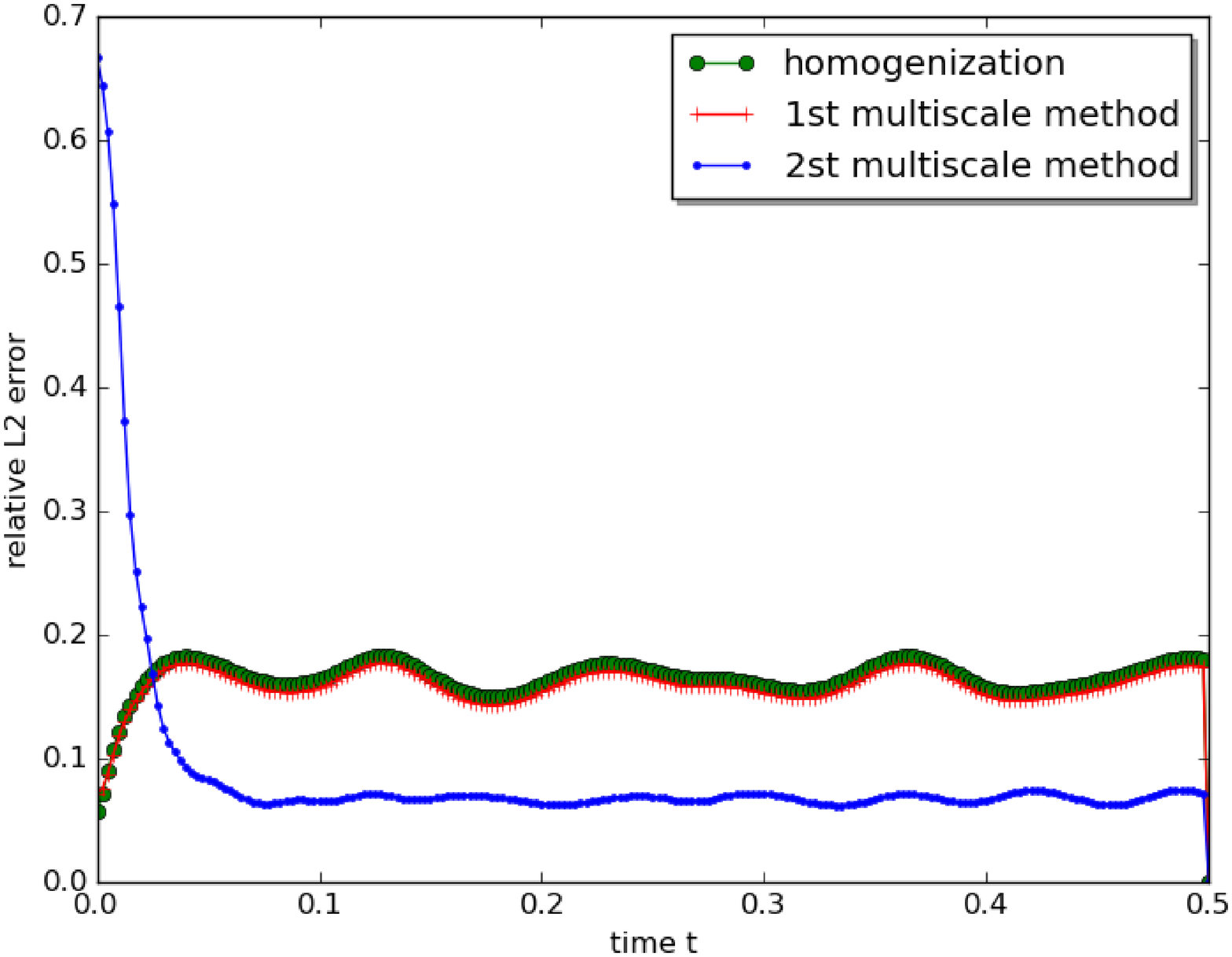}
\tiny{}\includegraphics[width=6cm,height=6cm] {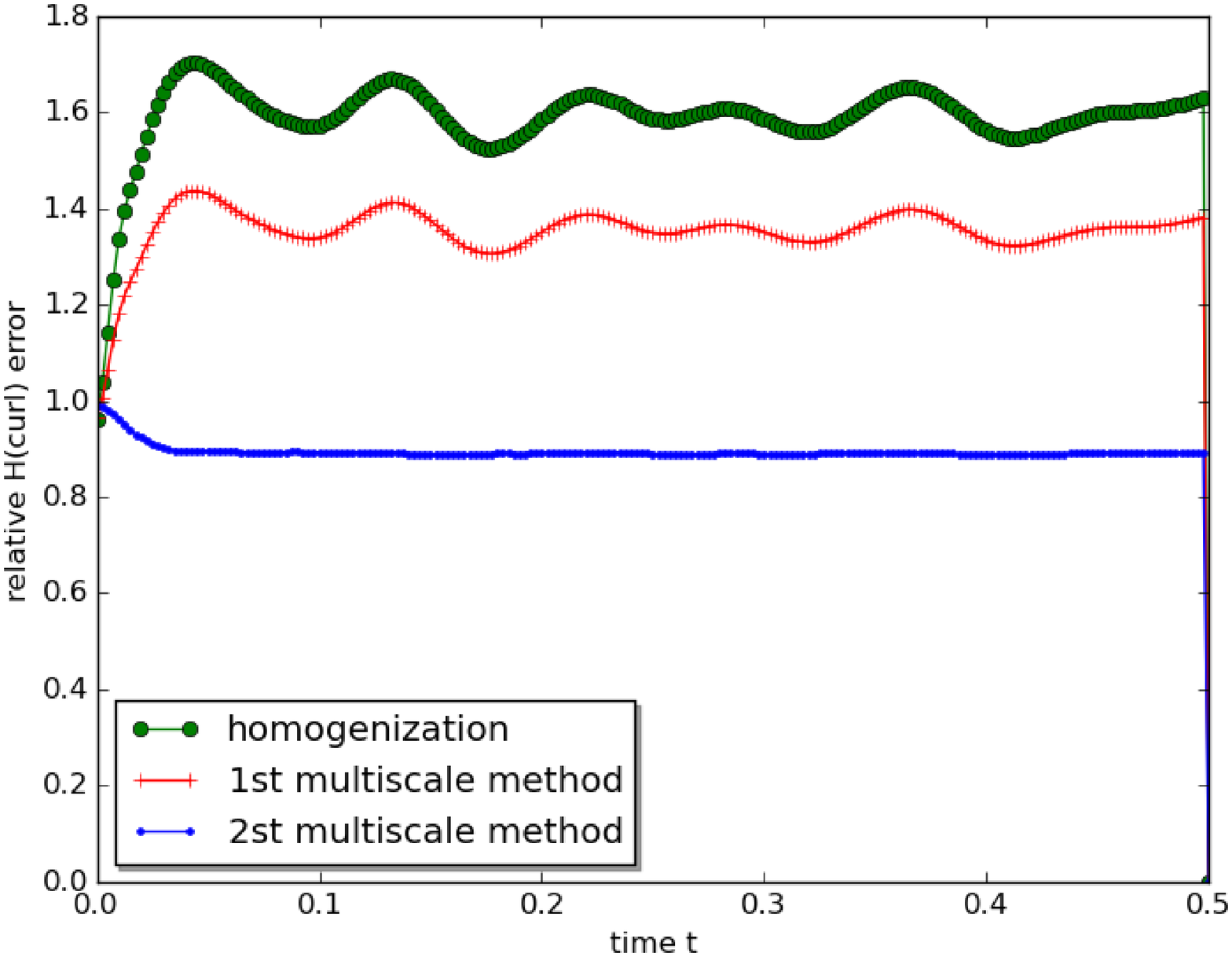}
\caption{Evolution of relative errors of the electric field in the$\;
(L^2(\Omega))^3 $-norm (left) and in the $\;\mathbf{H}(\mathbf{curl};\Omega) $-norm
(right) in Case 5.1.2.}\label{fig5-3}
\end{center}
\end{figure}

The computational results based on the homogenization method, the first-order and the second-order
multiscale methods for the density function and the electric field on the intersection $ x_3=0.45 $ and at
$ T=0.3 $ in Case 5.1.1, 5.1.2 and 5.1.3 are shown in Figs.~\ref{fig5-4}, \ref{fig5-5} and \ref{fig5-6},
respectively.

\begin{figure}
\begin{center}
\tiny{(a)}\includegraphics[width=5.5cm,height=4cm] {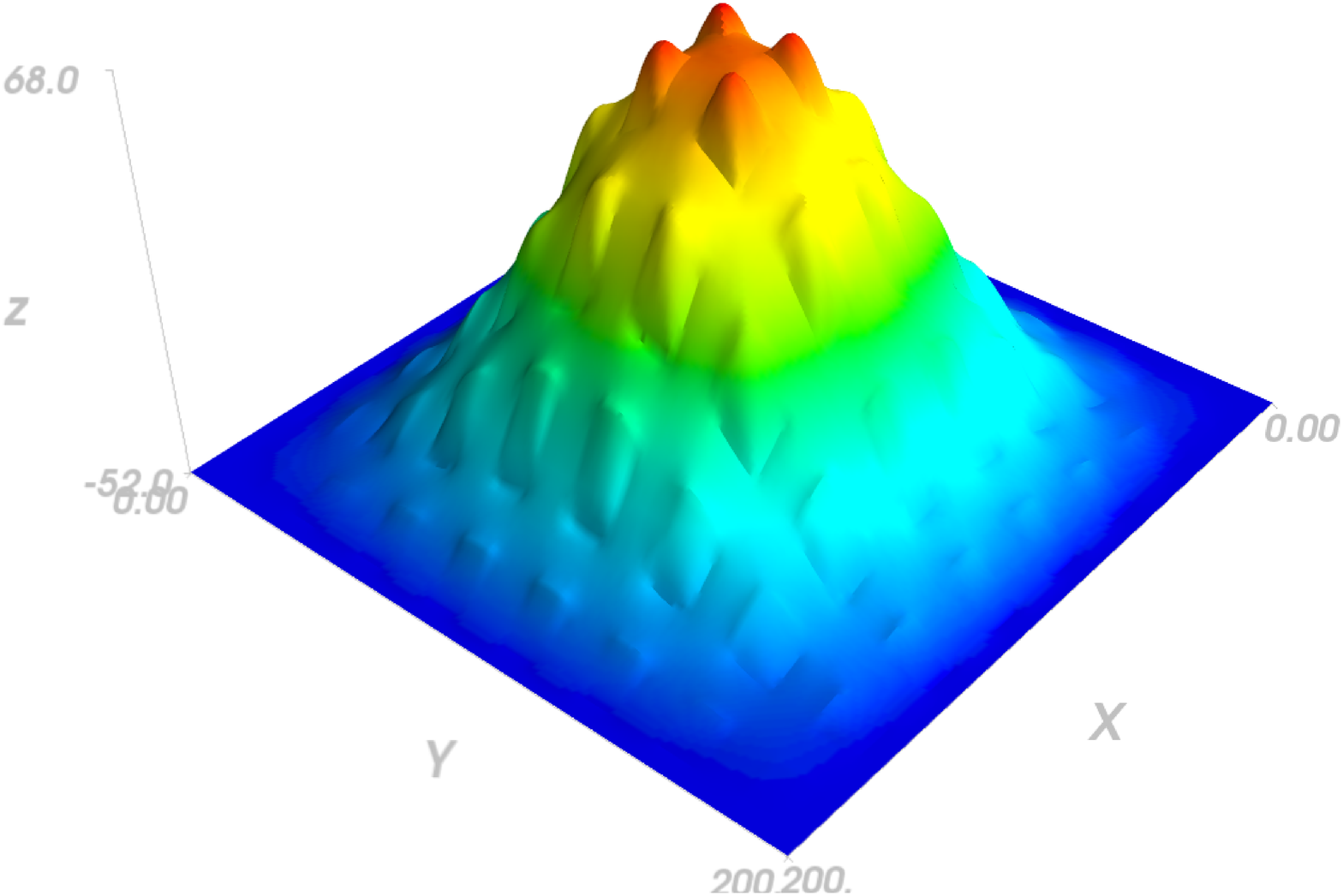}
\tiny{(b)}\includegraphics[width=5.5cm,height=4cm] {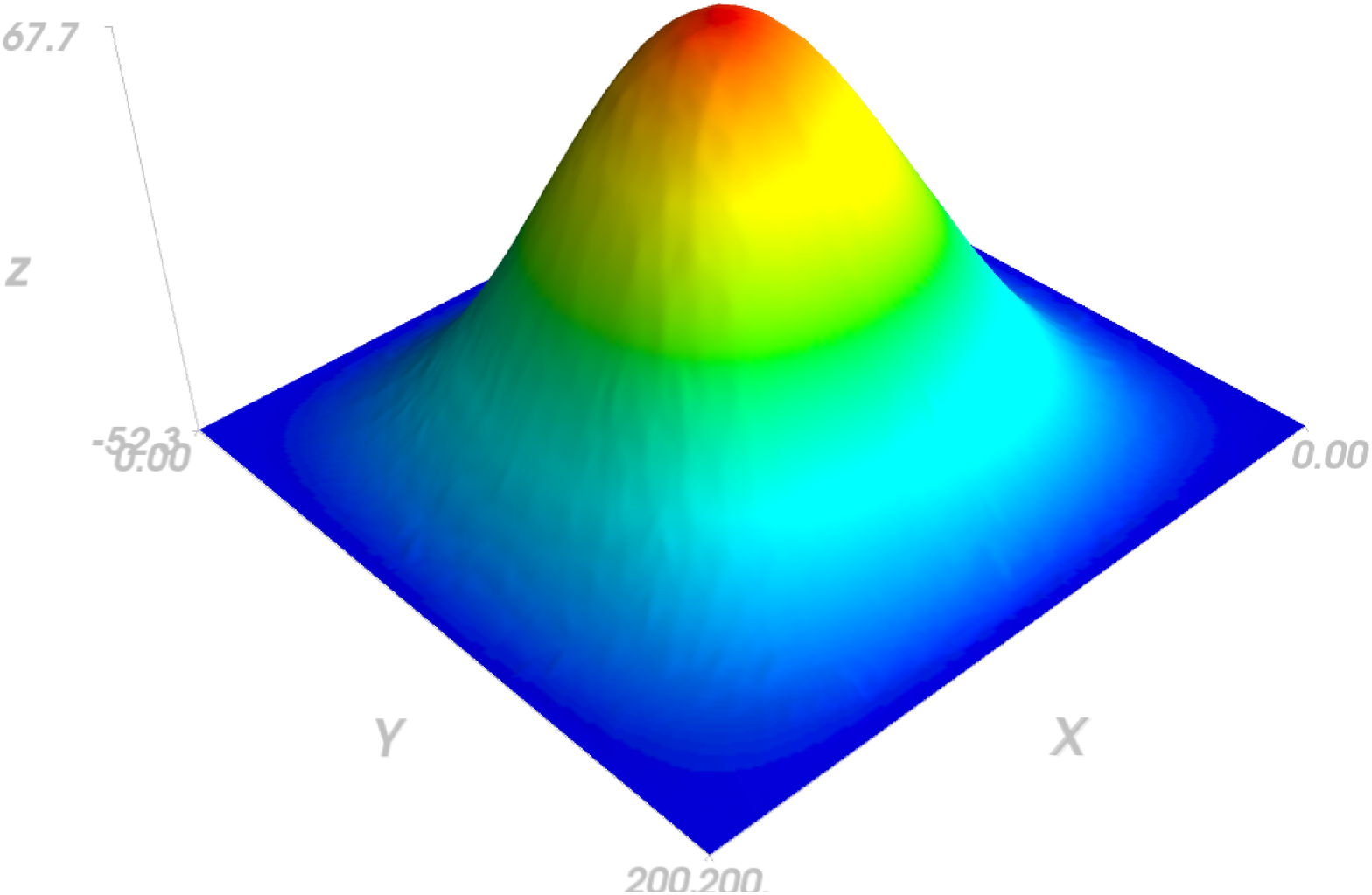}
\tiny{(c)}\includegraphics[width=5.5cm,height=4cm] {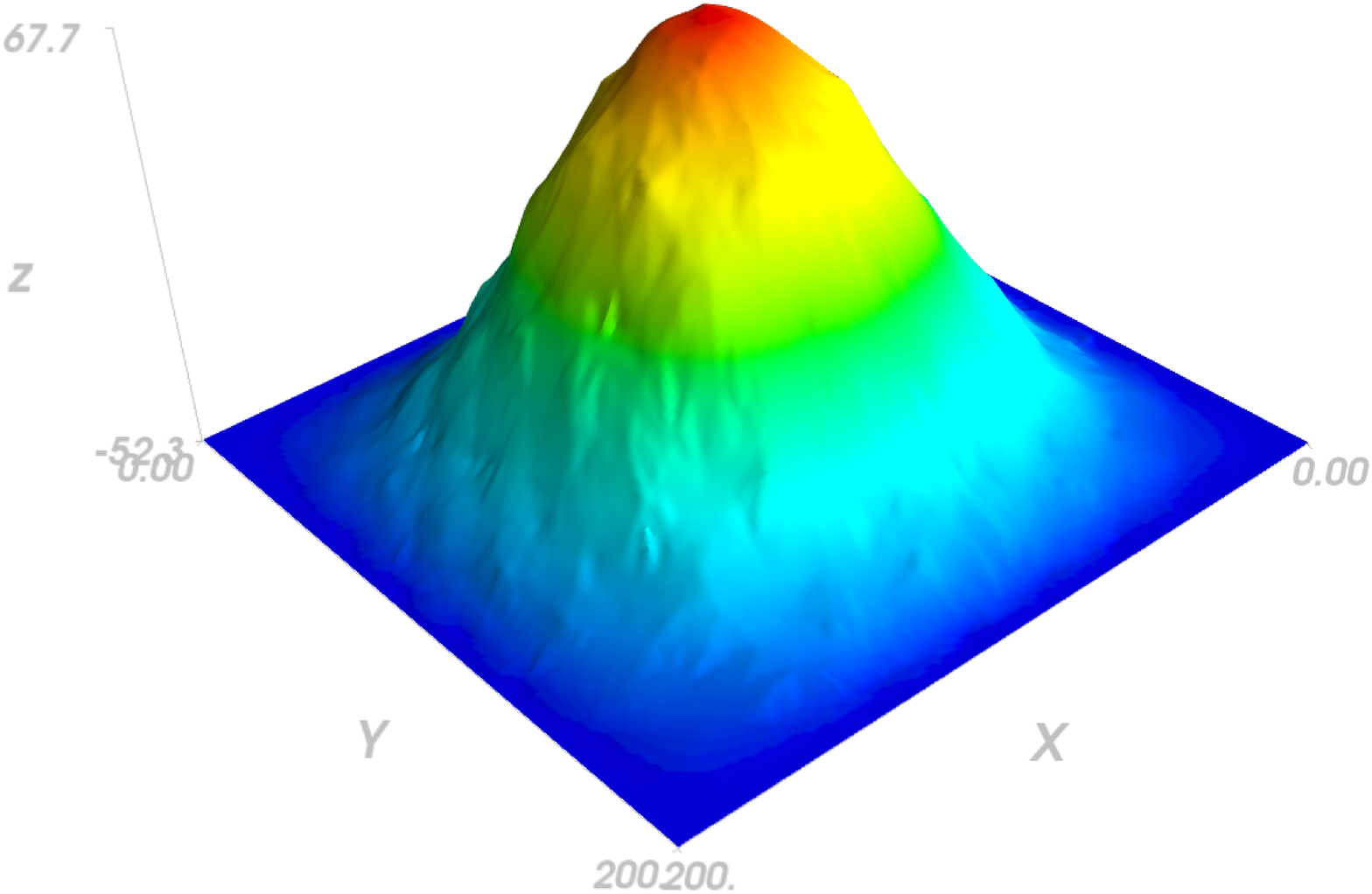}
\tiny{(d)}\includegraphics[width=5.5cm,height=4cm] {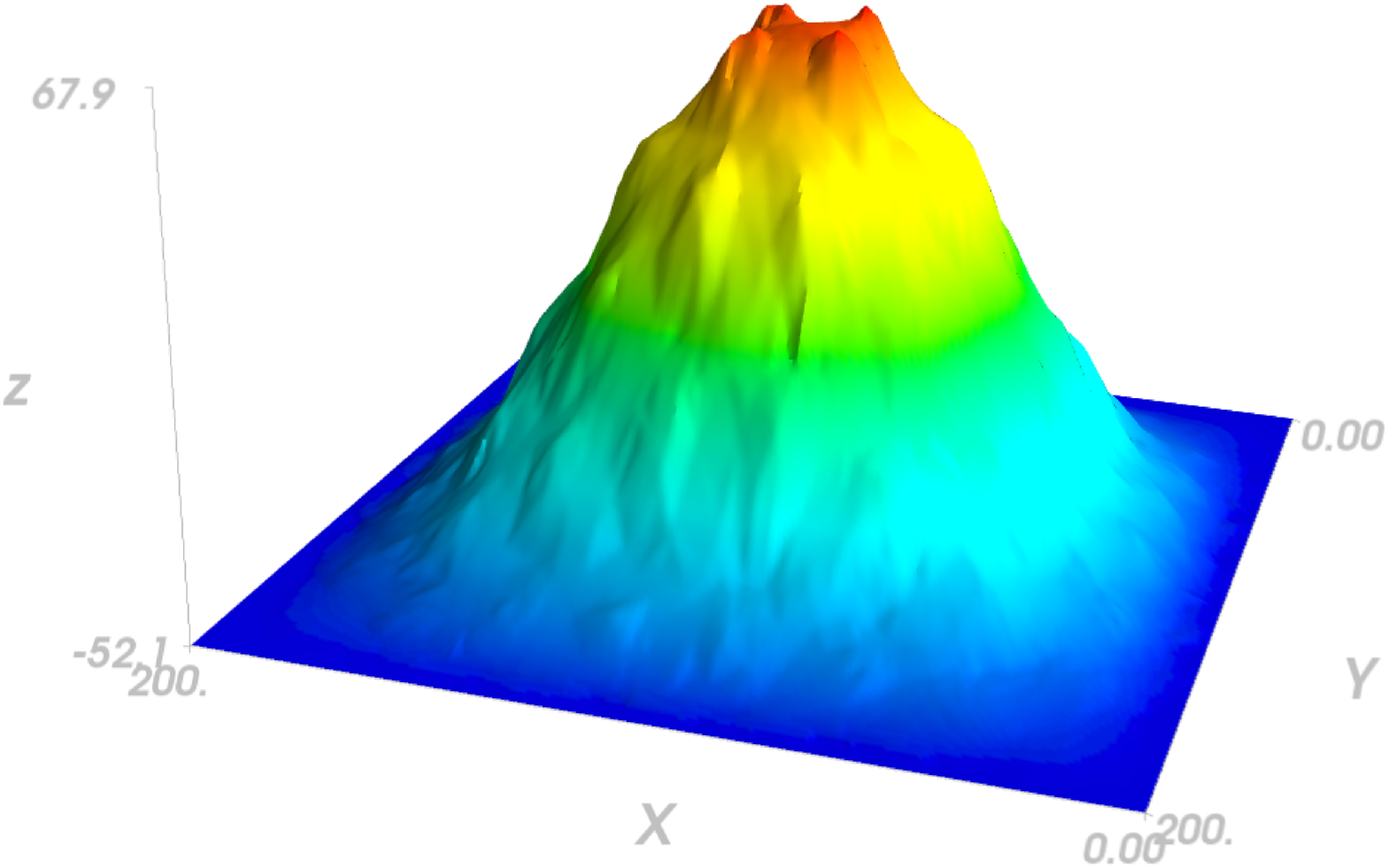}
\tiny{(e)}\includegraphics[width=5.5cm,height=4cm] {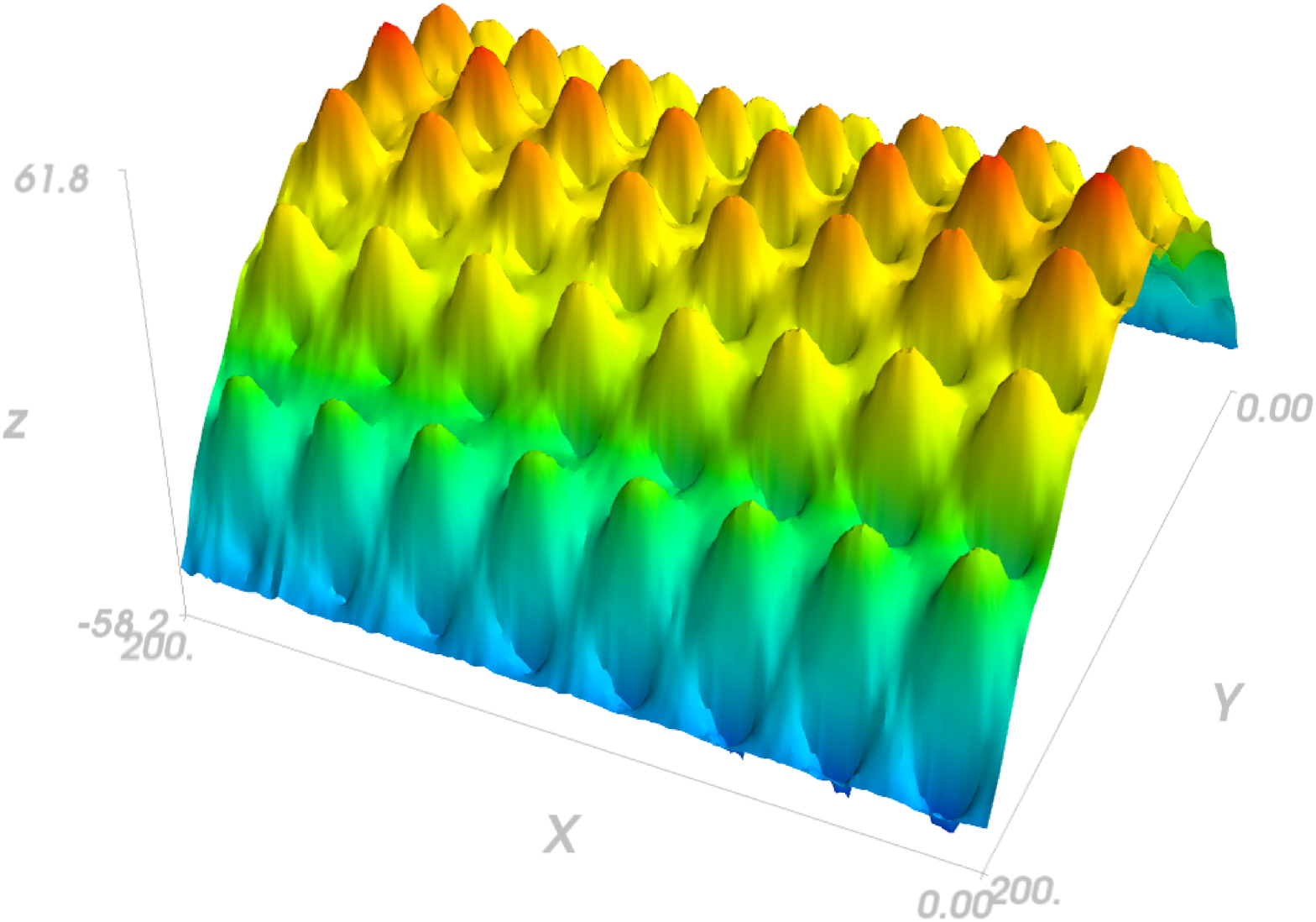}
\tiny{(f)}\includegraphics[width=5.5cm,height=4cm] {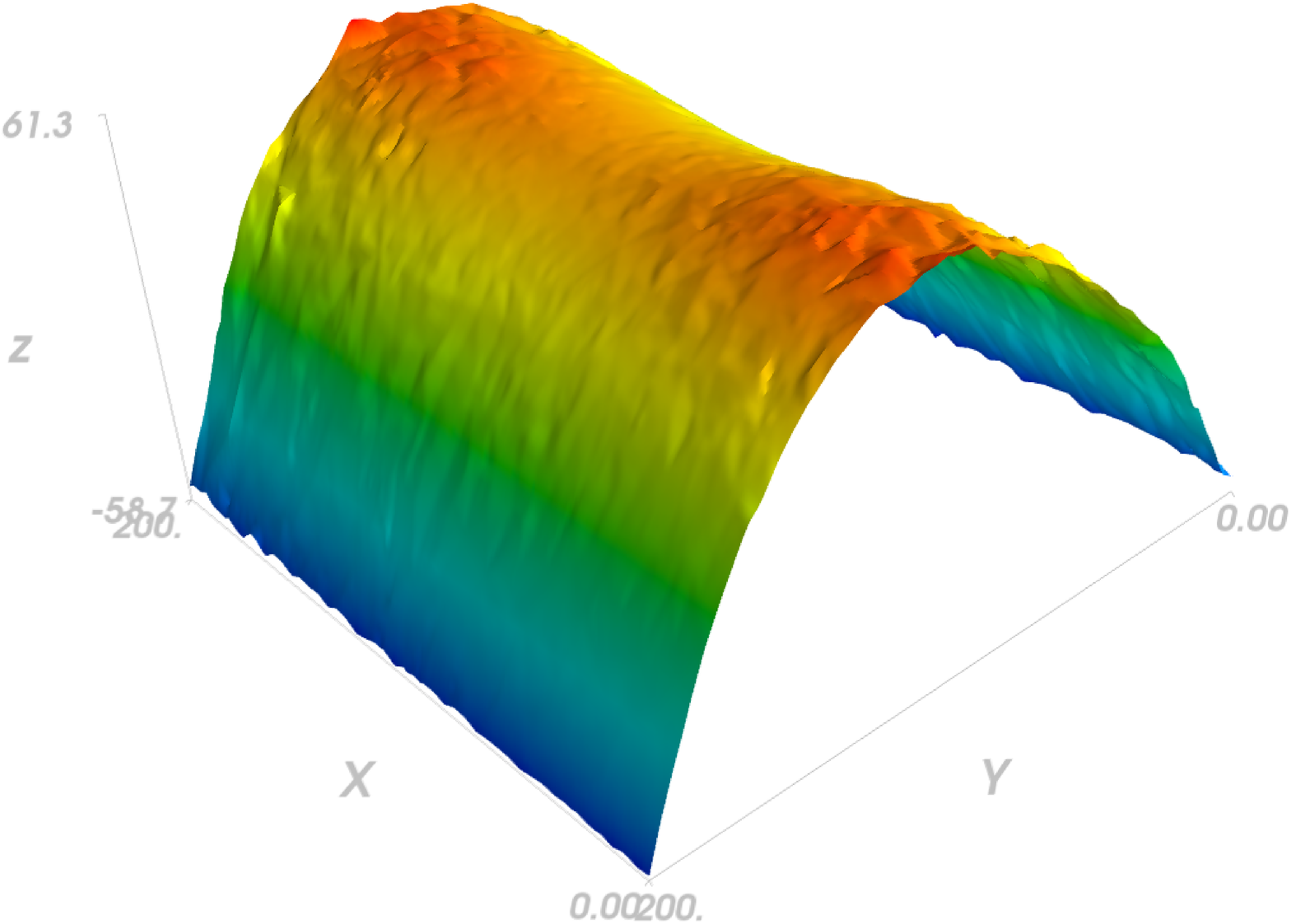}
\tiny{(g)}\includegraphics[width=5.5cm,height=4cm] {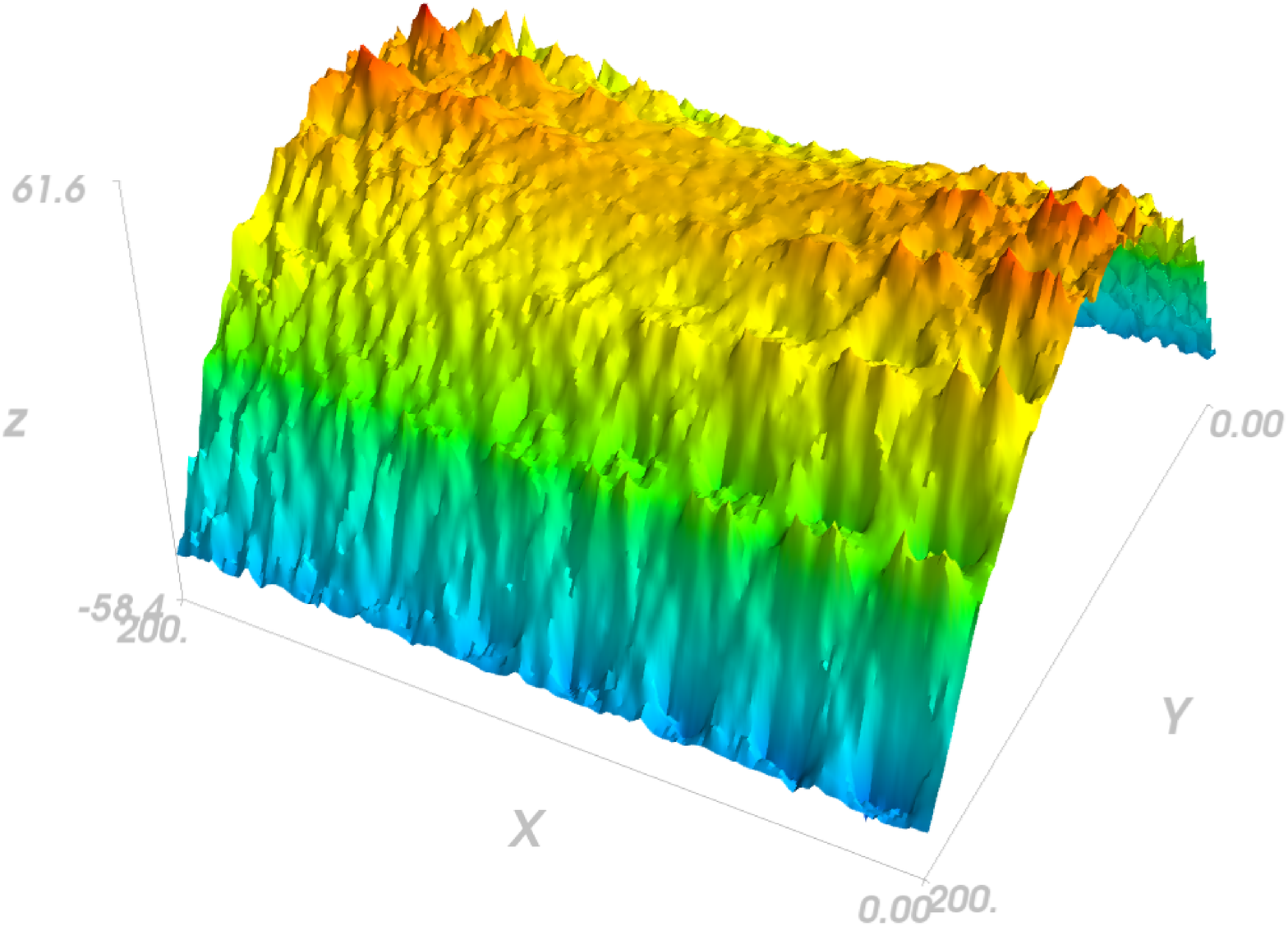}
\tiny{(h)}\includegraphics[width=5.5cm,height=4cm] {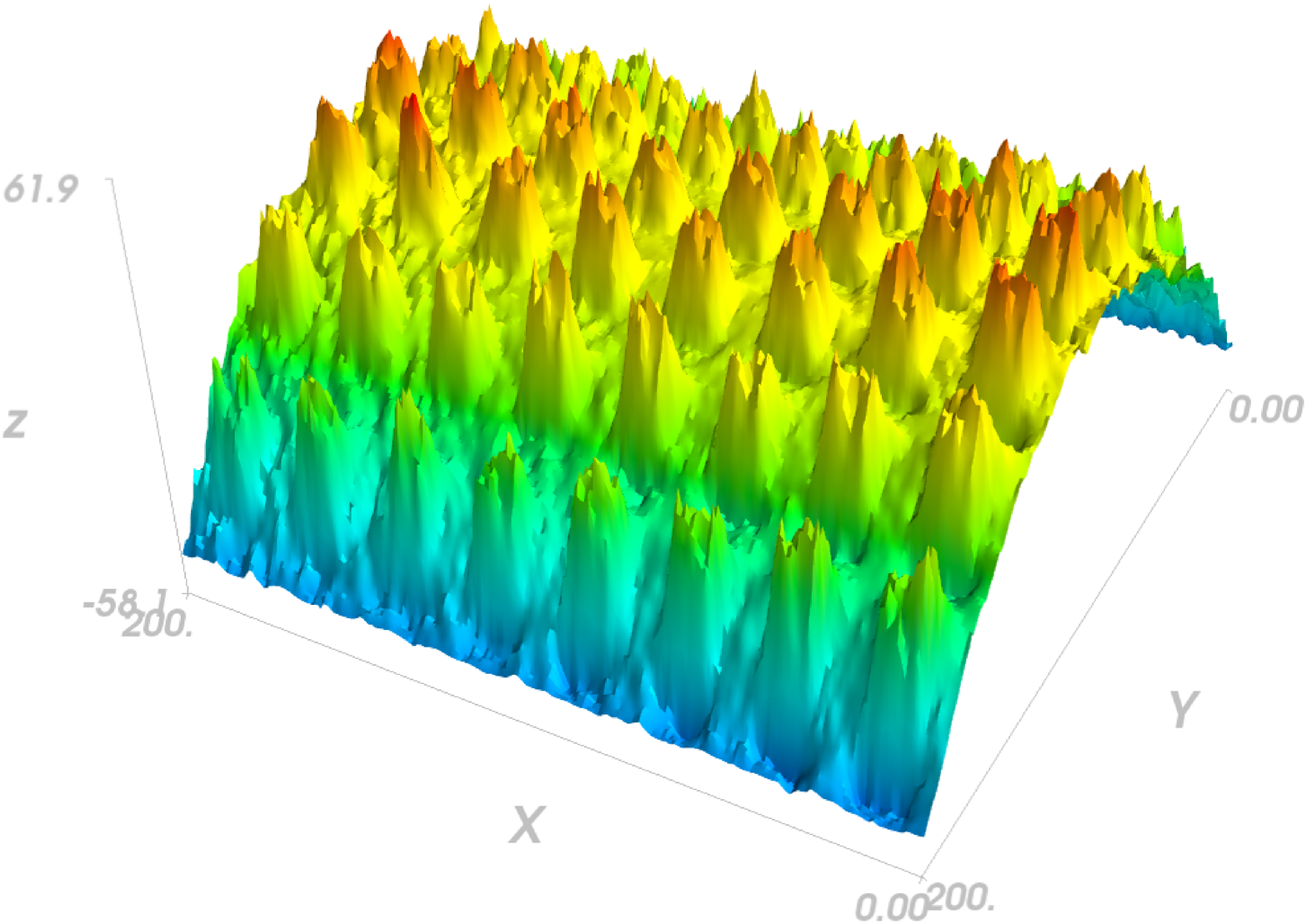}

\caption{Case 5.1.1: the density function and the first component of the electric field on the intersection $ x_3=0.45 $ and at $ T = 0.3 $.
(a) the reference solution $ \rho^\varepsilon(\mathbf{x},t)$ in a fine mesh;\, (b) the homogenized solution $
\rho^0(\mathbf{x},t)$ in a coarse mesh;\, (c) the first-order multiscale solution
$\rho_1^{\varepsilon}(\mathbf{x},t)$; \, (d)the second-order multiscale solution
$\rho_2^{\varepsilon}(\mathbf{x},t)$; \,(e) the reference solution $ \mathbf{E}^\varepsilon(\mathbf{x},t)$ in a fine mesh;\,
(f) the homogenized solution $\mathbf{E}^0(\mathbf{x},t) $ in a coarse mesh; \, (g) the first-order multiscale solution
$\mathbf{E}^{\varepsilon, (1)}(\mathbf{x},t) $;\, (h) the second-order multiscale solution
$\mathbf{E}^{\varepsilon, (2)}(\mathbf{x},t)$. The time step $ \Delta t=0.0025 $. }\label{fig5-4}
\end{center}
\end{figure}

\begin{figure}
\begin{center}
\tiny{(a)}\includegraphics[width=5.5cm,height=4cm] {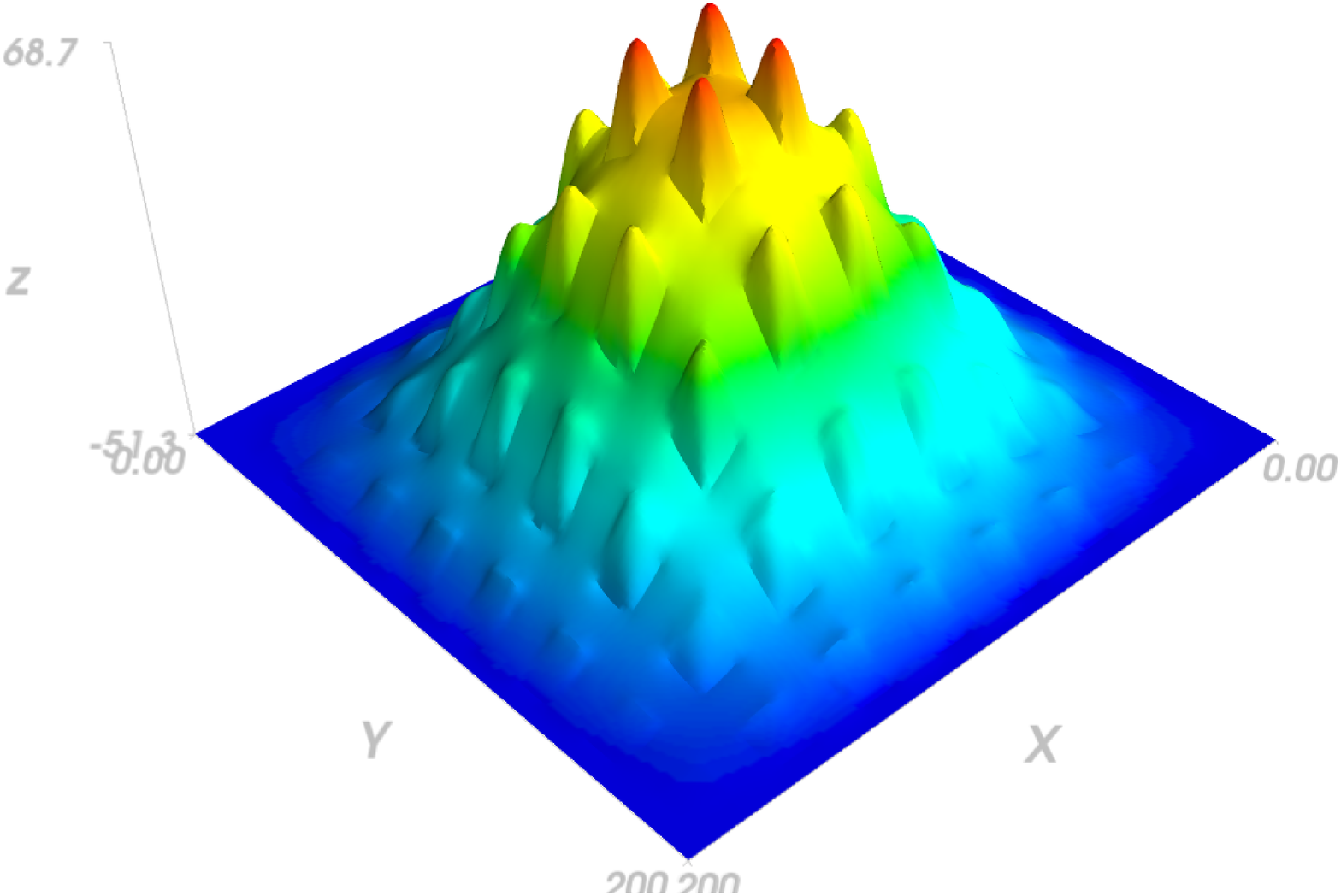}
\tiny{(b)}\includegraphics[width=5.5cm,height=4cm] {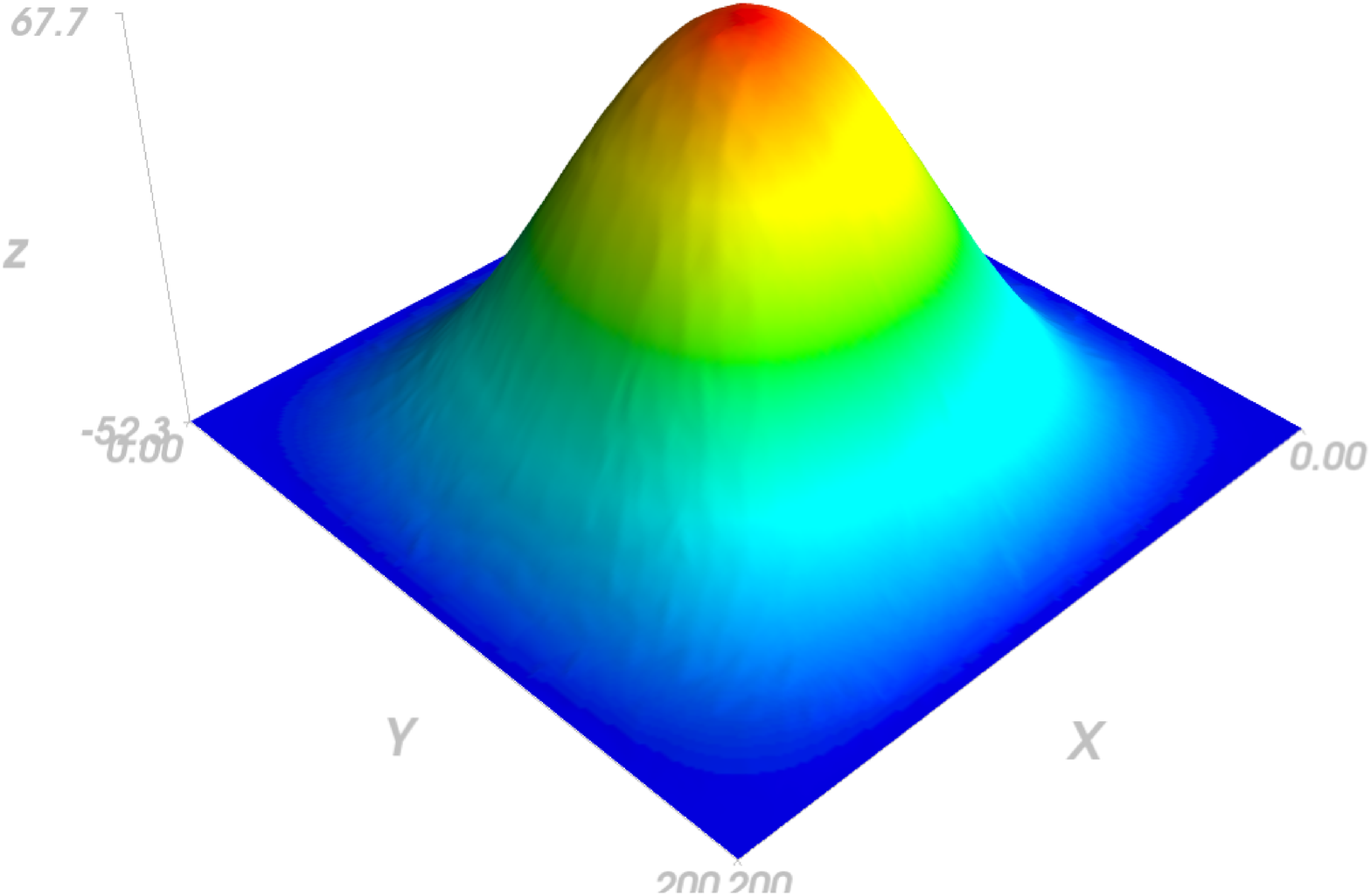}
\tiny{(c)}\includegraphics[width=5.5cm,height=4cm] {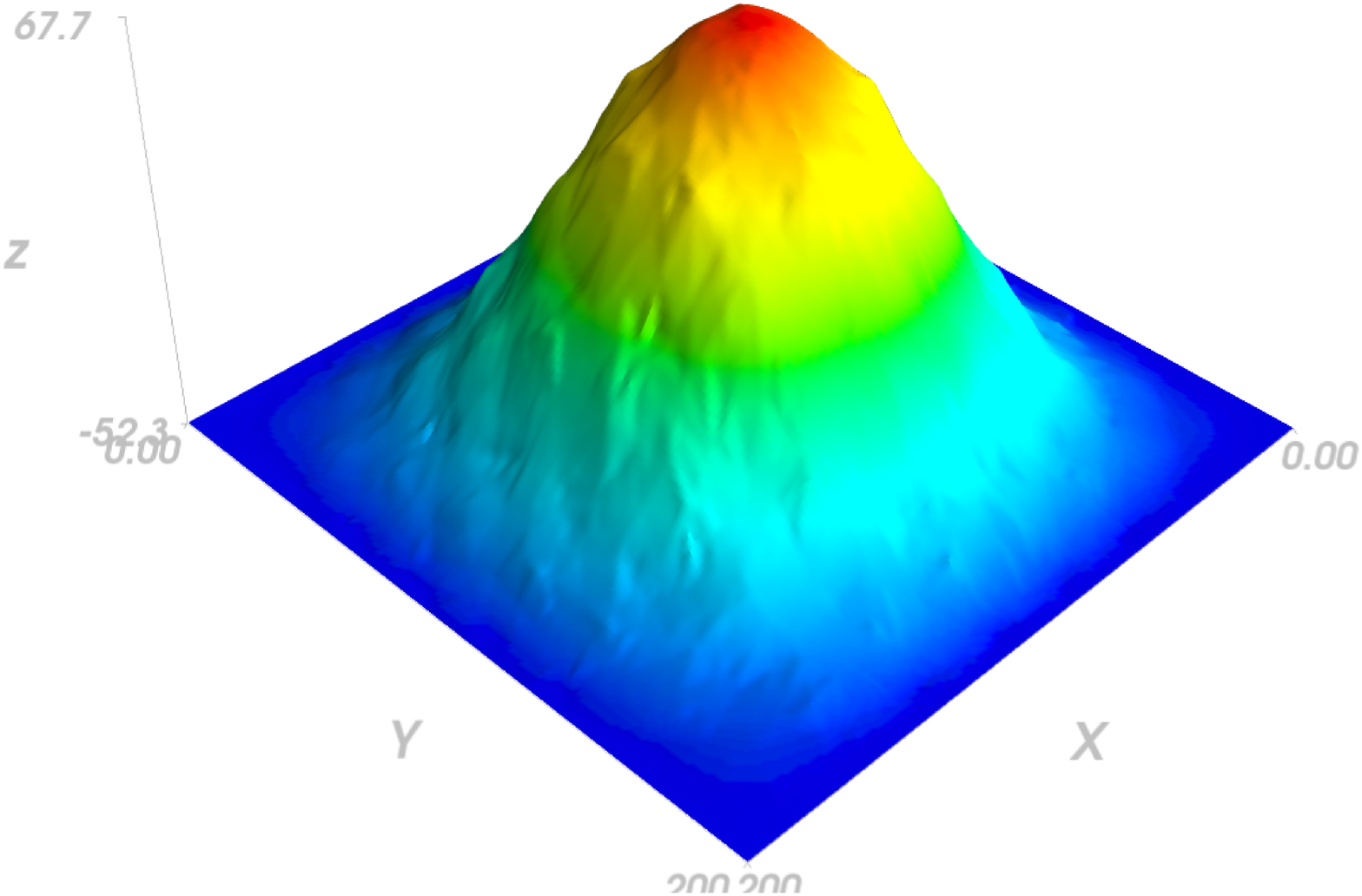}
\tiny{(d)}\includegraphics[width=5.5cm,height=4cm] {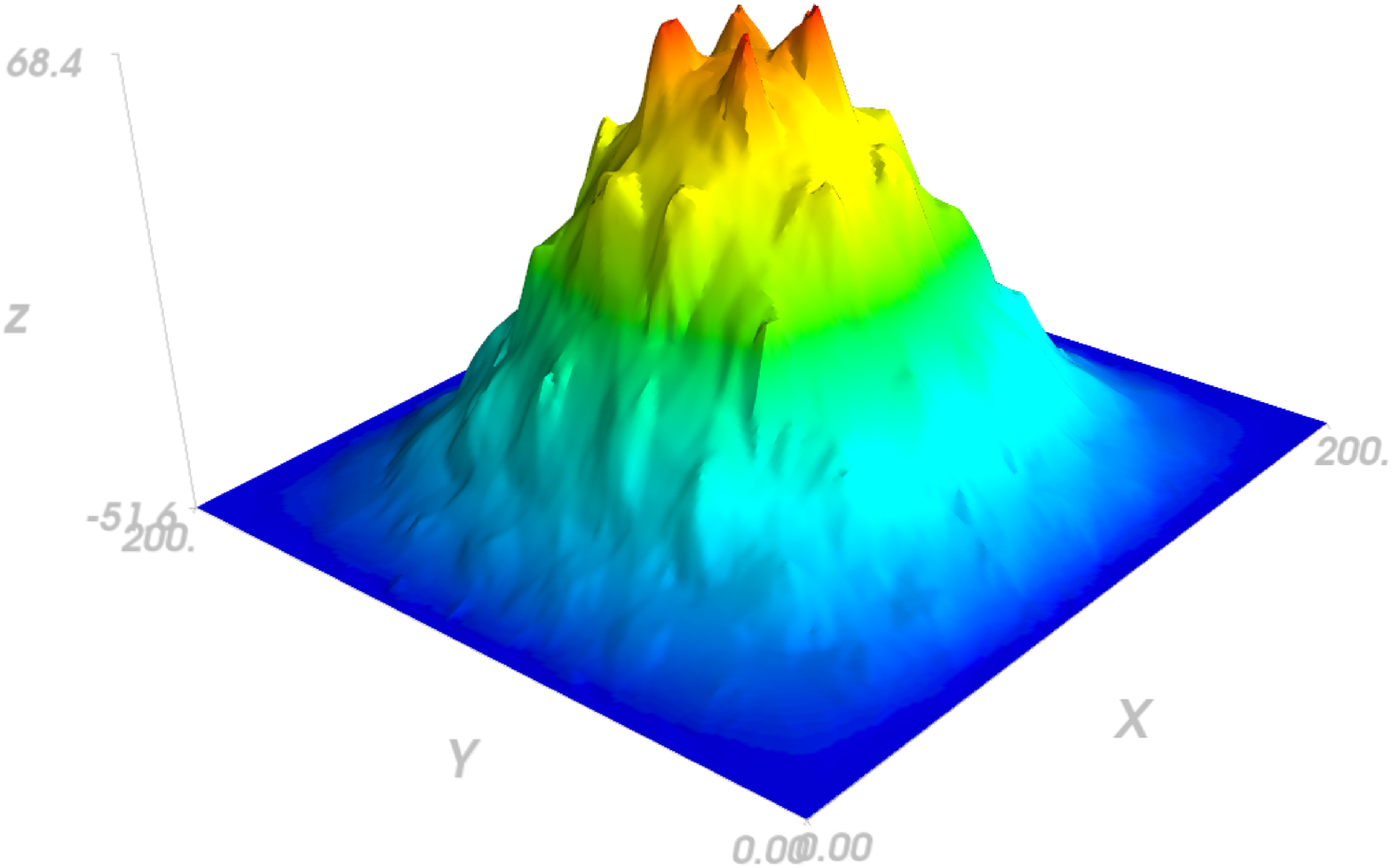}
\tiny{(e)}\includegraphics[width=5.5cm,height=4cm] {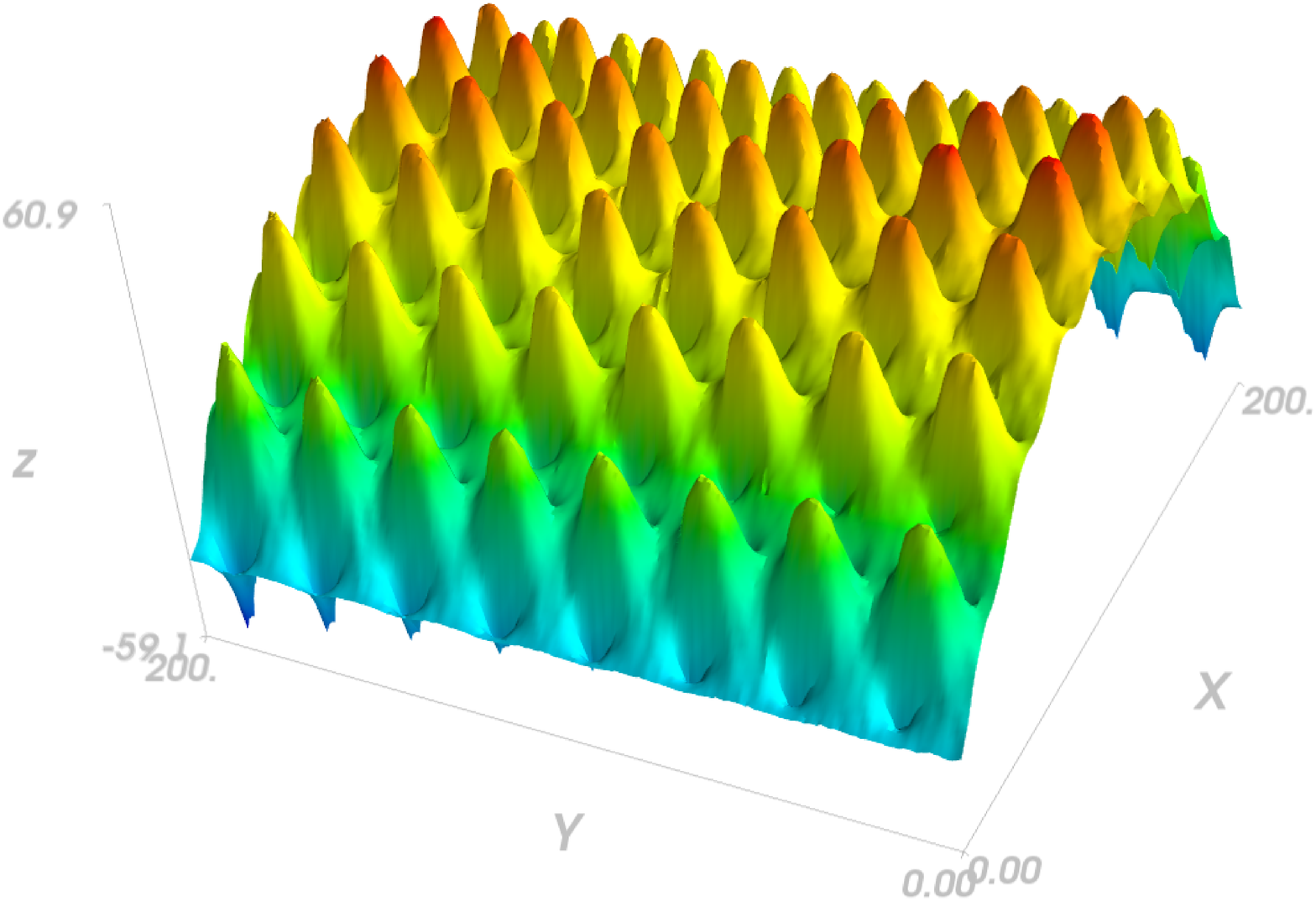}
\tiny{(f)}\includegraphics[width=5.5cm,height=4cm] {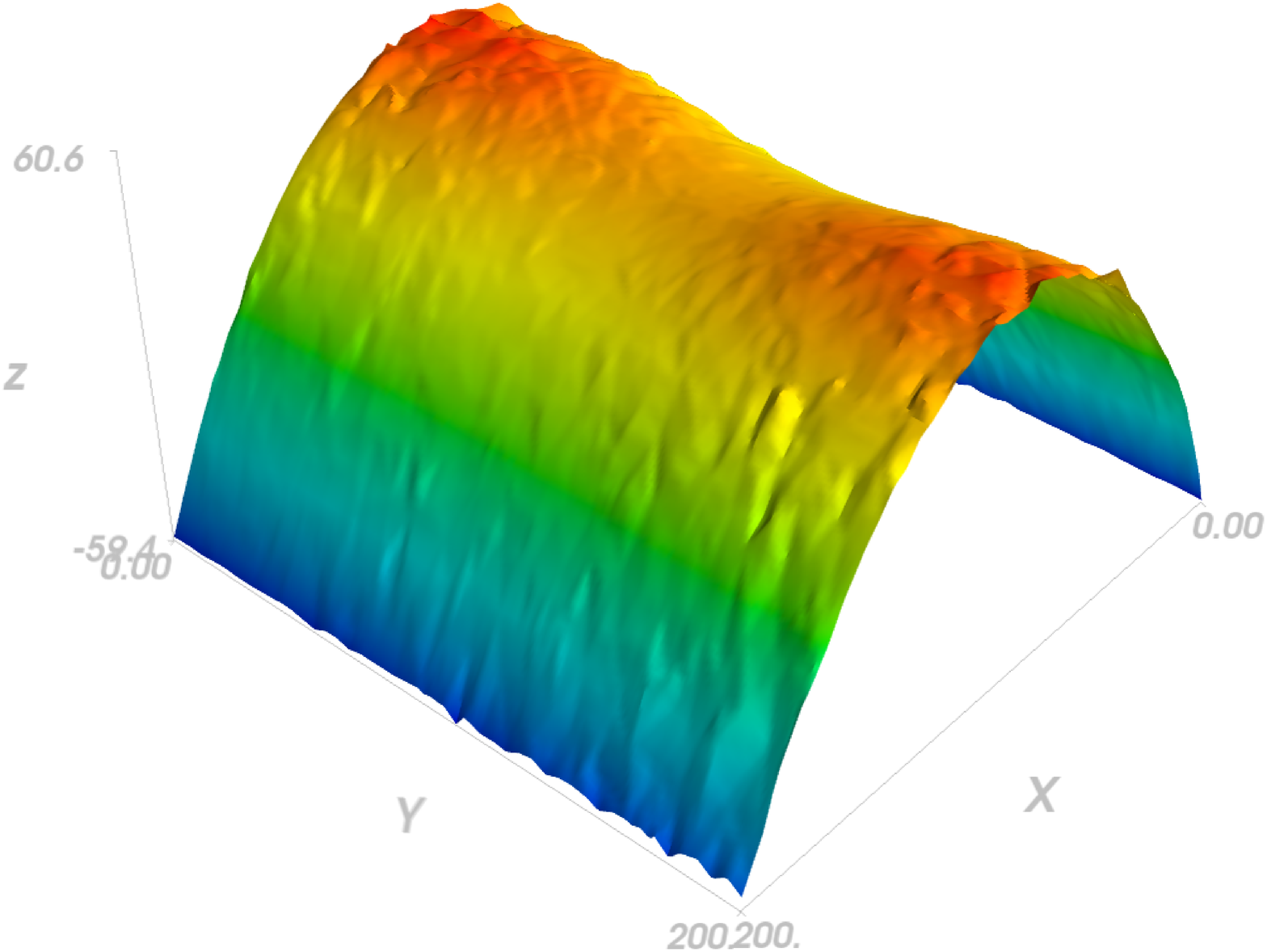}
\tiny{(g)}\includegraphics[width=5.5cm,height=4cm] {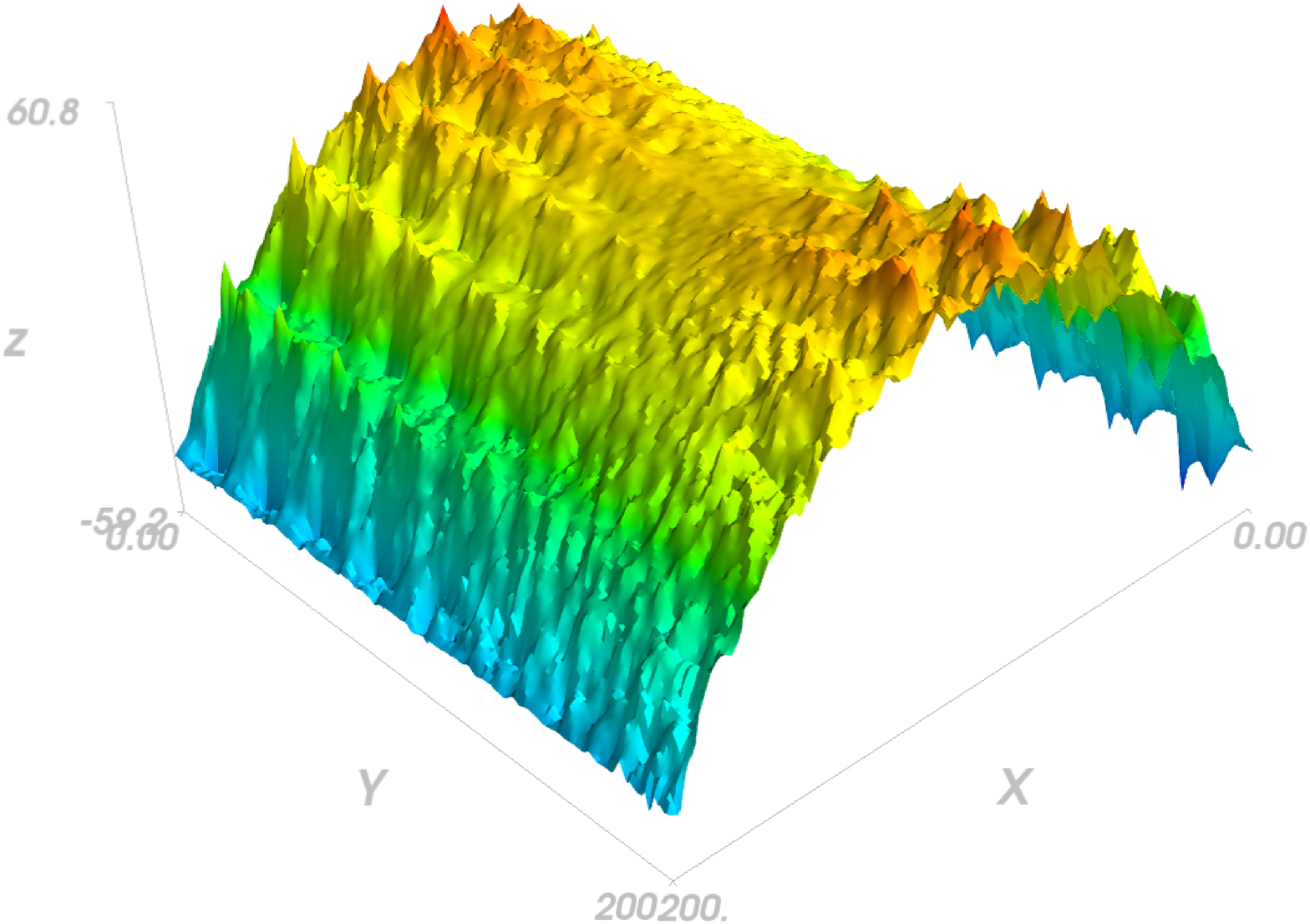}
\tiny{(h)}\includegraphics[width=5.5cm,height=4cm] {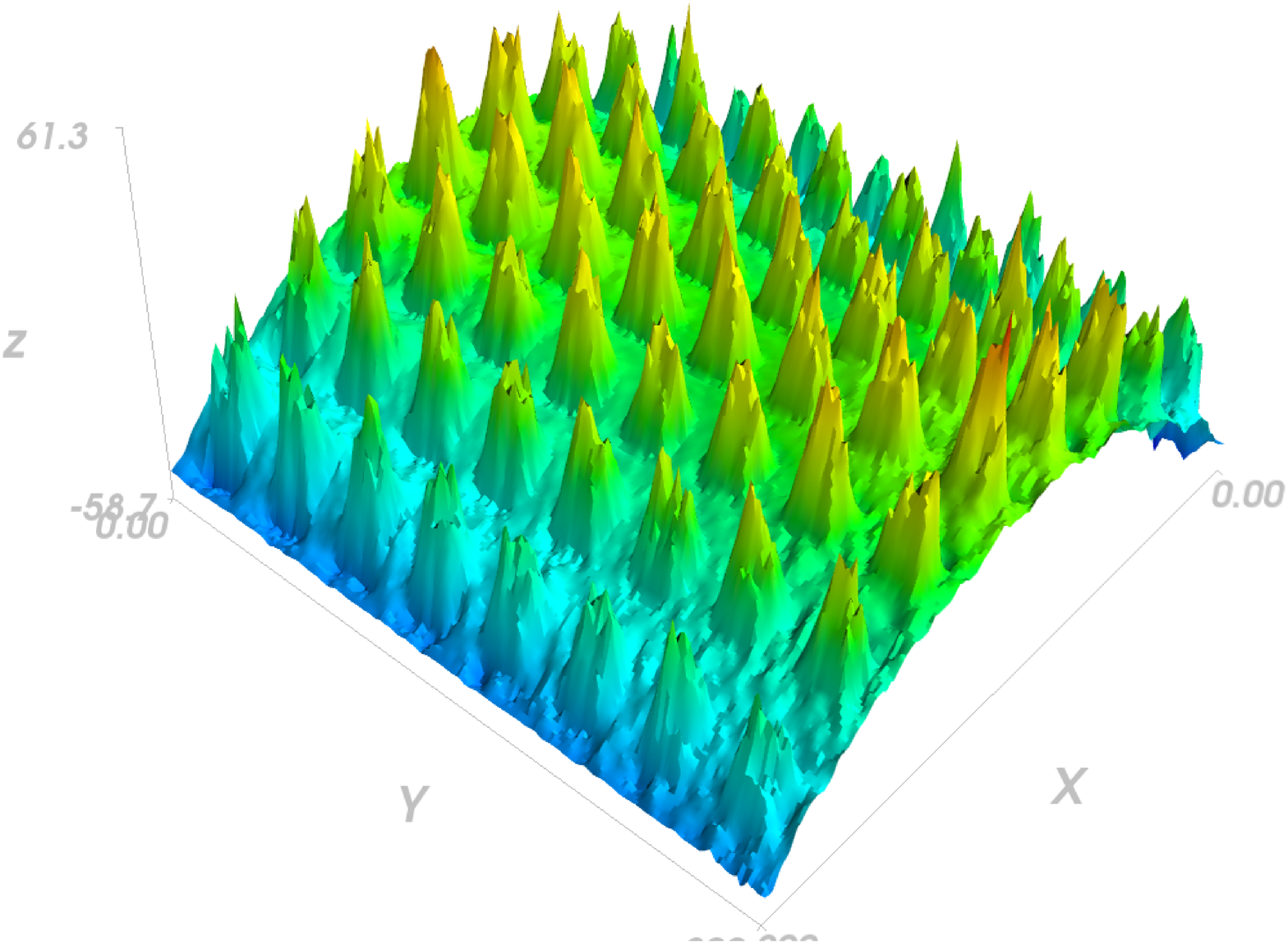}

\caption{Case 5.1.2: the density function and the second component of the electric field on the intersection $ x_3=0.45 $ and at $ T = 0.3 $.
(a) the reference solution $ \rho^\varepsilon(\mathbf{x},t)$ in a fine mesh;\, (b) the homogenized solution $
\rho^0(\mathbf{x},t)$ in a coarse mesh;\, (c) the first-order multiscale solution
$\rho_1^{\varepsilon}(\mathbf{x},t)$; \, (d)the second-order multiscale solution
$\rho_2^{\varepsilon}(\mathbf{x},t)$; \,(e) the reference solution $ \mathbf{E}^\varepsilon(\mathbf{x},t)$ in a fine mesh;\,
(f) the homogenized solution $\mathbf{E}^0(\mathbf{x},t) $ in a coarse mesh; \, (g) the first-order multiscale solution
$\mathbf{E}^{\varepsilon, (1)}(\mathbf{x},t) $;\, (h) the second-order multiscale solution
$\mathbf{E}^{\varepsilon, (2)}(\mathbf{x},t)$. The time step $ \Delta t=0.0025 $. }\label{fig5-5}
\end{center}
\end{figure}

\begin{figure}
\begin{center}
\tiny{(a)}\includegraphics[width=5.5cm,height=4cm] {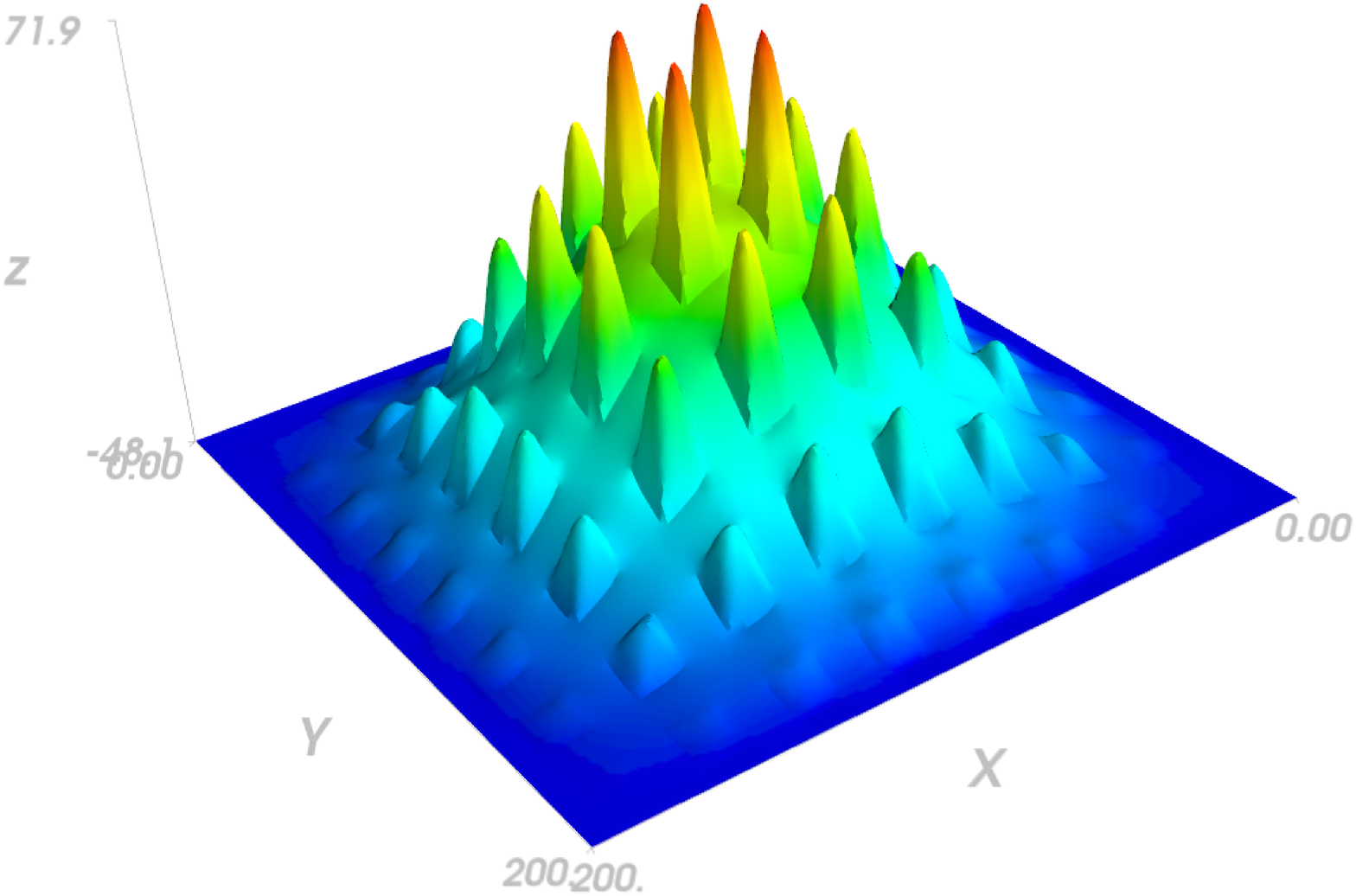}
\tiny{(b)}\includegraphics[width=5.5cm,height=4cm] {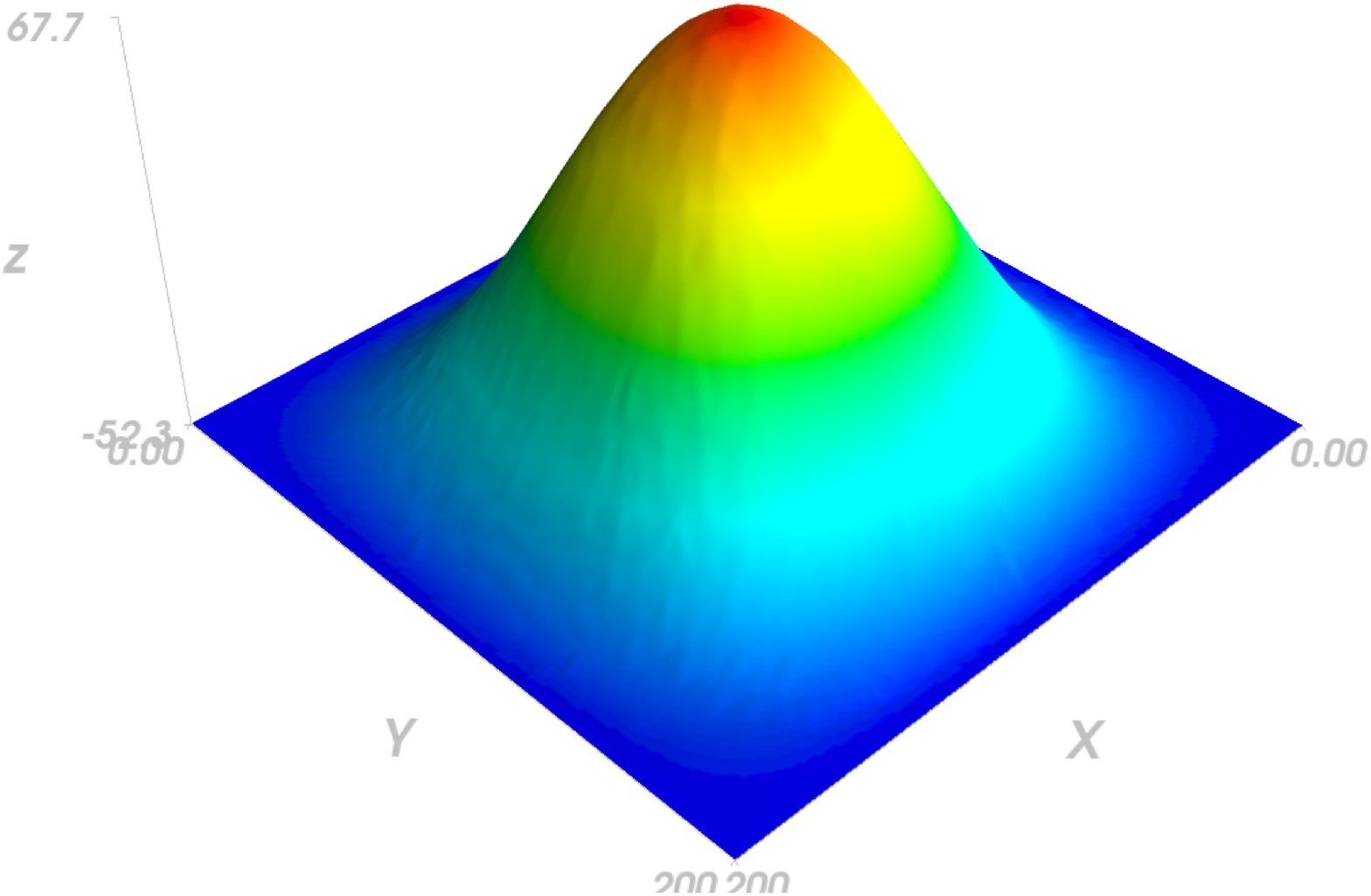}
\tiny{(c)}\includegraphics[width=5.5cm,height=4cm] {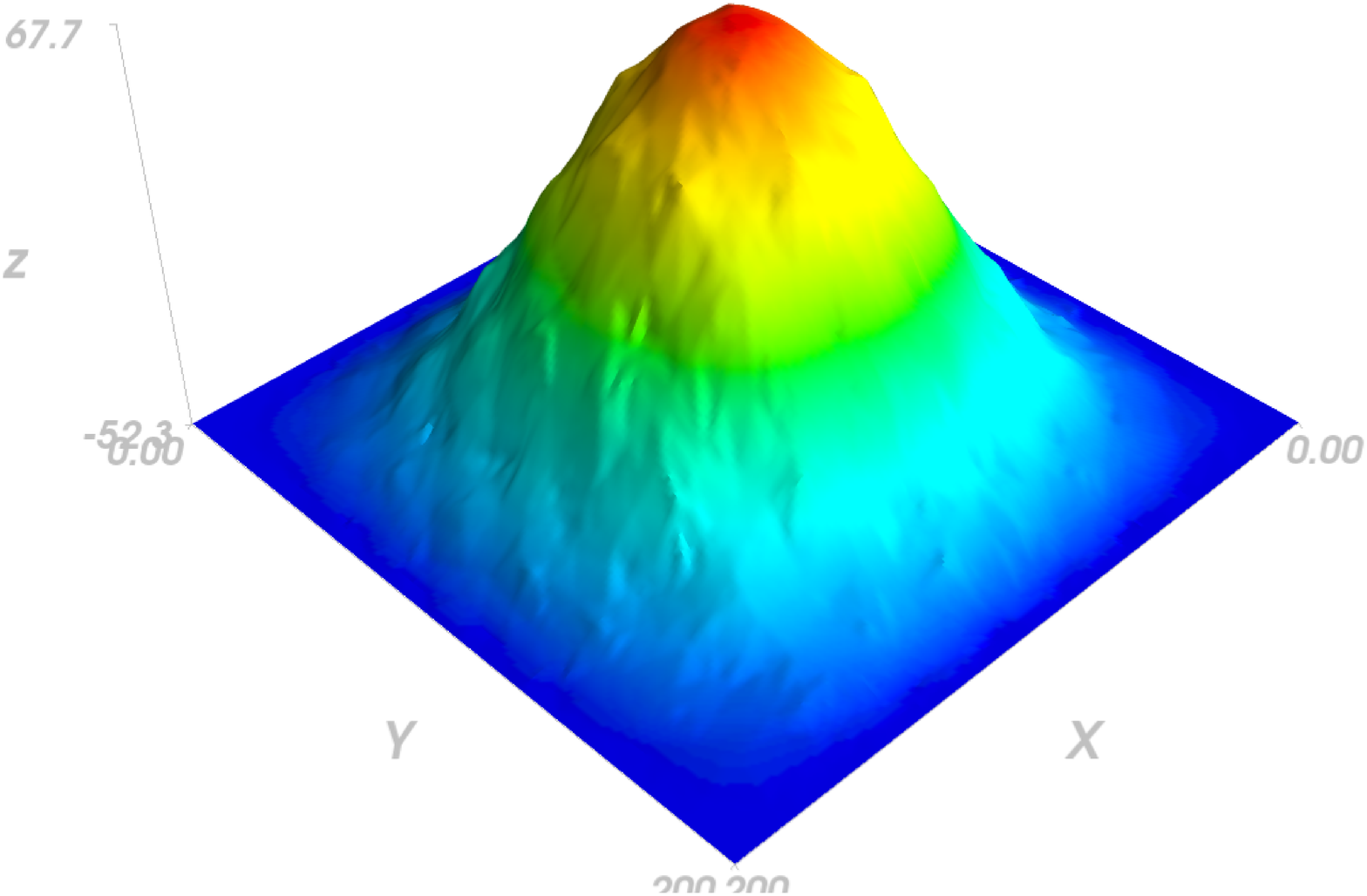}
\tiny{(d)}\includegraphics[width=5.5cm,height=4cm] {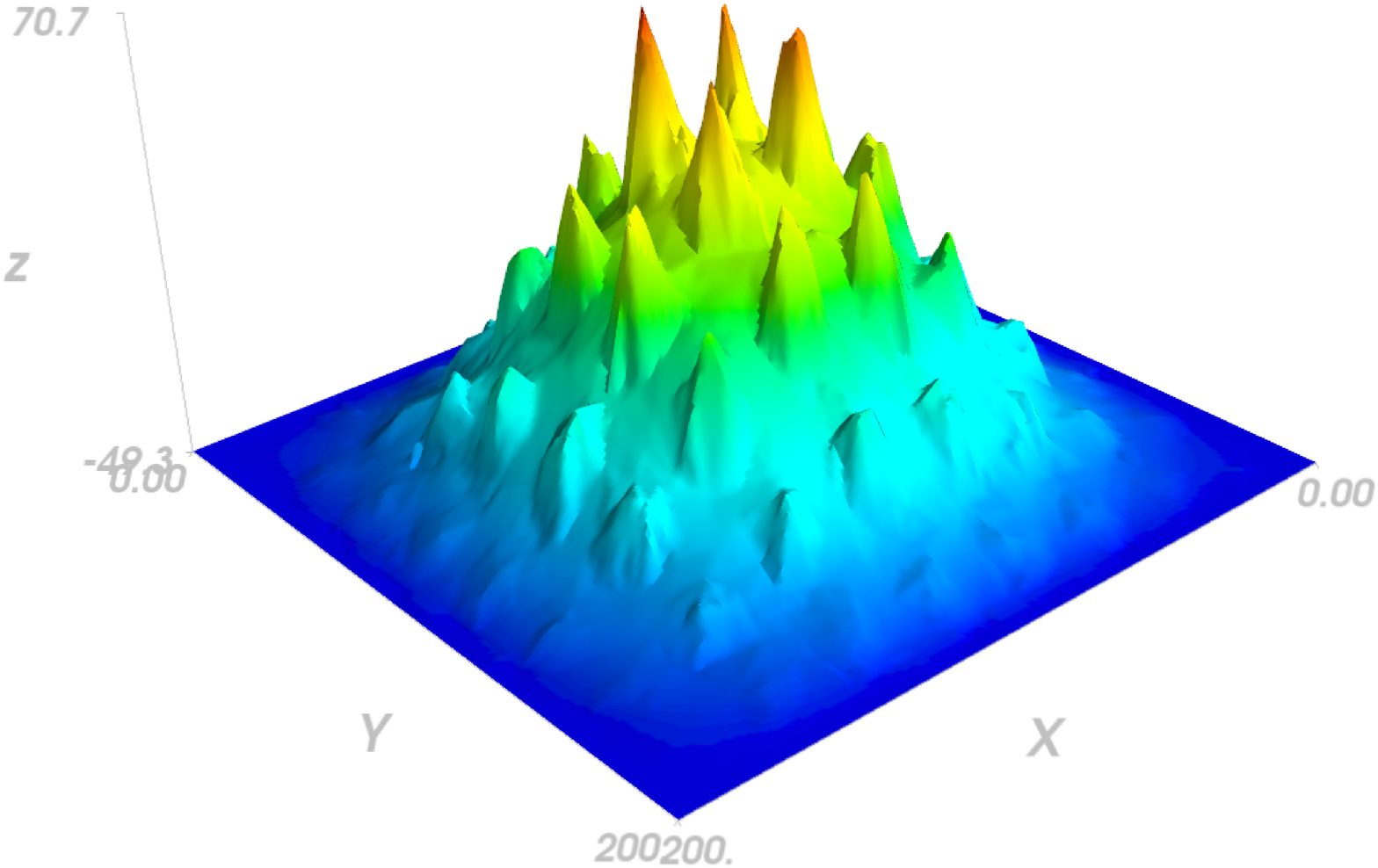}
\tiny{(e)}\includegraphics[width=5.5cm,height=4cm] {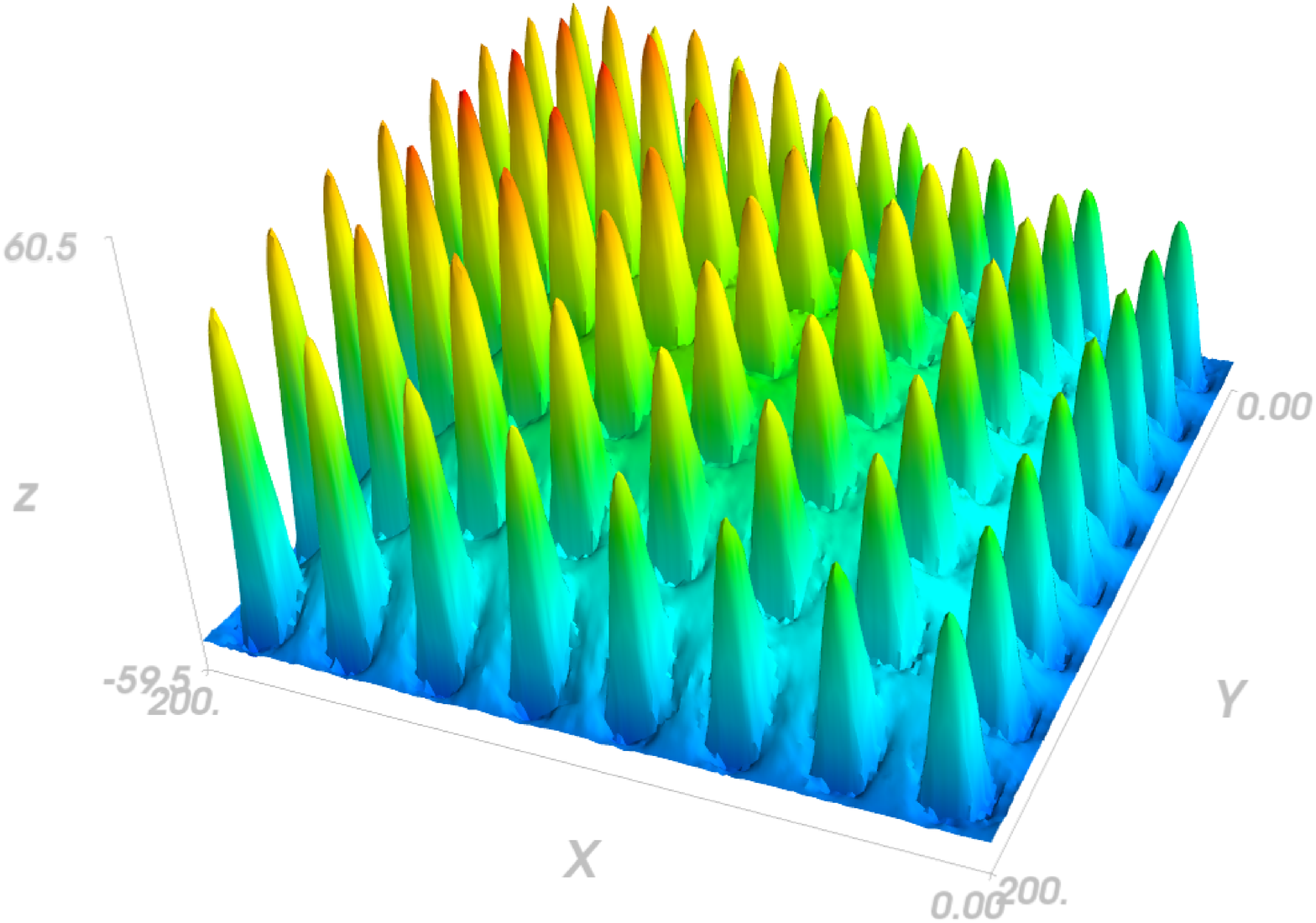}
\tiny{(f)}\includegraphics[width=5.5cm,height=4cm] {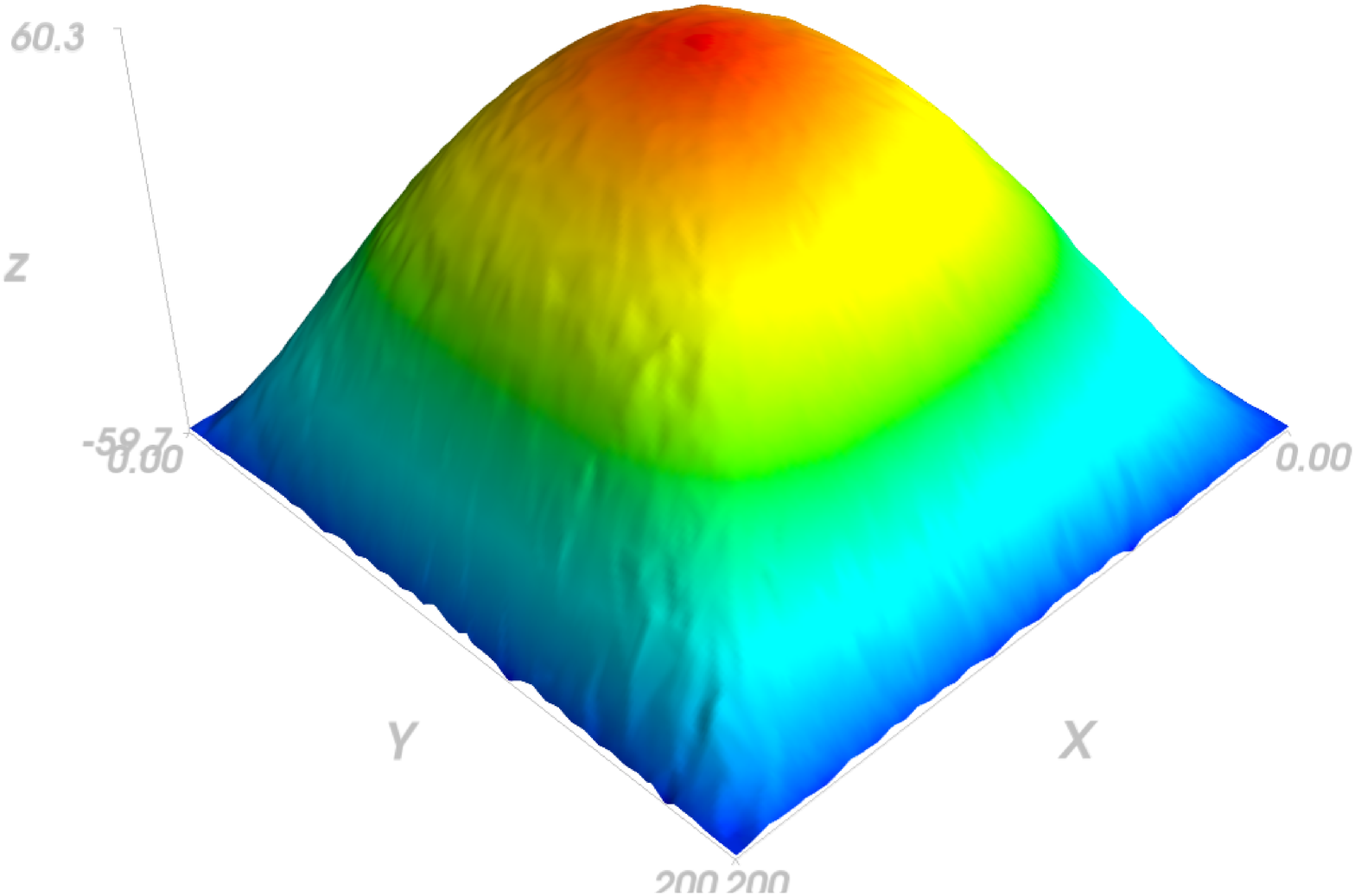}
\tiny{(g)}\includegraphics[width=5.5cm,height=4cm] {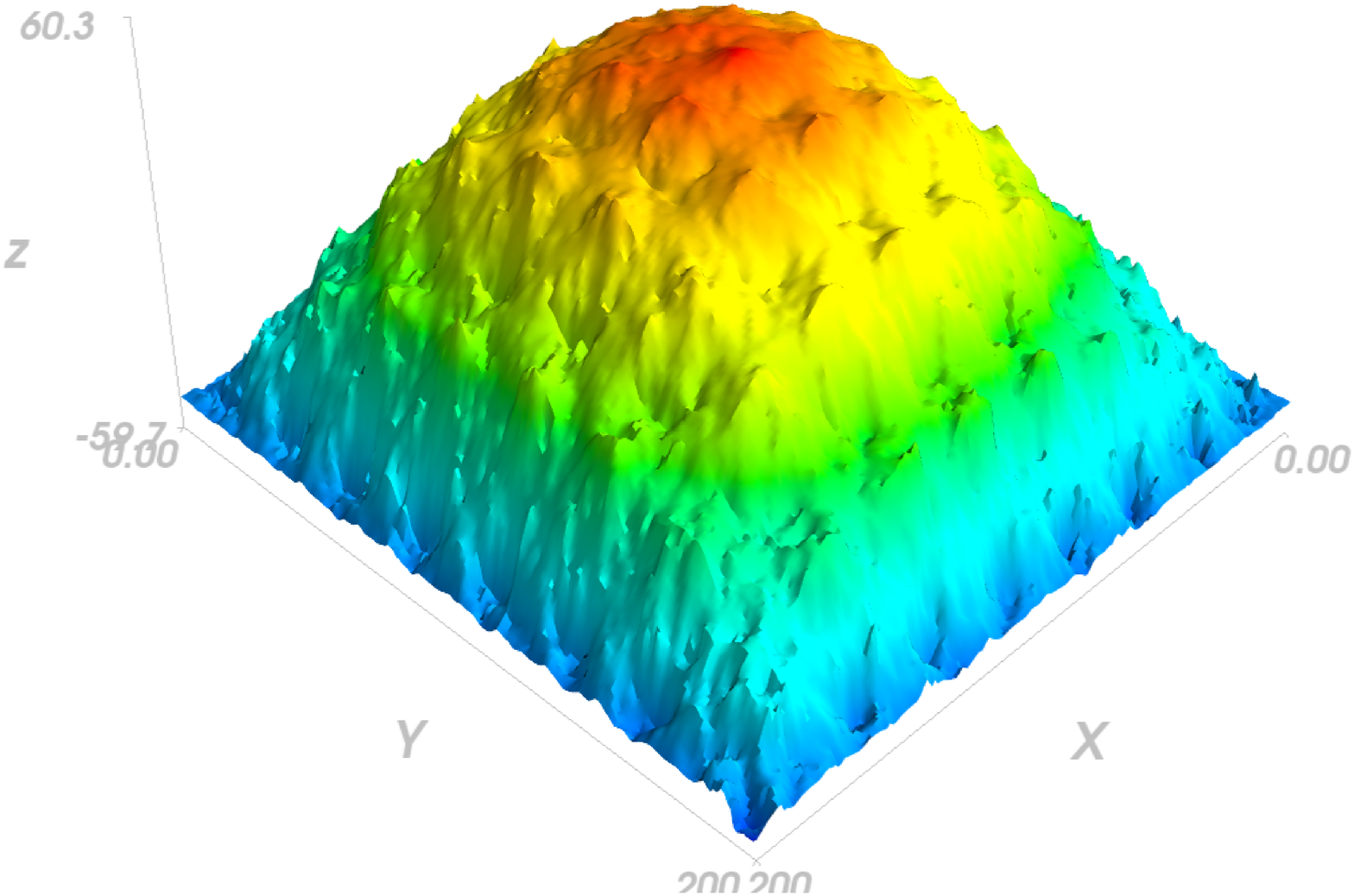}
\tiny{(h)}\includegraphics[width=5.5cm,height=4cm] {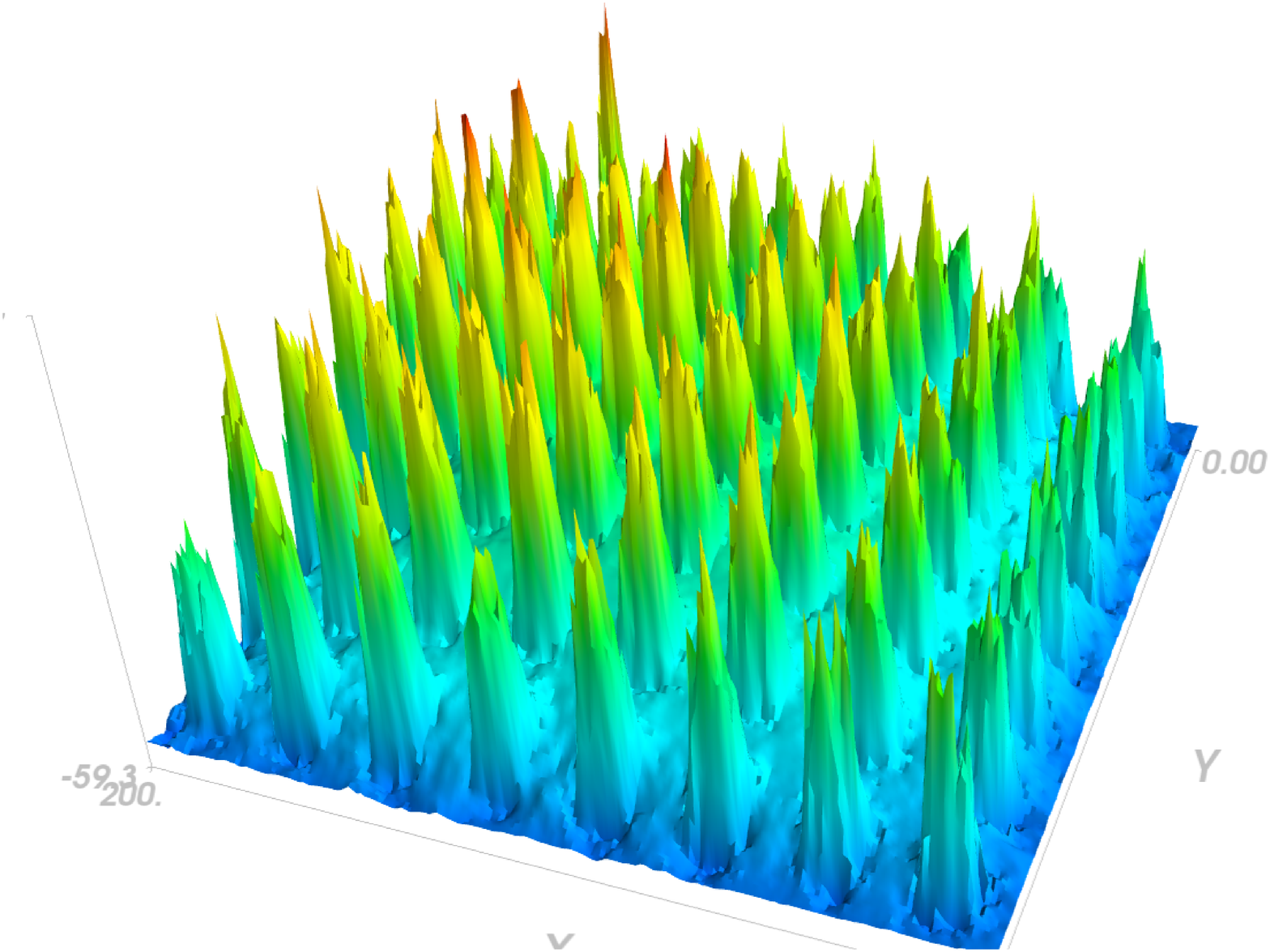}

\caption{Case 5.1.3: the density function and the third component of the electric field on the intersection $ x_3=0.45 $ and at $ T = 0.3 $.
(a) the reference solution $ \rho^\varepsilon(\mathbf{x},t)$ in a fine mesh;\, (b) the homogenized solution $
\rho^0(\mathbf{x},t)$ in a coarse mesh;\, (c) the first-order multiscale solution
$\rho_1^{\varepsilon}(\mathbf{x},t)$; \, (d)the second-order multiscale solution
$\rho_2^{\varepsilon}(\mathbf{x},t)$; \,(e) the reference solution $ \mathbf{E}^\varepsilon(\mathbf{x},t)$ in a fine mesh;\,
(f) the homogenized solution $\mathbf{E}^0(\mathbf{x},t) $ in a coarse mesh; \, (g) the first-order multiscale solution
$\mathbf{E}^{\varepsilon, (1)}(\mathbf{x},t) $;\, (h) the second-order multiscale solution
$\mathbf{E}^{\varepsilon, (2)}(\mathbf{x},t)$. The time step $ \Delta t=0.0025 $. }\label{fig5-6}
\end{center}
\end{figure}

\begin{exam}\label{exam5-2}
We consider the Maxwell-Schr\"{o}dinger system (\ref{eq:5-1})
with rapidly oscillating discontinuous coefficients.
Note that there is the exchange-correlation potential in (\ref{eq:5-1}) and we take
$ V_{xc}[\rho^\varepsilon]=-{(3 \rho^\varepsilon)}^{1/3} $.
Let
\begin{gather*}
\mbox{\bf Case 5.2.}\,\,\,
a_{ij}(\frac{\displaystyle \mathbf{x}}{\displaystyle \varepsilon})
=\left\{
\begin{array}{l}
 0.025\delta_{ij}, \quad  \rm{in\, each \,cube}\\
 \delta_{ij}, \quad  \rm{others}
\end{array}
\right. \quad \mu_{ij}(\frac{\displaystyle \mathbf{x}}{\displaystyle \varepsilon})
=\left\{
\begin{array}{l}
 \delta_{ij}, \quad  \rm{in\, each\, cube}\\
 0.01\delta_{ij}, \quad  \rm{others}.
\end{array}
\right.
\end{gather*}
\end{exam}
The others are the same as those in Example~\ref{exam5-1}.

The computational procedures in Example~\ref{exam5-2} are the same as those in
Example~\ref{exam5-1} except that we have to solve the Schr\"{o}dinger equation by  the self-consistent iterative method. The numerical results are listed in Table~\ref{tab5-5} and Table~\ref{tab5-6}, respectively.

\begin{table}[htb]
\caption{The computational results for the density function in Example~\ref{exam5-2}}\label{tab5-5}
\begin{center}
\begin{tabular}{|c|c|c|c|c|c|c|}
   \hline
    &$\frac{\|e_0\|_{0}}{\|\rho^{\varepsilon}\|_{0}}$
    & $\frac{\|e_1\|_{0}}{\|\rho^{\varepsilon}\|_{0}}$
    & $\frac{\|e_2\|_{0}}{\|\rho^{\varepsilon}\|_{0}}$
    & $\frac{\|e_0\|_{1}}{\|\rho^{\varepsilon}\|_{1}}$
    & $\frac{\|e_1\|_{1}}{\|\rho^{\varepsilon}\|_{1}}$
    & $\frac{\|e_2\|_{1}}{\|\rho^{\varepsilon}\|_{1}}$ \\
   \hline
    Case 5.2 &  0.058699 & 0.055766 & 0.011276 & 0.479818 & 0.451818 & 0.067282  \\
    \hline
  \end{tabular}
  \end{center}
\end{table}
\begin{table}[htb]
\caption{The computational results for the electric field in Example~\ref{exam5-2}}\label{tab5-6}
\begin{center}
\begin{tabular}{|c|c|c|c|c|c|c|}
   \hline
    &$\frac{\|\mathbf{e}_0\|_{(0)}}{\|\mathbf{E}^{\varepsilon}\|_{(0)}}$
    &$\frac{\|\mathbf{e}_1\|_{(0)}}{\|\mathbf{E}^{\varepsilon}\|_{(0)}}$
    &$\frac{\|\mathbf{e}_2\|_{(0)}}{\|\mathbf{E}^{\varepsilon}\|_{(0)}}$
    &$\frac{\|\mathbf{e}_0\|_{(1)}}{\|\mathbf{E}^{\varepsilon}\|_{(1)}}$
    &$\frac{\|\mathbf{e}_1\|_{(1)}}{\|\mathbf{E}^{\varepsilon}\|_{(1)}}$
    &$\frac{\|\mathbf{e}_2\|_{(1)}}{\|\mathbf{E}^{\varepsilon}\|_{(1)}}$\\
    \hline
    Case 5.2 &  0.195816 & 0.079736 &0.040340  & 1.147757 & 1.037087 & 0.814140  \\
    \hline
  \end{tabular}
  \end{center}
\end{table}

The computational results based on the homogenization method, the first-order and the second-order multiscale methods for the density function and the electric field on the line $ x_1=x_2=x_3 $ at $ T=0.4 $ in Case 5.2 are displayed in Fig.~\ref{fig5-7}. The evolution of relative error in $L^2$ norm of the multiscale solutions of the density function and the electric field
 is illustrated in Fig.~\ref{fig5-8}.

\begin{figure}
\begin{center}
\tiny{(a)}\includegraphics[width=6cm,height=6cm] {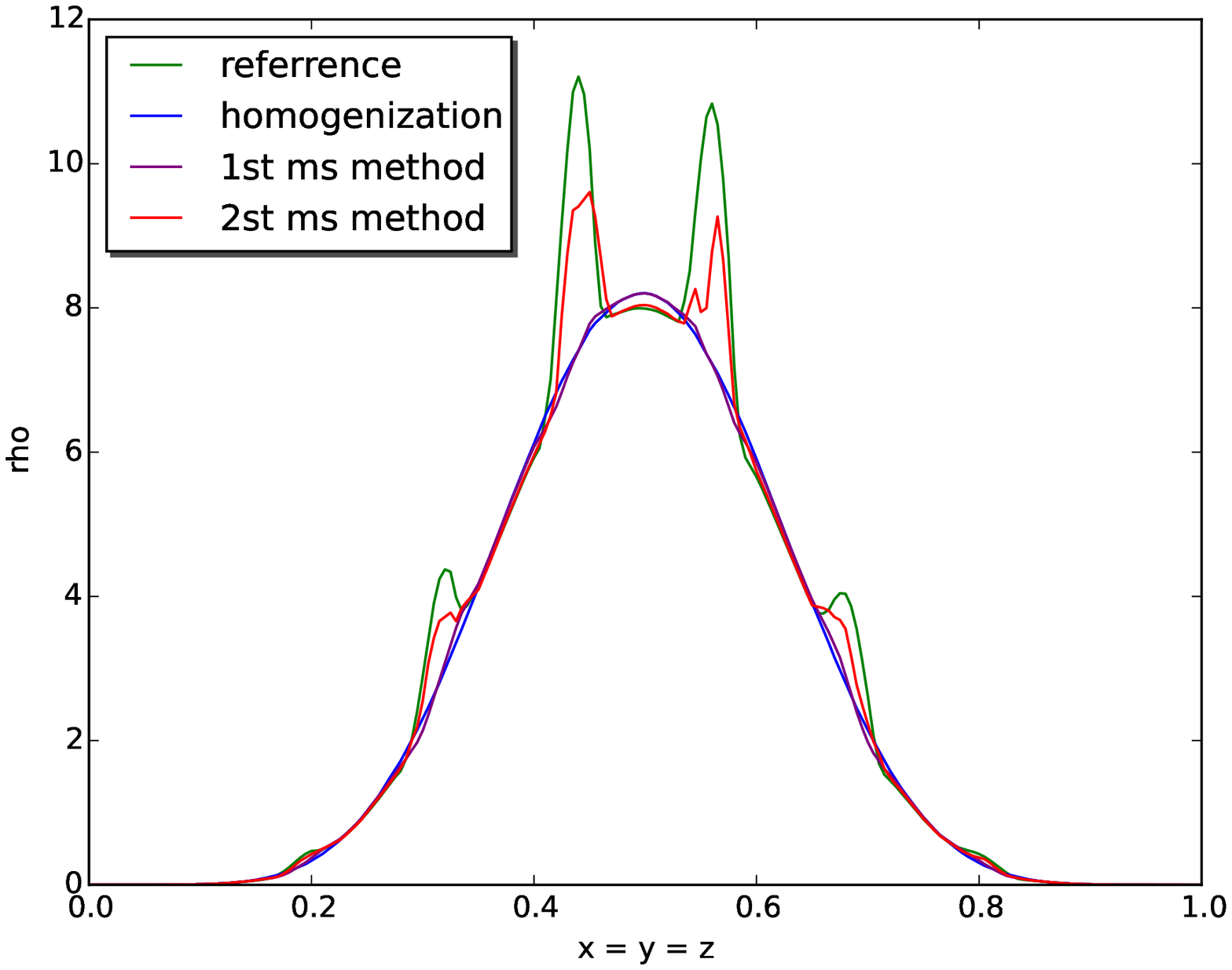}
\tiny{(b)}\includegraphics[width=6cm,height=6cm] {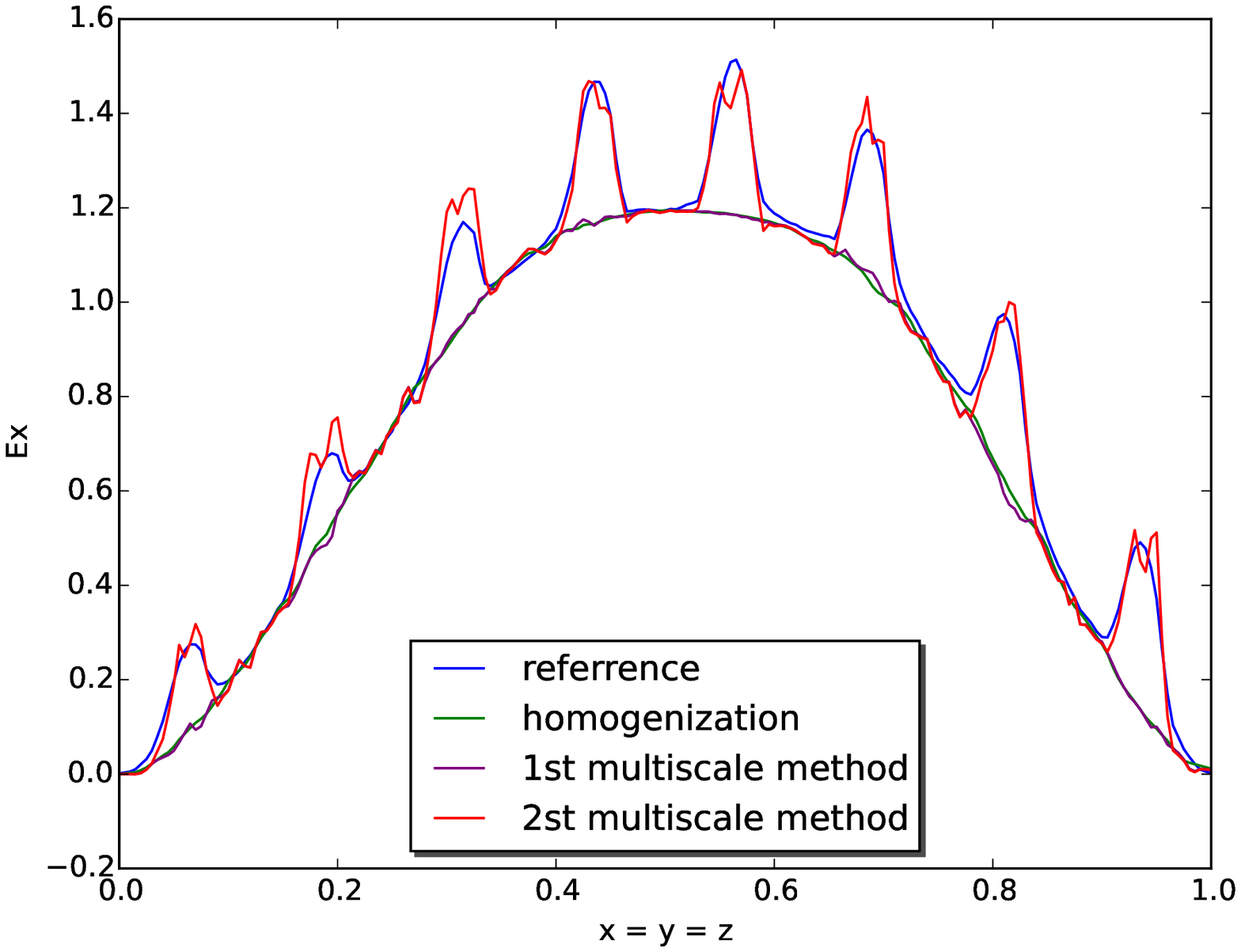}
\caption{ (a) The density function  on the line  $
x_1=x_2=x_3 $  at time $T=0.4$ in Case 5.2 ; \,(b) The third component of the  electric field  on the line  $
x_1=x_2=x_3 $  at time $T=0.4$ in Case 5.2 ; \,}\label{fig5-7}
\end{center}
\end{figure}

\begin{figure}
\begin{center}
\tiny{(a)}\includegraphics[width=6cm,height=6cm] {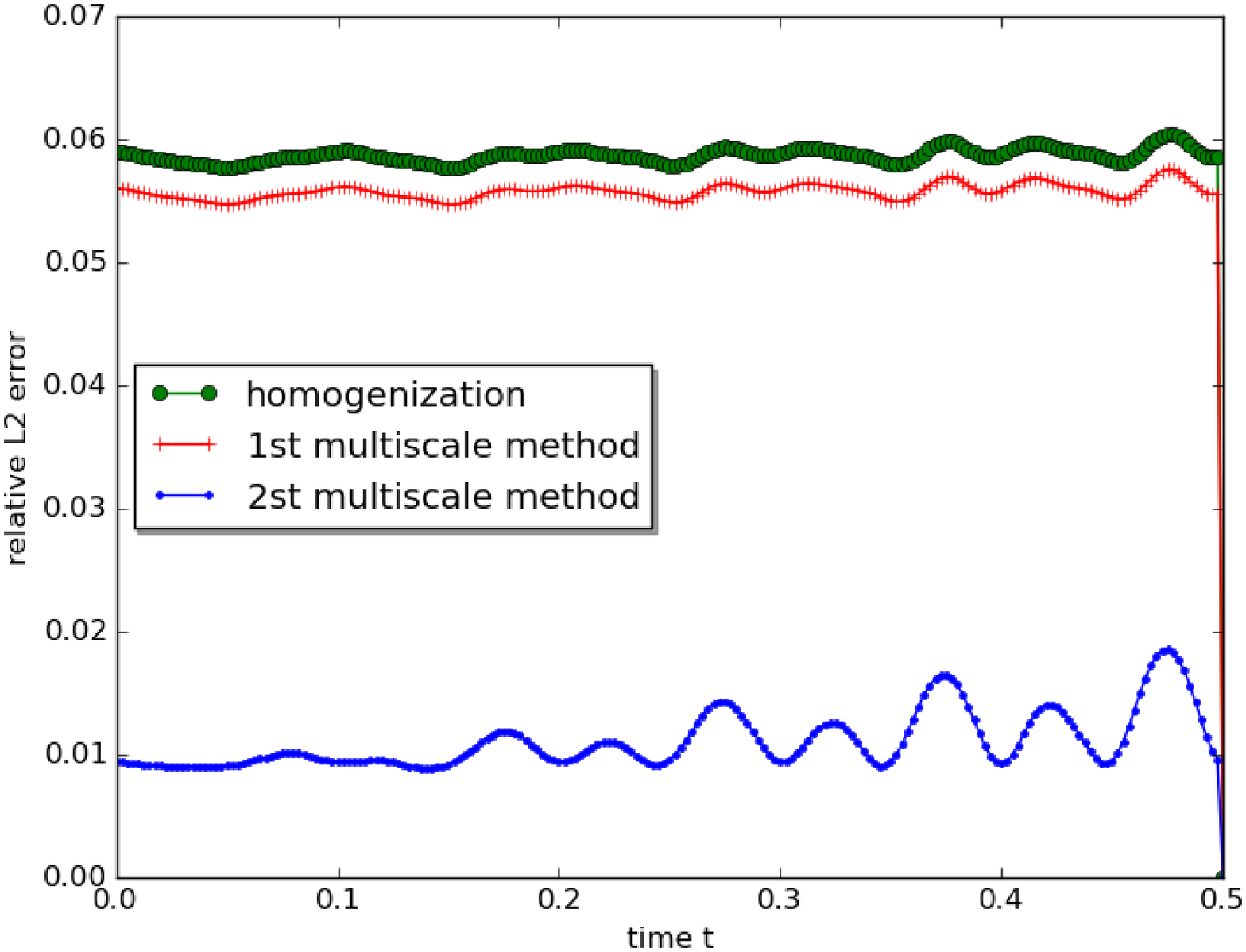}
\tiny{(b)}\includegraphics[width=6cm,height=6cm] {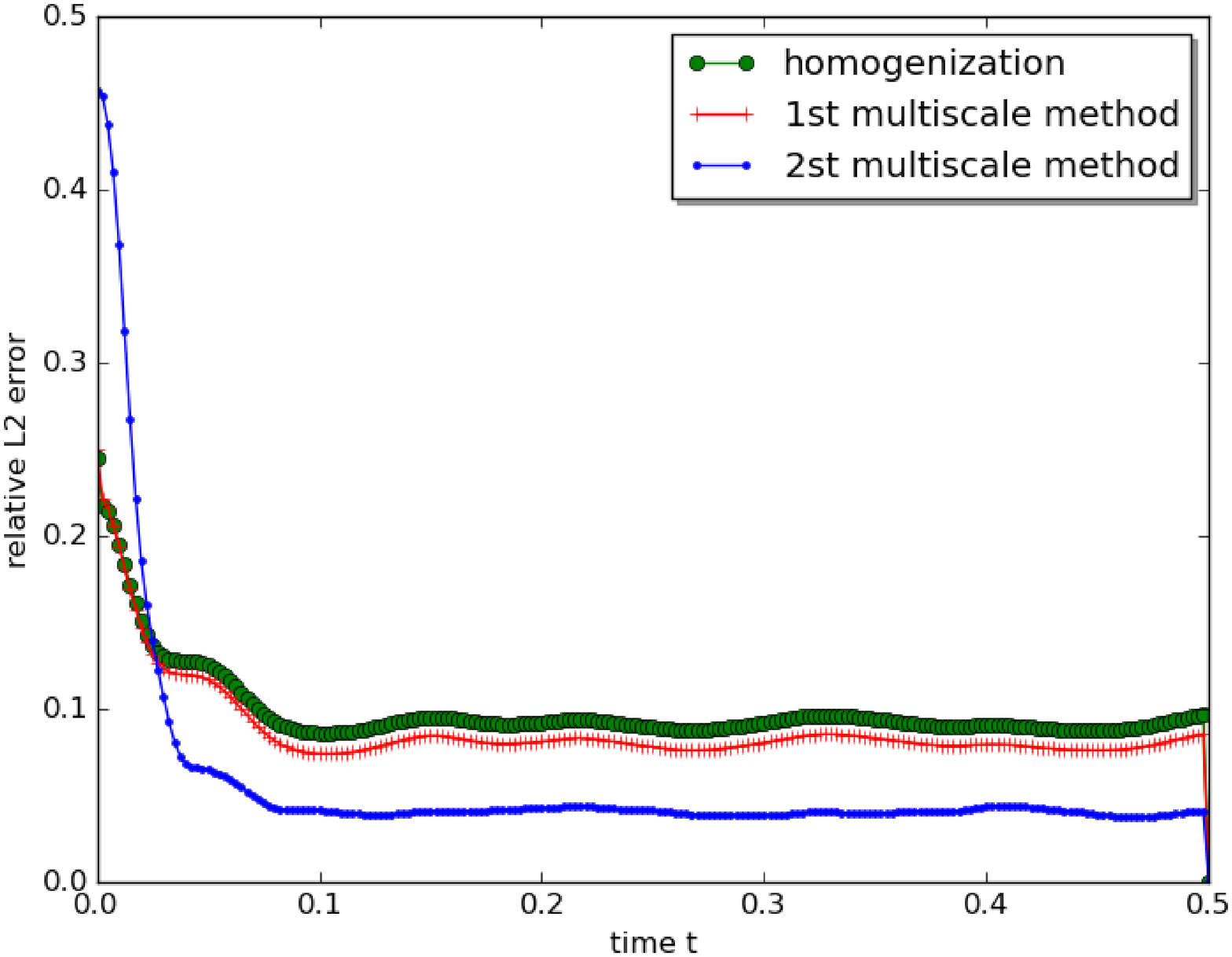}
\caption{ (a) The evolution of relative error in $L^2$ norm of
$\rho_1^{\varepsilon} $, $
\rho_2^{\varepsilon} $ and $
\rho^0 $ in Case 5.2. \,(b)The evolution of relative error in $L^2$ norm of
$\mathbf{E}^{\varepsilon, (1)} $, $
\mathbf{E}^{\varepsilon, (2)} $ and $
\mathbf{E}^0 $ in Case 5.2.}\label{fig5-8}
\end{center}
\end{figure}

\begin{rem}\label{rem5-1}
The comparison of computational costs listed in Tables~\ref{tab5-1} and \ref{tab5-2} clearly shows that the
multiscale asymptotic method provides a tremendous saving in computing resource,
in particular, for a sufficiently small periodic parameter $ \varepsilon>0 $.
\end{rem}

\begin{rem}\label{rem5-2}
From the results reported in Example~\ref{exam5-1} (also see Tables~\ref{tab5-3} and \ref{tab5-4}),
if there is a sharp difference between materials for the coefficient matrices
 $(a_{ij}(\frac{\displaystyle \mathbf{x}}{\displaystyle
\varepsilon})) $ and  $(\mu_{ij}(\frac{\displaystyle \mathbf{x}}{\displaystyle
\varepsilon})) $, the homogenization method and the first-order
multiscale method fail to provide satisfactory results. The
second-order multiscale approach, however, is capable of producing
accurate numerical solutions for the density function $
\rho^\varepsilon $ and the electric field $
\mathbf{E}^\varepsilon(\mathbf{x},t) $. The numerical results reported in Example~\ref{exam5-2}
clearly shows that the second-order corrector terms are crucial in
the proposed multiscale algorithm too.
\end{rem}

\begin{rem}\label{rem5-3}
The Maxwell's equations and the Schr\"{o}dinger equation can also be coupled through
the vector potential $\mathbf{A}$ and the scalar potential $\varphi$ instead of
$ \hat{V}(\mathbf{x},t)=-\mathbf{E}(\mathbf{x},t)\cdot \widehat{\bm{\zeta}}$, which is called ``length gauge".
Ohnuki et al. \cite{Oh} verified theoretically and numerically that
in the long-wavelength approximation, the length gauge is equivalent to the $ \mathbf{A}-\varphi$ method.
\end{rem}

\subsection*{Acknowledgments}

The authors wish to thank Prof.\ Linbo Zhang, Prof.\linebreak Zhiming Chen and Prof. Tao Cui for their help.
The use of the PHG computing platform for numerical simulations is
particularly acknowledged.



\begin{thebibliography}{99}

\bibitem{Ah-1}
{I. Ahmed and E. Li}, {\it Simulation of plasmonics nanodevices with coupled Maxwell
and schr\"{o}dinger equations using the FDTD method}, Advanced Electromagnetics, 1(1)(2012): pp.~76--83.

\bibitem{All-1}
{\sc G. Allaire and A. Piatnitski}, {\it Homogenization of the Schr\"{o}dinger equation and effective mass theorems},
Commun. Math. Phys., 258(1)(2005): pp.~1--22.

\bibitem{Bec}
R.Beck, R.Hiptmair, R.H.W.Hoppe and B.Wohlmuth, Residual based on
{\it a posteriori} estimates for eddy current computation, M2AN
Math. Modeling and Numer.Anal., (2000), 34: 159--182.

\bibitem{Benci}
{\sc V. Benci and D. Fortunato}, {\it Variational Methods in Nonlinear Field Equations:
Solitary Waves, Hylomorphic Solitons and Vertices}, Springer-Verlag, Berlin, 2014.

\bibitem{Ben}
{\sc A. Bensoussan, J.L. Lions and G. Papanicolaou}, {\it Asymptotic
Analysis for Periodic Structures}, North-Holland, Amsterdam, 1978.

\bibitem{Bos}
{\sc A. Bossavit, G. Griso, and B. Miara},
{\it Modelling of periodic electromagnetic structures bianisotropic materials with memory effects},
J. Math. Pures Appl., 84 (2005), pp. 819--850.

\bibitem{Cao-1}
{\sc L.Q. Cao and J.Z. Cui}, {\it Asymptotic expansions and numerical algorithms of eigenvalues and
eigenfunctions of the Dirichlet problem for second order elliptic equations in perforated domains},
Numer. Math., 96(3)(2004): pp.~525--581.

\bibitem{Cao-2}
{\sc L.Q. Cao, J.L. Luo and C.Y. Wang,}
{\it Multiscale analysis and numerical algorithm for the Schr\"{o}dinger equations in heterogeneous media,}
Appl. Math. Comput., 217(2010): pp.~3955--3973.

\bibitem{Cao-3}
{\sc L.Q. Cao, Y. Zhang, W. Allegretto, and Y.P. Lin},
{\it Multiscale asymptotic method for Maxwell's equations in composite materials},
SIAM J. Numer. Anal.,47(6) (2010), pp. 4257--4289.

\bibitem{Cao-4}
{\sc L.Q. Cao, K.Q. Li, J.L. Luo and Y.S. Wong},
{\it A multiscale approach and a hybrid FE-FDTD algorithm for 3D time-dependent Maxwell's equations in composite materials},
Multiscale Model. \& Simul., 13(4) (2015), pp. 1446-1477.

\bibitem{Chen-1}
{\sc H. Chen, X. Gong and A. Zhou A}, {\it
Numerical approximations of a nonlinear eigenvalue problem and applications to a density functional model},
Math. Methods in the Appl. Sci., 33(14)(2010): pp.~1723--1742.

\bibitem{Chen-2}
{\sc H. Chen, L. He and A. Zhou,} {\it Finite element approximations of nonlinear eigenvalue problems
in quantum physics},  Comput. Methods in Appl. Mech. and Engng., 200(21)(2011): pp.~1846--1865.

\bibitem{Chen}
{\sc Z.Chen, L.Wang and W.Zheng,} {\it An adaptive multilevel method for
time-harmonic Maxwell equations with sigularities,} SIAM J.Sci.
Comput., 29(2007): pp.~118--138.

\bibitem{Cio}
{\sc D. Cioranescu and P. Donato,} {\it Introduction to Homogenization},
Oxford University Press, New York, 1999.

\bibitem{Cohen}
{\sc T.C. Cohen, R.J. Dupont and C. Crynberg}, {\it Atom-Photon Interactions: Basic
Processes and Applications}, Wiley-VCH, Weinheim, 2004.

\bibitem{Cos-1}
{\sc M. Costabel, M. Dauge and S. Nicaise,} {\it Sigularities of Maxwell
interface problems,} M2AN Math. Model Numer. Anal., 33(3)(1999):pp.~
627--649.

\bibitem{Cos-2}
{\sc M. Costabel and M. Dauge,} {\it  Singularities of electromagnetic fields in
polyhedral domains,} Arch. Rational Mech. Anal., 151(2000): pp.~221--276.

\bibitem{Del}
{\sc C. Delerue and M. Lannoo,} {\it Nanostructures, Theory and Modeling},
Springer-Verlag, Berlin, 2004.

\bibitem{Duv}
{\sc G. Duvaut and  J.L. Lions},
{\it Inequalities in Mechanics and Physics},
Springer-Verlag, New York, 1976.

\bibitem{Ez}
{\sc Z.F. Ezawa}, {\it Quantum Hall Effects: Recent Theoretical and Experimental
Developments}, Third Edition, World Scientific, Singapore, 2013.

\bibitem{Gil}
{\sc D. Gilbarg and N. Trudinger,} {\it Elliptic Partial
Differential Equations of Second Order}, 2nd ed., Springer-Verlag,
Berlin, New York, 1983.

\bibitem{Gin}
{\sc J. Ginibre and G. Velo}, {\it Long range scattering and modified wave
operators for the Maxwell-Schr\"{o}dinger system I. The case of vanishing
asymptotic magnetic field}, Commun. Math. Phys., 236(2003): pp.~395--448.

\bibitem{Guo}
{\sc Y. Guo, K. Nakamitsu and W. Strauss}, {\it Global finite-energy solutions
of the Maxwell-Schr\"{o}dinger system}, Commun. Math. Phys., 170(1995): pp.~181-196.

\bibitem{Ha}
{\sc T.P. Hamilton and P. Pulay}, {\it Direct inversion in the iterative subspace (DIIS)
optimization of open-shell, excitedstate, and small multiconfiguration SCF wave functions},
J. Chem. Phys., 84(10)(1986): pp.~5728--5734.

\bibitem{Har}
{\sc P. Harrison,} {\it Quantum Wells, Wires, and Dots}, John Wiley \&
Sons, Ltd. Press, 2000.

\bibitem{Hu}
{\sc X. Hu and W. Yang}, {\it  Accelerating self-consistent field convergence with the
augmented Roothaan Hall energy function},  J. Chem. Phys., 132(5)(2010): pp.~054109-.

\bibitem{Ji}
{\sc V.V. Jikov, S.M. Kozlov and O.A. Oleinik,} {\it Homogenization of
Differential Operators and Integral Functionals}, Springer-Verlag,
Berlin, 1994.

\bibitem{Kai-1}
H.-Chr. Kaiser and J. Gehberg, {\it About a stationary Schrodinger-Poisson system
with Kohn-Sham potential in a bounded two- or three-dimensional domain,}
Nonlinear Analysis, 41(2000), pp.~33--72.

\bibitem{LiY-1}
{\sc Y.Y. Li and M. Vogelius}, {\it Gradient estimates for solutions to divergence form elliptic equations
with discontinuous coefficients},  Arch. Rational Mech. and Anal., 153(2)(2000): pp.~91--151.

\bibitem{Lop}
{\sc K. Lopata and D. Neuhauser}, {\it Multiscale Maxwell-Schr\"{o}dinger modeling:
A split field finite-difference time-domain approach to molecular nanopolaritonics},
J. Chem. Phys., 130(10)(2009): pp.~104707-.

\bibitem{Lor-1}
{\sc E. Lorin, S. Chelkowski and A.D. Bandrauk}, {\it A numerical Maxwell-Schr\"{o}dinger model for
intense laser-matter interaction and propagation}, Comput. Phys. Comm., 177(12)(2007): pp.~908--932.

\bibitem{Lor-2}
{\sc E. Lorin and A.D. Bandrauk}, {\it Efficient and accurate numerical modeling of
a micro-macro nonlinear optics model for intense and short laser pulses},
J. Comput. Sci., 3(3)(2012): pp.~159--168.


\bibitem{Mar}
{\sc R.M. Martin,} {\it Electronic Structure, Basic Theory and Practical Methods},
Cambridge University Press,  New York, 2004.

\bibitem{Monk}
{\sc P. Monk,} {\it Finite Element Methods for Maxwell's Equations },
Clarendon press, Oxford, 2003.

\bibitem{Nak}
{\sc K. Nakamitsu and M. Tsutsumi}, {\it The Cauchy problem for the coupled Maxwell-Schr\'{o}dinger equations},
J. Math. Phys., 27(1)(1986): pp.~211--216.

\bibitem{Nak-1}
{\sc M. Nakamura and T. Wada,} {\it Local well-posedness for the
Maxwell-Schr\"{o}dinger equation},  Mathematische Annalen, 332(3)(2005): pp.~565--604.

\bibitem{Nak-2}
{\sc M. Nakamura and T. Wada}, {\it Global existence and uniqueness of solutions to
the Maxwell-Schr\"{o}dinger equations}, Commun. Math. Phys., 276(2)(2007): pp.~315--339.

\bibitem{Ne-1}
{\sc J.C. N$\acute{e}$d$\acute{e}$lec}, {\it Mixed finite elements in $R^{3}$,}
Numer.Math., 35(1980): pp.~ 315--341.


\bibitem{Oh}
{\sc S. Ohnuki et al.}, {\it Coupled analysis of Maxwell-schr\"{o}dinger equations by using
the length gauge: harmonic model of a nanoplate subjected to a 2D electromagnetic field},
Intern. J. of Numer. Model.: Electronic Networks, Devices and Fields, 26(6)(2013): pp.~533--544.

\bibitem{Ole}
{\sc O.A. Oleinik, A.S. Shamaev and G.A. Yosifian,} {\it Mathematical
Problems in Elasticity and Homogenization}, North-Holland,
Amsterdam, 1992.


\bibitem{Pi}
{\sc L. Pierantoni, D. Mencarelli and T. Rozzi}, {\it A new 3-D transmission line
matrix scheme for the combined Schr\"{o}dinger-Maxwell problem in the electronic/electromagnetic
characterization of nanodevices}, IEEE Transactions on Microwave Theory and Techniques,
56(3)(2008): pp.~654--.

\bibitem{Pul-1}
{\sc P. Pulay},  {\it Convergence acceleration of iterative sequences: The case of SCF iteration},
 Chem. Phys. Letters, 73(2)(1980): pp.~393--398.

\bibitem{Rei}
{\sc S.M. Reimann and M. Manninen,} {\it Electronic structure of quantum dots},  Rev. of Modern Phys.,
74(4)(2002): pp.~1283--.

\bibitem{San}
{\sc E. Sanchez-Palencia}, {\it Non-Homogeneous Media and Vibration
Theory}, Springer-Verlag, Berlin, Heidelberg, New York, 1980.

\bibitem{Sa}
{\sc S.A. Sato and K. Yabana},  {\it Maxwell+ TDDFT multi-scale simulation for laser-matter
interactions}, Adv. Simul. Sci. Engng., 1(2014): pp.~98--110.

\bibitem{Sch}
{\sc W. Sch\"{a}fer and M. Wegener}, {\it Semiconductor Optics and Transport
Phenomena}, Springer-Verlag, Berlin, 2011.

\bibitem{Shi}
{\sc A. Shimomura}, {\it Modified wave operator for Maxwell-Schr\"{o}dinger equations
in three space dimensions}, Ann. Henri Poincar\'{e}, 4(2003): pp.~661--683.


\bibitem{Tsu-1}
{\sc Y. Tsutsumi}, {\it Global existence and asymptotic behavior of solutions for
the Maxwell-Schr\"{o}dinger equations in three space dimensions},
Commun. Math. Phys., 151(3)(1993): pp.~543--576.

\bibitem{Tur}
{\sc P. Turati and Y. Hao}, {\it A FDTD solution to the Maxwell-Schr\"{o}dinger coupled model at
the microwave range}, Electromagnetics in Advanced Applications (ICEAA),
2012 International Conference on. IEEE, 2012: pp.~363--366.

\bibitem{Ul}
{\sc C.A. Ullrich,} {\it Semiconductor Nanostructures}, Lect. Notes Phys. 706: pp.~271--285, 2006.

\bibitem{Wa}
{\sc T. Wada,} {\it Smoothing effects for Schr\"{o}dinger equations with electro-magnetic
potentials and applications to the Maxwell-Schr\"{o}dinger equations},
J. Functional Analysis, 263(2012): pp.~1-24.

\bibitem{Wel-1}
{\sc N. Wellander},
{\it Homogenization of the Maxwell equations. Case\/ {\rm I.} Linear theory},
Appl. Math., 46 (2001), pp.~29--51.

\bibitem{Wel-2}
{\sc N. Wellander},
{\it Homogenization of the Maxwell equations. Case\/ {\rm II.} Nonlinear conductivity},
Appl. Math., 47 (2002), pp.~255--283.

\bibitem{Wel-3}
{\sc N. Wellander and G. Kristensson},
{\it Homogenization of the Maxwell equations at fixed frequency},
SIAM J. Appl. Math., 64 (2003), pp.~170--195.

\bibitem{Yaba-1}
{\sc K. Yabana, T. Sugiyama, Y. Shinohara, et al.}, {\it
Time-dependent density functional theory for strong electromagnetic fields in crystalline solids},
Phys. Rev. B, 85(4)(2012): pp.~045134--.


\bibitem{Zhang-L}
{\sc L.Zhang, L.Q. Cao and J.L. Luo,} {\it Multiscale analysis and computation for a stationary
Schr\"{o}inger-Poisson system in heterogeneous nanostructures},  Multiscale Model. \& Simul.,
12(4)(2014): pp.~1561--1591.

\bibitem{Zhang-1}
{\sc Y. Zhang Y, L.Q. Cao and Y.S. Wong}, {\it  Multiscale computations for 3D time-dependent
Maxwell's equations in composite materials}, SIAM J. Sci. Comput., 32(5)(2010): pp.~2560--2583.

\bibitem{Zhang-2}
{\sc Y. Zhang, L.Q. Cao, W. Allegretto, and Y.P. Lin},
{\it Multiscale numerical algorithm for 3-D Maxwell's equations with memory effects in composite
materials}, Intern. J. Numer. Anal. and Modeling, Ser. B, 1(2010), pp.~2560--2583.


\end{thebibliography}
\end{document}